\numberwithin{equation}{section}
\title{Convergence Rates of First and Higher Order Dynamics for Solving Linear Ill-posed Problems}
\author{Radu Boţ$^1$\\{\footnotesize\href{mailto:radu.bot@univie.ac.at}{radu.bot@univie.ac.at}}
\and Guozhi Dong$^{2,3}$\\{\footnotesize\href{mailto:guozhi.dong@hu-berlin.de}{guozhi.dong@hu-berlin.de}}
\and Peter Elbau$^1$\\{\footnotesize\href{mailto:peter.elbau@univie.ac.at}{peter.elbau@univie.ac.at}}
\and Otmar Scherzer$^{1,4}$\\{\footnotesize\href{mailto:otmar.scherzer@univie.ac.at}{otmar.scherzer@univie.ac.at}}}
\date{}
\newtheorem{lemma}{Lemma}[section]
\newaliascnt{proposition}{lemma}
\newtheorem{proposition}[proposition]{Proposition}
\newaliascnt{corollary}{lemma}
\newtheorem{corollary}[corollary]{Corollary}
\newaliascnt{theorem}{lemma}
\newtheorem{theorem}[theorem]{Theorem}
\newaliascnt{definition}{lemma}
\newtheorem{definition}[definition]{Definition}
\newaliascnt{assumption}{lemma}
\newaliascnt{notation}{lemma}
\newaliascnt{example}{lemma}
\newtheorem{example}[example]{Example}
\theoremstyle{nonumberplain}
\newtheorem{remark}{Remark}
\newtheorem{proof}{Proof}
\titleformat{\section}[block]{\large\sc\filcenter}{\thesection.}{0.5ex}{}[]
\titleformat{\subsection}[runin]{\bf}{\thesubsection.}{0.5ex}{}[.]
\newcommand{\N}{\mathds{N}}
\newcommand{\Z}{\mathds{Z}}
\newcommand{\R}{\mathds{R}}
\newcommand{\abs}[1]{\left|#1\right|}
\newcommand{\norm}[1]{\left\|#1\right\|}
\newcommand{\set}[1]{\left\{#1\right\}}
\newcommand{\inner}[2]{\left<#1,#2\right>}
\newcommand{\e}{\mathrm e}
\renewcommand{\d}{\,\mathrm d}
\newcommand{\xdag}{x^\dagger}
\newcommand{\xa}{x_\alpha}
\newcommand{\ra}{r_\alpha}
\newcommand{\Ra}{R_\alpha}
\newcommand{\Xa}{X_\alpha}
\newcommand{\tra}{\tilde{r}_\alpha}
\newcommand{\Tra}{\tilde{R}_\alpha}
\newcommand{\phiH}{\varphi^{\mathrm H}}
\newcommand{\hatphiH}{\hat\varphi^{\mathrm H}}
\newcommand{\phiL}{\varphi^{\mathrm L}}
\newcommand{\ointerval}[1]{\left(0,#1\right)}
\newcommand{\cinterval}[1]{\left[0,#1\right]}
\newcommand{\cointerval}[1]{\left[0,#1\right)}
\begin{document}
\maketitle
\thispagestyle{empty}
\hspace*{8em}
\parbox[t]{17em}{\footnotesize
\hspace*{-1ex}$^1$Faculty of Mathematics\\
University of Vienna\\
Oskar-Morgenstern-Platz 1\\
A-1090 Vienna, Austria}
\hfil
\parbox[t]{17em}{\footnotesize
\hspace*{-1ex}$^2$Institute for Mathematics\\
Humboldt University of Berlin\\
Unter den Linden 6\\
10099 Berlin, Germany}\\[1em]
\hspace*{8em}
\parbox[t]{17em}{\footnotesize
\hspace*{-1ex}$^3$Weierstrass Institute for Applied\\
\hspace*{1em}Analysis and Stochastics (WIAS)\\
Mohrenstraße 39\\
10117 Berlin, Germany}
\hfil
\parbox[t]{17em}{\footnotesize
\hspace*{-1ex}$^4$Johann Radon Institute for Computational\\
\hspace*{1em}and Applied Mathematics (RICAM)\\
Altenbergerstraße 69\\
A-4040 Linz, Austria}\\[1em]
\hspace*{8em}

\begin{abstract}
Recently, there has been a great interest in analysing dynamical flows, where the stationary limit is 
the minimiser of a convex energy. Particular flows of great interest have been continuous limits 
of Nesterov's algorithm and the Fast Iterative Shrinkage-Thresholding Algorithm (FISTA), respectively.

In this paper we approach the solutions of linear ill-posed problems by dynamical flows. 
Because the squared norm of the residual of a linear operator equation is a convex functional, the
theoretical results from convex analysis for energy minimising flows are applicable. However, in the restricted situation of this paper they can often be significantly improved. Moreover, since we show that the proposed flows for minimising the norm of the residual of a linear operator equation are optimal regularisation methods and that they provide optimal convergence rates for the 
regularised solutions, the given rates can be considered the benchmarks for further studies in convex analysis. 
\\
{\bf Keywords: Linear ill-posed problems, regularisation theory, dynamical regularisation, optimal convergence rates, Showalter's method, heavy ball method, vanishing viscosity flow, spectral analysis} 
\end{abstract} 
{\bf MSC 2010:  47A52, 47N10, 65J20}  

\section{Introduction}\label{seIntro}
We consider the problem of solving a linear operator equation
\begin{equation} \label{eq:linear}
 L x=y,
\end{equation}
where $L\colon\mathcal X \rightarrow \mathcal Y$ is a bounded linear operator 
between (infinite dimensional) real Hilbert spaces $\mathcal X$ and $\mathcal Y$. 
If the range of $L$ is not closed, \autoref{eq:linear} is 
ill-posed, see \cite{Gro84}, in the sense that small perturbations 
in the data $y$ can cause non-solvability of the \autoref{eq:linear} 
or large perturbations of the corresponding solution of \autoref{eq:linear} by perturbed 
right hand side. 
These undesirable effects are prevented by regularisation.

In this particular paper we consider dynamical regularisation 
methods for solving \autoref{eq:linear}.
That is, we approximate the minimum norm solution $\xdag$ of \autoref{eq:linear} by the solution $\xi$ of a dynamical system of the form
\begin{equation}
\label{eqODE}
\begin{aligned}
\xi^{(N)} (t) + \sum_{k=1}^{N-1} a_k(t)\xi^{(k)}(t) &= - L^*L \xi(t) + L^*\tilde y &&\text{for all } t \in \ointerval{\infty}, \\
\xi^{(k)}(0) &= 0 &&\text{for all } k=0,\ldots,N-1,
\end{aligned}
\end{equation}
at an appropriate time, where $N \in \N$, $a_k\colon(0,\infty) \to \R$, $k=1,\ldots,N-1$, are continuous functions, and $\tilde y$ is a perturbation of $y$. The stopping time is in practice often chosen via a standard discrepancy principle, see \cite[Chapter 3.3]{Gro84}. We are now interested under which conditions the regularised solution $\xi(t)$ can be guaranteed to converge to the solution $\xdag$ as $t\to\infty$ and how fast this convergence happens.

Studying first the case of exact data $\tilde y=y$, it turns out that the convergence rate, that is, the decay of $\|\xi(t)-\xdag\|^2$ in the limit $t\to\infty$, can be uniquely characterised by the spectral decomposition of the minimum norm solution $\xdag$ with respect to the operator $L^*L$, which allows us to get optimal convergence rates as a function of the ``regularity'' of the source $\xdag$. This regularity is usually described by so-called source conditions, the most common ones being of the form $\xdag\in\mathcal R((L^*L)^\frac\mu2)$ for some $\mu>0$; we refer to \cite[Chapter~2.2]{Gro84} and \cite[Chapter~3.2]{EngHanNeu96} for an introduction to the use of those source conditions for obtaining convergence rates. Moreover, these convergence rates for exact data are seen to be in a one-to-one correspondence to certain convergence rates for perturbed data as the perturbation $\|\tilde y-y\|^2$ goes to zero.

Outside the regularisation community source conditions might appear technical because they involve the operator $L$. However, it was demonstrated that for differential and integral operators $L$, these conditions very well coincide with smoothness conditions in Sobolev spaces. See for instance \cite{HanSch01}, where the analogy of smoothness and source conditions has been explained for the problem of numerical differentiation. For this analogy these conditions are also often termed smoothness conditions.

In particular, we will apply the general theory of this equivalent characterisation of convergence rates to the following three, well-studied examples:
\begin{enumerate}
\item
Showalter's method (also known as the gradient flow method), see \cite{Sho67,ShoBen70}, which corresponds to the case $N=1$ in \autoref{eqODE}:
\begin{equation}
\label{eq:Showalter}
\begin{aligned}
\xi'(t) &= -L^*L \xi(t)+ L^*\tilde y \text{  for all } t \in \ointerval{\infty}, \\
\xi(0) &= 0,
\end{aligned}
\end{equation}
see \autoref{tbShowalter} for an overview of the available convergence rates results;
\item
the heavy ball method, introduced in \cite{Pol64}, corresponding to $N=2$ with a constant function $a_1(t)=b>0$ in \autoref{eqODE}:
\begin{align}
\partial_{t t}\xi(t;\tilde y) + b\partial_t\xi(t;\tilde y) &= - L^*L \xi(t;\tilde y) + L^*\tilde y\text{ for all } t \in \ointerval{\infty}, \nonumber \\
\partial_t\xi(0;\tilde y) &= 0, \label{eqODESecond} \\
\xi(0;\tilde y) &= 0,\nonumber
\end{align}
where known convergence rates results are collected in \autoref{tbHeavyBall};
\item
the vanishing viscosity method, see \cite{SuBoyCan16}, which is the case of $N=2$ with $a_1(t)=\frac b t$ for some $b>0$ in \autoref{eqODE}:
\begin{align}
\partial_{t t}\xi(t;\tilde y) + \frac b t\partial_t\xi(t;\tilde y) &= - L^*L \xi(t;\tilde y) + L^*\tilde y\text{ for all } t \in \ointerval{\infty}, \nonumber \\
\partial_t\xi(0;\tilde y) &= 0, \label{eqODEVanishingViscosity} \\
\xi(0;\tilde y) &= 0.\nonumber
\end{align}
Some convergence rates from the literature are listed in \autoref{tbViscosity}.
\end{enumerate}

\begin{table}[htb]
{\footnotesize\def\arraystretch{1.5}
\hfil
\begin{tabular}{|p{2.4cm}|p{2cm}p{3.2cm}|p{2cm}p{3.4cm}|}\hline
Source Condition & $\norm{\xi(t)-\xdag}^2$ & & $\norm{L\xi(t)-y}^2$ & \\
\hline\hline
$\|L^\dag\|<\infty$ & $\mathcal O(\e^{-\|L^\dag\|^{-2}t})$ & \cite[Theorem 1]{Sho67} & $\mathcal O(\e^{-\|L^\dag\|^{-2}t})$ & \\ \hline
$\xdag\in\mathcal R((L^*L)^{\frac\mu2})$ & $\mathcal O(t^{-\mu})$ & \cite[Theorem 1]{ShoBen70} ($\mu=1$),\newline \autoref{thShoCR} & $\mathcal O(t^{-\mu-1})$ & \autoref{thShoCR} \\ \hline
$\xdag\in\mathcal N(L)^\perp$ & $o(1)$ & \cite[Theorem 1]{ShoBen70},\newline \autoref{thShoReg}\newline with \autoref{thRegErrorMonotone} & $\mathcal O(t^{-1})$\par\smallskip$o(t^{-1})$ & \cite[Theorem 1]{ShoBen70}\par\smallskip\autoref{thShoReg}\newline with \autoref{thResidualCR} \\ \hline
\end{tabular}}
\caption{Convergence rates for Showalter's method. (To compare the results from \cite{ShoBen70}, we remark that the condition $y\in\mathcal R(LL^*)$ given therein is equivalent to $\xdag\in\mathcal R((L^*L)^{\frac12})$, which can be directly seen, for example, from the characterisation of the range of a dual operator given in \cite[Lemma 8.31]{SchGraGroHalLen09}.)\\
We also remark that in the well-posed case $\|L^\dag\|<\infty$, the rates for $\|\xi(t)-\xdag\|^2$ and for $\|L\xi(t)-y\|^2$ are always the same, since $\|L^\dag\|^{-2}\|\xi(t)-\xdag\|^2\le\|L\xi(t)-y\|^2\le\|L\|^2\|\xi(t)-\xdag\|^2$.}\label{tbShowalter}
\end{table}

\begin{table}[htb]
{\footnotesize\def\arraystretch{1.5}
\hfil
\begin{tabular}{|p{2.4cm}|p{2.4cm}p{2.5cm}|p{2.4cm}p{3.5cm}|}\hline
Source Condition & $\norm{\xi(t)-\xdag}^2$ & & $\norm{L\xi(t)-y}^2$ & \\
\hline\hline
$\|L^\dag\|<\infty$ & $\mathcal O(\e^{\varepsilon t-\beta(\|L^\dag\|)\frac{bt}2})$ & \cite[Theorem 9.(5)]{Pol64} & $\mathcal O(\e^{\varepsilon t-\beta(\|L^\dag\|)\frac{bt}2})$ & \\ \hline
$\xdag\in\mathcal R((L^*L)^{\frac\mu2})$ & $\mathcal O(t^{-\mu})$ & \cite[Theorem 5.1]{ZhaHof20}, \newline \autoref{thSecOrdConclusion} & $\mathcal O(t^{-\mu-1})$ & \autoref{thSecOrdConclusion} \\ \hline
$\xdag\in\mathcal N(L)^\perp$ & $o(1)$ & \autoref{thSecOrdReg}\newline with \autoref{thSecOrdCompatibility}\newline and \autoref{thRegErrorMonotone} & $o(t^{-1})$ & \cite[Lemma 3.2]{ZhaHof20} ($b\ge\|L\|$),\newline\autoref{thSecOrdReg}\newline with \autoref{thSecOrdCompatibility}\newline and \autoref{thResidualCR} \\ \hline
\end{tabular}}
\caption{Convergence rates for the heavy ball method. Here, $\varepsilon>0$ denotes an arbitrarily small parameter and we have set $\beta(\|L^\dag\|)=1-(1-\frac4{b^2\|L^\dag\|^2})^{\frac12}$ for $\|L^\dag\|\ge\frac2b$ and $\beta(\|L^\dag\|)=1$ for~$\|L^\dag\|<\frac2b$.}\label{tbHeavyBall}
\end{table}

\begin{table}[htb]
{\footnotesize\def\arraystretch{1.5}
\hfil
\begin{tabular}{|p{2.4cm}|p{2cm}|p{1.7cm}p{2.4cm}|p{2.4cm}p{2.5cm}|}\hline
Source Condition & Parameters & $\norm{\xi(t)-\xdag}^2$ & & $\norm{L\xi(t)-y}^2$ & \\
\hline\hline
$\|L^\dag\|<\infty$ & $b>3$
	& $o(t^{-2})$ &
	& $o(t^{-2})$ & \cite[Theorem 4.16]{ApiAujDos18} \\
 & 
	& $\mathcal O(t^{-\frac{2b}3})$ & \cite[Theorem 3.4]{AttChbPeyRed18}
	& $\mathcal O(t^{-\frac{2b}3})$ & \cite[Theorem 3.4]{AttChbPeyRed18} \\
\hline
$\|L^\dag\|<\infty$ & $b>2$
	& $\mathcal O(t^{-2})$ &
	& $\mathcal O(t^{-2})$ & \cite[Theorem 7]{SuBoyCan16} \\
 & 
	& $\mathcal O(t^{-b})$ & 
	& $\mathcal O(t^{-b})$ & \cite[Theorem 4.2]{AujDosRon19} \\ \hline
$\|L^\dag\|<\infty$ & $0<b<3$
	& $\mathcal O(t^{-\frac{2b}3})$ & 
	& $\mathcal O(t^{-\frac{2b}3})$ & \cite[Theorem 4.19]{ApiAujDos18} \\
\hline\hline
$\xdag\in\mathcal R((L^*L)^{\frac\mu2})$ & $0<\mu<\frac b2$
	& $\mathcal O(t^{-2\mu})$ & \autoref{thSingularConclusion} 
	& & \\ \hline
$\xdag\in\mathcal R((L^*L)^{\frac\mu2})$ & $0<\mu<\frac b2-1$
	& & 
	& $\mathcal O(t^{-2\mu-2})$ & \autoref{thSingularConclusion} \\
\hline\hline
$\xdag\in\mathcal N(L)^\perp$ & $b\ge3$
	& &
	& $\mathcal O(t^{-2})$ & \cite[Theorem 2.7]{AttChbPeyRed18} \\ \hline
$\xdag\in\mathcal N(L)^\perp$ & $b>0$
	& $o(1)$ & \autoref{thSingReg}\newline with \autoref{thSingularCompatibility}\newline and \autoref{thRegErrorMonotone}
	& $\mathcal O(t^{-b+\varepsilon})+o(t^{-2})$ & \autoref{thSingReg}\newline with \autoref{thSingularCompatibility}\newline and \autoref{thCRImageSimple}\newline and \autoref{thResidualCR} \\ \hline
\end{tabular}}
\caption{Convergence rates for the vanishing viscosity flow. As before, $\varepsilon>0$ denotes an arbitrarily small parameter.}\label{tbViscosity}
\end{table}

Especially the vanishing viscosity method has recently been heavily investigated, see \cite{SuBoyCan16,BotCse16,AttChbPeyRed18,AttChbRia18}, for example, as it shows a faster convergence compared to the other two methods, and it was demonstrated to be a time continuous formulation of Nesterov's algorithm, see \cite{Nes83}, providing an explanation of the rapid convergence of this algorithm. Consequently, it was not only studied in the form of \autoref{eqODEVanishingViscosity}, but more generally with the right hand side (which in \autoref{eqODEVanishingViscosity} is the negative gradient of $\mathcal J_0(x)=\frac12\|L x-y\|^2$) replaced by the negative gradient of an arbitrary convex and differentiable functional $\mathcal J$. But, since our theory relies on spectral analysis, we limit our discussion to the quadratic functional $\mathcal J_0$.

In terms of convergence rates, however, the discussions for general functionals $\mathcal J$ are often limited to the estimation of the convergence of $\mathcal J(\xi(t))-\min_{x\in\mathcal X}\mathcal J(x)$, which for $\mathcal J=\mathcal J_0$ is given by $\frac12\|L\xi(t)-y\|^2$. In the well-posed case where the operator $L$ has a bounded pseudoinverse $L^\dag$, this convergence of the squared norm of the residual is equivalent to the convergence of the error $\|\xi(t)-\xdag\|^2$, but this is no longer true in the ill-posed case where the pseudoinverse is unbounded.
In contrast to this, our approach directly gives convergence rates for $\|\xi(t)-\xdag\|^2$, which then imply a convergence (typically of higher order) of the squared norm of the residual.

We will proceed as follows:
\begin{itemize}
 \item In \autoref{se:review} we revisit convergence rates results of regularisation methods from \cite{AlbElbHooSch16}, 
       which, in particular, allow to analyse first and higher order dynamics.
 \item In the following sections we apply the general results of \autoref{se:review} to regularising flow equations. 
       In \autoref{se:show} we 
       derive well-known convergence rates results of Showalter's method and prove optimality of this method. In 
       \autoref{seSecOrd} we prove regularising properties, optimality and convergence rates of the heavy ball dynamical 
       flow. In the context of inverse problems this method has already been analysed by \cite{ZhaHof20}, 
       however not in terms of optimality, as it is done here.
 \item In \autoref{se:sflow} we consider the vanishing viscosity flow. 
       We apply the general theory 
       of \autoref{se:review} and prove optimality of this method. In particular we prove under source conditions 
       (see for instance \cite{Gro84,EngHanNeu96}) optimal convergence rates (in the sense of regularisation theory) for $\|\xi(t)-\xdag\|^2$. These rates (and the resulting ones for the squared norm of the residual) are seen to interpolate nicely between the known rates in the well-posed (finite-dimensional) and those in the ill-posed setting when varying the regularity of the solution $\xdag$ (via changing the parameter $\mu$ in \autoref{tbViscosity}).
\end{itemize}

We want to emphasise that the terminologies \emph{optimal} from \cite{AujDosRon19} (a representative reference for this field) and 
\cite{AlbElbHooSch16} differ by the class of problems and the amount of a priori information taken into account. In \cite{AujDosRon19} 
best worst case error rates in the class of convex energies are derived, while we focus on squared functionals $\mathcal J$.
Moreover, we take into account prior knowledge on the solution. In view of this, it is not surprising that we get different ``optimal'' rates.


\section{Generalisations of Convergence Rates Results} \label{se:review}
In the following we slightly generalise convergence rates and saturation results from \cite{AlbElbHooSch16} so that they 
can be applied to prove convergence of the second order regularising flows in \autoref{seSecOrd} and \autoref{se:sflow}.
Thereby one needs to be aware that in classical regularisation theory, the regularisation parameter $\alpha > 0$ is considered 
a small parameter, meaning that we consider small perturbations of \autoref{eq:linear}. For dynamic 
regularisation methods of the form of \autoref{eqODE} we take large times to approximate the stationary state. To link these two 
theories, we will apply an inverse polynomial identification of optimal regularisation time and regularisation parameter.

Let $L\colon\mathcal X\to \mathcal Y$ be a bounded linear operator between two real Hilbert spaces $\mathcal X$ and $\mathcal Y$ with operator norm $\norm{L}$, 
$y \in \mathcal R(L)$, and let $\xdag \in \mathcal X$ be the minimum norm solution of $L x = y$ defined by
\[ L x^\dag=y\text{ and }\|x^\dag\|=\inf\{\norm x\mid L x=y\}. \]

\begin{definition}\label{deGenerator} 
We call a family $(\ra)_{\alpha>0}$ of continuous functions $\ra\colon[0,\infty)\to [0,\infty)$ the 
generator of a regularisation method if
\begin{enumerate}
\item \label{enGeneratorBounded} 
there exists a constant $\sigma\in(0,1)$ such that
\begin{equation}\label{eqGeneratorBounded}
\ra(\lambda) \le \min \set{\frac2\lambda,\frac\sigma{\sqrt{\alpha\lambda}}} \text{ for every } \lambda>0,\;\alpha>0; 
\end{equation}

\item \label{enGeneratorError}
the error function $\tra\colon(0,\infty)\to[-1,1]$, defined by
\begin{equation}\label{eqGeneratorError}
\tra(\lambda)=1-\lambda \ra(\lambda),\;\lambda > 0,
\end{equation}
is non-negative and monotonically decreasing on the interval $(0,\alpha)$;

\item \label{enGeneratorErrorReg}
there exists for every $\alpha>0$ a monotonically decreasing, continuous function $\Tra\colon(0,\infty)\to\cinterval{1} $ such that 
\[ \Tra \ge |\tra|  \text{ and } \alpha \mapsto \Tra(\lambda) \text{ is continuous and monotonically increasing for every fixed } \lambda > 0; \]

\item \label{enGeneratorLimit}
there exists for every $\bar\alpha>0$ a constant $\tilde\sigma\in(0,1)$ such that
\[ \Tra(\alpha)<\tilde\sigma \text{ for all } \alpha\in(0,\bar\alpha). \]
\end{enumerate}
\end{definition}

\begin{remark}
The definition of the generator of a regularisation method differs from the one in \cite{AlbElbHooSch16} by allowing the regularisation method to overshoot, meaning that $\ra(\lambda)>\frac1\lambda$ is possible at some points $\lambda>0$ (the choice $\ra(\lambda)=\frac1\lambda$, which is not a regularisation method in the sense of \autoref{deGenerator}, would correspond to taking the inverse without regularisation, see \autoref{eq:reg}). Consequently, we also relaxed the assumption that the error function $\tra$ is monotonically decreasing to the existence of a monotonically decreasing upper bound $\Tra$ for $\tra$. We also want to remark that in the definition of the error function in \cite{AlbElbHooSch16}, $\tilde r_\alpha^{\text{\cite{AlbElbHooSch16}}}$, there is an additional square included, that is, $\tilde r_\alpha^{\text{\cite{AlbElbHooSch16}}}=\tra^2$.
\end{remark}

\begin{definition}\label{de:error_f}
Let $(\ra)_{\alpha>0}$ be the generator of a regularisation method.
\begin{enumerate}
\item
The regularised solutions according to a generator $(\ra)_{\alpha>0}$ and data $\tilde{y}$ are defined by 
\begin{equation} \label{eq:reg}
\xa\colon\mathcal Y\to \mathcal X,\;\xa(\tilde{y}) = \ra(L^*L)L^* \tilde{y},
\end{equation}
where we use the bounded Borel functional calculus to identify the function $\ra\colon[0,\infty)\to[0,\infty)$ with a function acting on the space of positive semi-definite self-adjoint operators, see \cite[Chapter~XI.12]{Yos95}, for example.

\item
Let $(\Tra)_{\alpha>0}$ be as in \autoref{deGenerator}~\ref{enGeneratorErrorReg}. Then we define for all $\alpha>0$ the envelopes
\begin{equation}\label{eq:RA}
\Ra\colon(0,\infty)\to[0,\infty),\;\Ra(\lambda) = \frac1\lambda\left(1-\Tra(\lambda)\right), 
\end{equation}
and the corresponding regularised solutions 
\begin{equation}\label{eq:Xa}
\Xa\colon\mathcal Y\to \mathcal X,\;\Xa(\tilde{y})=\Ra(L^*L)L^*\tilde{y}.
\end{equation}
\end{enumerate}
\end{definition}

\begin{remark}
The family $(\Ra)_{\alpha>0}$ is also a generator of a regularisation method, since we have
\begin{equation}\label{eqUpperRegularisation}
\Ra(\lambda) = \frac{1-\Tra(\lambda)}\lambda \le \frac{1-\tra(\lambda)}\lambda = \ra(\lambda) \le \min \set{ \frac2\lambda,\frac\sigma{\sqrt{\alpha\lambda}} }\text{ for every }\lambda>0,\;\alpha>0,
\end{equation}
which verifies \autoref{deGenerator}~\ref{enGeneratorBounded}; and the other three conditions of \autoref{deGenerator} are tautologically fulfilled: \autoref{deGenerator}~\ref{enGeneratorError} by the definition of $\Tra$ via \autoref{deGenerator}~\ref{enGeneratorErrorReg}, and \autoref{deGenerator}~\ref{enGeneratorErrorReg} and~\ref{enGeneratorLimit} by choosing $\Tra$ itself as upper bound for $|\Tra|$.
\end{remark}

The idea of these regularised solutions is to replace the unbounded inverse of $L\colon\mathcal N(L)^\perp\to\mathcal R(L)$ by the bounded approximation $x_\alpha$, where the parameter $\alpha>0$ quantifies the regularisation. It should disappear in the limit $\alpha\to0$, where we typically expect $r_\alpha(\lambda)\to\frac1\lambda$ corresponding to $x_\alpha(y)\to(L^*L)^\dag L^*y = x^\dag$ (this is, however, not enforced by \autoref{deGenerator}, but we will add in \autoref{deCompatible} a compatibility condition to ensure this).
\begin{example}
The most prominent regularisation method is probably Tikhonov regularisation, where the regularised solution $x_\alpha(\tilde y)$ is defined as the minimisation point of the Tikhonov functional
\[ \mathcal T_{\alpha,\tilde y}\colon\mathcal X\to\R,\;\mathcal T_{\alpha,\tilde y}(x) = \|L x-\tilde y\|^2+\alpha\|x\|^2. \]
Solving the optimality condition, gives us for $x_\alpha(\tilde y)$ the expression
\[ x_\alpha(\tilde y) = (L^*L+\alpha I)^{-1}L^*\tilde y, \]
where $I\colon\mathcal X\to\mathcal X$ denotes the identity map on $\mathcal X$, which has with $r_\alpha(\lambda)\coloneqq\frac1{\lambda+\alpha}$ the form of \autoref{eq:reg} and $r_\alpha$ satisfies all the conditions of \autoref{deGenerator}, see \cite[Example~2.4]{AlbElbHooSch16}.
\end{example}

We will show later in \autoref{se:show}, \autoref{seSecOrd}, and \autoref{se:sflow} that also some common dynamical regularisation methods fall into this regularisation scheme so that all the convergence rates results from this section can be applied to these methods.

\begin{definition}\label{de:measure}
We denote by $A\mapsto\mathbf E_A$ and $A \mapsto\mathbf F_A$ the spectral measures of the operators $L^*L$ and~$LL^*$, 
respectively, on all Borel sets $A \subseteq[0,\infty)$; and we define the right-continuous and monotonically increasing function 
\begin{equation}
\label{eqSpectralFunction}
e\colon(0,\infty) \to[0,\infty),\; e(\lambda)=\|\mathbf E_{[0,\lambda]}\xdag \|^2.
\end{equation}
We remark that the minimum norm solution $\xdag$ is in the orthogonal complement of the null space $\mathcal N(L)$ of $L$ and we therefore have $\mathbf E_{[0,\lambda]}\xdag=\mathbf E_{(0,\lambda]}\xdag$.

Moreover, if $f\colon(0,\infty)\to\R$ is a right-continuous, monotonically increasing, and bounded function, we write  
\[ \int_a^b g(\lambda)\d f(\lambda) = \int_{(a,b]} g(\lambda)\d\mu_f(\lambda) \]
for the Lebesgue--Stieltjes integral of $f$, where $\mu_f$ denotes the unique non-negative Borel measure defined by $\mu_f((\lambda_1,\lambda_2])=f(\lambda_2)-f(\lambda_1)$ and $g\in L^1(\mu_f)$.

We introduce the following quantities, whose behaviour we want to relate to each other:
\begin{itemize}
\item
the {\bf spectral tail} of the minimum norm solution $\xdag$ with respect to the operator $L^*L$, that is, the asymptotic behaviour of $e(\lambda)$ as $\lambda$ tends to zero, see \cite{Neu97};

\item
the error between the minimum norm solution $\xdag$ and the regularised solution~$\xa(y)$ or~$\Xa(y)$ for the exact data $y$ called the {\bf noise free regularisation error}, that is,
\begin{equation}\label{eq:dD} 
d(\alpha) \coloneqq \norm{\xa(y)-\xdag}^2 \text{ and } D(\alpha)\coloneqq \norm{\Xa(y)-\xdag}^2, 
\end{equation}
respectively, as $\alpha$ tends to zero;

\item
the {\bf best worst case error} between the minimum norm solution $\xdag$ and the regularised solution~$\xa(\tilde{y})$ or~$\Xa(\tilde{y})$ for some data $\tilde{y}$ with distance less than or equal to $\delta>0$ to the exact data $y$ under optimal choice of the regularisation parameter $\alpha$, that is,
\begin{equation} \label{eq:tilde_dD} 
\tilde d(\delta) \coloneqq\sup_{\tilde{y}\in\bar B_\delta(y)}\inf_{\alpha>0} \norm{\xa(\tilde y)-\xdag}^2 
\text{ and } \tilde D(\delta) \coloneqq\sup_{\tilde{y}\in\bar B_\delta(y)}\inf_{\alpha>0} \norm{\Xa(\tilde y)-\xdag}^2,
\end{equation}
respectively, as $\delta$ tends to zero;

\item
the {\bf noise free residual error}, which is the error between the image of the regularised solution~$\xa(y)$ or~$\Xa(y)$ and the exact data $y$, that is,
\begin{equation}\label{eqQ}
q(\alpha) \coloneqq \|L\xa(y)-y\|^2\text{ and } Q(\alpha) \coloneqq\|L\Xa(y)-y\|^2,
\end{equation}
respectively, as $\alpha$ tends to zero.
\end{itemize}
\end{definition}

To describe the behaviour of these quantities, we consider, for example, convergence rates of the form
\[ d(\alpha) = \|\xa(y) - \xdag\|^2 \le C_d\varphi(\alpha)\text{ for all }\alpha>0, \]
with some constant $C_d>0$ for the noise free regularisation error $d$, characterised by the decay of a monotonically increasing function $\varphi\colon(0,\infty)\to(0,\infty)$ for $\alpha\to0$, and look for a corresponding (equivalent) characterisation of the convergence rates of the other quantities, such as $e(\lambda)=\|\mathbf E_{[0,\lambda]}\xdag \|^2$ or $q(\alpha)=\norm{L\xa(y)-y}^2$.

\begin{example}\label{exCR}
Common families of functions $\varphi$ used to describe the convergence rates are Hölder functions
\begin{equation}\label{eqCRHoelder}
\phiH_\mu\colon(0,\infty)\to\R,\;\phiH_\mu(\alpha) = \alpha^\mu\text{ for all }\mu>0,
\end{equation}
see \cite{Gro84}, for example;
and logarithmic
\begin{equation}\label{eqCRLog}
\phiL_\mu\colon(0,\infty)\to\R,\;\phiL_\mu(\alpha) = \begin{cases}\left|\log\alpha\right|^{-\mu},&\alpha<\e^{-1}, \\ 1,&\alpha\ge\e^{-1},\end{cases}\text{ for all }\mu>0,
\end{equation}
or even double logarithmic functions, see for instance \cite{Hoh00,Sch01a}.
See \autoref{fgCR} for a sketch of the graphs of these functions.
\end{example}

\begin{figure}[htb]
\centering\includegraphics{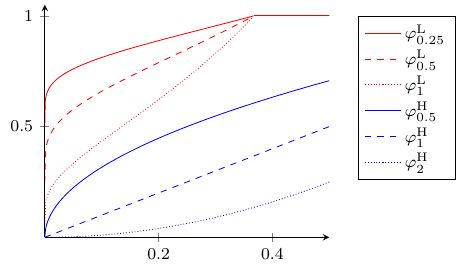}
\caption{Graphs of some common functions used to characterise convergence rates. See \autoref{exCR} for the definitions of these functions.}\label{fgCR}
\end{figure}

The main results are collected in \autoref{thCR} and \autoref{thCRImage}. We proceed in the following way to derive them:
\begin{itemize}
\item
In \autoref{thCrResidualRepresentations} and \autoref{thRegErrorMonotone}, we write the different regularisation errors in spectral form.
\item
In \autoref{thExactData} and \autoref{thExactData2}, we show the relations between the convergence rates of the noise free quantities $e$, $d$, and $D$. For this, we require the function $\varphi$, which describes the rate of convergence and is the same for all three quantities, to be compatible with the regularisation method, see \autoref{deCompatible}.
\item
In \autoref{thNoisyData} and \autoref{thNoisyData2}, we derive the relations of the best worst case errors $\tilde d$ and $\tilde D$ to the quantities $e$ and $D$. The corresponding rate of convergence is hereby of the form $\Phi[\varphi]$, where the mapping $\Phi$ is introduced in \autoref{deTransform} and some of its elementary properties are shown in \autoref{thTransform}, \autoref{thTransformMonotone}, \autoref{thTransformConstant}, and \autoref{thTransformHomogeneous}.
\item
The statements for the residual errors $q$ and $Q$ are then concluded from \autoref{thCR} by using the identification of $q$ and $Q$ for the minimum norm solution $x^\dag$ with the noise free errors $d$ and $D$ for the minimum norm solution $\bar x^\dag=(L^*L)^{\frac12}x^\dag$ of the problem $L x=\bar y$ with $\bar y=L\bar x^\dag$, and they are summarised in \autoref{thCRImage}, \autoref{thCRImageSimple}, and \autoref{thResidualCR}.
\end{itemize}

In the remaining of this section, we will always consider $(\ra)_{\alpha>0}$ to be the generator of a regularisation method with an envelope $(\Ra)_{\alpha>0}$ and corresponding regularised solutions $(\xa)_{\alpha>0}$ and $(\Xa)_{\alpha>0}$, respectively. Moreover, we use the functions $e$, $d$, $D$, $\tilde d$, $\tilde D$, $q$, and $Q$ as defined in \autoref{de:measure}, see \autoref{fgNotation} for a summary of the notation.

\begin{table}
\begin{center}
\renewcommand{\arraystretch}{1.2}
\begin{tabular}{|l|l|l|}
\hline
Abbreviation & Description & Reference \\
\hline\hline
$\ra$ & Generator & \autoref{deGenerator} \\
\hline   
$\Ra$ & Envelope generator & \autoref{eq:RA} \\
\hline
$\tra$ & Error function & \autoref{eqGeneratorError}\\
\hline
$\Tra$ & Envelope error function & \autoref{deGenerator}~\ref{enGeneratorErrorReg}\\
\hline
$\xa(\tilde y)=\ra(L^*L)L^*\tilde y$ & Regularised solution according to $\ra$ & \autoref{eq:reg}\\
\hline
$\Xa(\tilde y)=\Ra(L^*L)L^*\tilde y$ & Regularised solution according to $\Ra$ & \autoref{eq:Xa}\\
\hline\hline
$d(\alpha)=\norm{\xa(y)-\xdag}^2$ & Noise free regularisation error for $\ra$ & \autoref{eq:dD}\\
\hline
$D(\alpha)=\norm{\Xa(y)-\xdag}^2$ & Noise free regularisation error for $\Ra$ & \autoref{eq:dD}\\
\hline
$\tilde{d}(\delta)$ & Best worst case error for $\ra$ & \autoref{eq:tilde_dD}\\
\hline
$\tilde{D}(\delta)$ & Best worst case error for $\Ra$ & \autoref{eq:tilde_dD}\\
\hline
$q(\alpha)=\norm{L\xa(y)-y}^2$ & Noise free residual error for $\ra$ & \autoref{eqQ}\\
\hline
$Q(\alpha)=\norm{L\Xa(y)-y}^2$ & Noise free residual error for $\Ra$ & \autoref{eqQ}\\
\hline\hline
$\mathbf E_A, \mathbf F_A$ & Spectral measures of $L^*L, LL^*$ & \autoref{de:measure}\\
\hline
$e(\lambda)=\|\mathbf E_{[0,\lambda]}\xdag \|^2$ & Spectral tail of $\xdag$ & \autoref{eqSpectralFunction}\\
\hline\hline 
$\hat{\varphi}$ & $\hat{\varphi}(\alpha) = \sqrt{\alpha \varphi(\alpha)}$ & \autoref{deTransform}\\
\hline
$\hat\varphi^{-1}$ & Generalised inverse of a function $\hat\varphi$ & \autoref{deTransform}\\
\hline
$\Phi$ & Noise-free to noisy transform & \autoref{deTransform}\\
\hline
\end{tabular}
\renewcommand{\arraystretch}{1}
\end{center}
\caption{Used variables and references to their definitions.}\label{fgNotation}
\end{table}

\subsection{Spectral Representations of the Regularisation Errors}

To do the analysis, we will expand the quantities of interest with respect to the measure $A\mapsto\norm{\mathbf E_A\xdag}^2$, which describes the spectral decomposition of $\xdag$ with respect to the operator $L^*L$. With the function $e$ defined in \autoref{eqSpectralFunction}, we can write the resulting integrals in the form of Lebesgue--Stieltjes integrals.

\begin{lemma}\label{thCrResidualRepresentations}
We have the representations
\begin{equation}\label{eqCrResidum} 
d(\alpha) = \int_0^{\norm{L}^2}\tra^2(\lambda)\d e(\lambda) \text{ and } D(\alpha) = \int_0^{\norm{L}^2}\Tra^2(\lambda)\d e(\lambda)
\end{equation}
for the regularisation errors $d$ and $D$, respectively, and
\begin{equation}\label{eqCrResidual} 
q(\alpha) = \int_0^{\norm{L}^2}\lambda\tra^2(\lambda)\d e(\lambda) \text{ and } Q(\alpha) = \int_0^{\norm{L}^2}\lambda\Tra^2(\lambda)\d e(\lambda)
\end{equation}
for the residuals $q$ and $Q$, respectively.
\end{lemma}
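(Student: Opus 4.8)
The plan is to reduce all four quantities to the functional calculus of the self-adjoint operator $L^*L$ evaluated at $\xdag$, exploiting that the exact data fulfils $y = L\xdag$.

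First I would record the identity $L^*y = L^*L\,\xdag$: since $y\in\mathcal R(L)$, the minimum norm solution is an honest solution $L\xdag = y$, and applying $L^*$ gives the claim. Substituting this into \autoref{eq:reg} yields $\xa(y) = \ra(L^*L)\,L^*L\,\xdag$, hence
\[ \xa(y) - \xdag = \bigl(\ra(L^*L)\,L^*L - I\bigr)\xdag = -\,\tra(L^*L)\,\xdag \]
by the definition \autoref{eqGeneratorError} of $\tra$. The only delicate point is that $\ra$, and thus $\tra$, is a priori only defined on $(0,\infty)$, whereas $0$ may lie in the spectrum of $L^*L$; but $\mathbf E_{\{0\}}$ is the orthogonal projection onto $\mathcal N(L^*L) = \mathcal N(L)$ and $\xdag\in\mathcal N(L)^\perp$, so $\mathbf E_{\{0\}}\xdag = 0$ (the remark following \autoref{de:measure}), which makes the value assigned to $\ra$ and $\tra$ at $\lambda = 0$ immaterial and means every spectral integral below effectively runs over $(0,\norm{L}^2]$ — precisely what the symbol $\int_0^{\norm{L}^2}$ abbreviates. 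The analogous computation with $\Ra$ instead of $\ra$, now using $\lambda\Ra(\lambda) = 1 - \Tra(\lambda)$ from \autoref{eq:RA}, gives $\Xa(y) - \xdag = -\,\Tra(L^*L)\,\xdag$.

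Next I would invoke the spectral theorem for $L^*L$: for any bounded Borel function $g$ on $[0,\norm{L}^2]$ one has
\[ \norm{g(L^*L)\xdag}^2 = \int_0^{\norm{L}^2} g(\lambda)^2 \d\norm{\mathbf E_{[0,\lambda]}\xdag}^2 = \int_0^{\norm{L}^2} g(\lambda)^2 \d e(\lambda), \]
the last step being the definition \autoref{eqSpectralFunction} of $e$. Choosing $g = \tra$ (bounded since $\abs{\tra}\le1$) and $g = \Tra$ (bounded since $\Tra$ takes values in $\cinterval{1}$) produces exactly the two representations in \autoref{eqCrResidum} for $d(\alpha) = \norm{\xa(y)-\xdag}^2$ and $D(\alpha) = \norm{\Xa(y)-\xdag}^2$.

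For the residuals I would use $\norm{Lz}^2 = \inner{L^*Lz}{z}$ together with $L\xa(y) - y = L\xa(y) - L\xdag = L(\xa(y)-\xdag)$. Taking $z = \tra(L^*L)\xdag$ gives $q(\alpha) = \inner{L^*L\,\tra^2(L^*L)\,\xdag}{\xdag}$, and applying the spectral theorem to the bounded Borel function $\lambda\mapsto\lambda\,\tra^2(\lambda)$ yields $q(\alpha) = \int_0^{\norm{L}^2}\lambda\,\tra^2(\lambda)\d e(\lambda)$; the same argument with $\Tra$ in place of $\tra$ gives the formula for $Q$ in \autoref{eqCrResidual}. I do not anticipate a genuine obstacle here: beyond the behaviour at $\lambda = 0$ just discussed, the only thing to keep track of is that $e$ is right-continuous, monotonically increasing and bounded — so that the Lebesgue–Stieltjes measure $\mu_e$ of the paper coincides with the spectral measure $A\mapsto\norm{\mathbf E_A\xdag}^2$ on $(0,\norm{L}^2]$ — which is guaranteed by \autoref{de:measure}.
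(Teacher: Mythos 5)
Your proposal is correct and follows essentially the same route as the paper: rewrite $\xa(y)-\xdag$ and $\Xa(y)-\xdag$ as $(\ra(L^*L)L^*L-I)\xdag$ and $(\Ra(L^*L)L^*L-I)\xdag$ (i.e.\ $-\tra(L^*L)\xdag$ and $-\Tra(L^*L)\xdag$) using $L^*y=L^*L\xdag$, then apply the spectral theorem, and for the residuals insert the extra factor $\lambda$ via $\norm{Lz}^2=\inner{z}{L^*Lz}$. Your additional remark that $\mathbf E_{\{0\}}\xdag=0$ justifies ignoring the value at $\lambda=0$ is a fine (and correct) elaboration of a point the paper handles implicitly via the remark after \autoref{de:measure}.
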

\begin{proof}
We can write the differences between one of the regularised solutions $\xa(y)$ or $\Xa(y)$ and the minimum norm solution $x^\dag$ in the form
\begin{align*}
\xa(y)-\xdag  &= \ra(L^*L)L^*y - \xdag  = (\ra(L^*L)L^*L-I)\xdag \text{ and} \\
\Xa(y)-\xdag  &= (\Ra(L^*L)L^*L-I)\xdag,
\end{align*}
respectively, where $I\colon\mathcal X\to\mathcal X$ denotes the identity map on $\mathcal X$. According to spectral theory, we can formulate this with the definition of the error functions $\tra$ and $\Tra$, see \autoref{eqGeneratorError} and \autoref{eq:RA}, as
\[ \norm{\xa(y)-\xdag}^2 = \int_0^{\norm{L}^2}\tra^2(\lambda)\d e(\lambda) \text{ and } \norm{\Xa(y)-\xdag}^2 = \int_0^{\norm{L}^2}\Tra^2(\lambda)\d e(\lambda). \]

For the differences between the image of the regularised solution $\xa(y)$ or $\Xa(y)$ and the exact data, we find similarly
\begin{align*}
\|L\xa(y)-y\|^2  &= \|L\ra(L^*L)L^*L x^\dag - L\xdag\|^2  = \left<x^\dag,L^*L(\ra(L^*L)L^*L-I)^2x^\dag\right> \text{ and} \\
\|L\Xa(y)-y\|^2  &= \left<x^\dag,L^*L(\Ra(L^*L)L^*L-I)^2x^\dag\right>. 
\end{align*}
Thus, we have
\[ \norm{L\xa(y)-y}^2 = \int_0^{\norm{L}^2}\lambda\tra^2(\lambda)\d e(\lambda)\text{ and } \norm{L\Xa(y)-y}^2 = \int_0^{\norm{L}^2}\lambda\Tra^2(\lambda)\d e(\lambda). \]
\end{proof}

From this representation, we immediately get that the regularised solutions $(\xa)_{\alpha>0}$ and $(\Xa)_{\alpha>0}$ converge to the minimum norm solution $\xdag$ if the error functions $(\tra)_{\alpha>0}$ and $(\Tra)_{\alpha>0}$ tend to zero as $\alpha\to0$.

\begin{corollary}\label{thRegErrorMonotone}
The regularisation errors $D$, $Q$, $\tilde d$, and $\tilde D$ (but not necessarily $d$ and $q$) are monotonically increasing functions and the functions $D$ and $Q$ are also continuous.

Moreover, if $\lim_{\alpha\to0}\tra(\lambda)=0$ (or $\lim_{\alpha\to0}\Tra(\lambda)=0$, respectively) for every $\lambda>0$, then the regularised solutions $\xa(y)$ (or $\Xa(y)$, respectively) converge for $\alpha\to0$ in the norm topology to the minimum norm solution $\xdag$.
\end{corollary}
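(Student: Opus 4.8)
The plan is to derive every assertion from the spectral representations of \autoref{thCrResidualRepresentations} together with the structural properties of $\Tra$ collected in \autoref{deGenerator}. One preliminary observation is used repeatedly: since $e$ is right-continuous, monotonically increasing, and bounded by $\norm{\xdag}^2$, the associated Lebesgue--Stieltjes measure $\mu_e$ is a \emph{finite} Borel measure; it is carried by the interval $(0,\norm{L}^2]$, because the spectrum of $L^*L$ lies in $\cinterval{\norm{L}^2}$, and it has no atom at $0$, because $\xdag$ is orthogonal to the null space $\mathcal N(L)$ (see \autoref{de:measure}, where $\mathbf E_{\{0\}}\xdag=0$ is noted). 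Consequently the constant function $1$ and the function $\lambda\mapsto\lambda$ are $\mu_e$-integrable, which is all that is required to invoke the dominated convergence theorem below.

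\emph{Monotonicity and continuity of $D$ and $Q$.} By \autoref{deGenerator}~\ref{enGeneratorErrorReg}, for each fixed $\lambda>0$ the map $\alpha\mapsto\Tra(\lambda)$ is continuous, monotonically increasing, and takes values in $\cinterval{1}$; hence $\alpha\mapsto\Tra^2(\lambda)$ has the same three properties. Inserting this into the representations $D(\alpha)=\int_0^{\norm{L}^2}\Tra^2(\lambda)\d e(\lambda)$ and $Q(\alpha)=\int_0^{\norm{L}^2}\lambda\Tra^2(\lambda)\d e(\lambda)$ from \autoref{thCrResidualRepresentations}, monotonicity of the Lebesgue--Stieltjes integral with respect to its integrand shows that $D$ and $Q$ are monotonically increasing. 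For continuity, let $\alpha_n\to\alpha_0$; the corresponding integrands converge pointwise in $\lambda$ and are bounded, uniformly in $n$, by the $\mu_e$-integrable functions $1$ and $\lambda\mapsto\lambda$ respectively, so dominated convergence yields $D(\alpha_n)\to D(\alpha_0)$ and $Q(\alpha_n)\to Q(\alpha_0)$.

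\emph{Monotonicity of $\tilde d$ and $\tilde D$.} This is immediate from the definition \autoref{eq:tilde_dD}: if $0<\delta_1\le\delta_2$, then $\bar B_{\delta_1}(y)\subseteq\bar B_{\delta_2}(y)$, so the supremum of $\inf_{\alpha>0}\norm{\xa(\tilde y)-\xdag}^2$ (respectively with $\Xa$ in place of $\xa$) over the larger ball is at least as large. The same reasoning fails for $d$ and $q$, since $\tra$, unlike $\Tra$, is not assumed monotone in $\alpha$ (overshooting is explicitly permitted), so neither $\tra^2$ nor the integrals $\int_0^{\norm{L}^2}\tra^2\d e$ and $\int_0^{\norm{L}^2}\lambda\tra^2\d e$ need be monotone.

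\emph{Norm convergence.} Assume $\lim_{\alpha\to0}\tra(\lambda)=0$ for every $\lambda>0$. By \autoref{thCrResidualRepresentations} and \autoref{eq:dD} we have $\norm{\xa(y)-\xdag}^2=\int_0^{\norm{L}^2}\tra^2(\lambda)\d e(\lambda)$; the integrand tends to $0$ pointwise on $(0,\norm{L}^2]$, which carries $\mu_e$, and is dominated by the $\mu_e$-integrable constant $1$ since $\abs{\tra}\le1$ by \autoref{eqGeneratorError}. Dominated convergence then gives $\norm{\xa(y)-\xdag}\to0$, and replacing $\tra$ by $\Tra$ and $d$ by $D$ yields the statement for $\Xa(y)$. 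The argument contains no genuinely hard step; the only points needing care are the finiteness of $\mu_e$ and the absence of an atom at $\lambda=0$ — which is exactly what makes the pointwise hypotheses sufficient for dominated convergence — together with the routine passage from the continuous parameter $\alpha$ to sequences when that theorem is applied.
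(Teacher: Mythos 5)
Your proof is correct and follows essentially the same route as the paper's: the spectral representations from Lemma \ref{thCrResidualRepresentations}, monotonicity of $\alpha\mapsto\Tra(\lambda)$ for the increase of $D$ and $Q$, nested balls for $\tilde d$ and $\tilde D$, and dominated convergence (with the bound $\abs{\tra}\le\Tra\le1$ and finiteness of $\mu_e$) for continuity and for the norm convergence as $\alpha\to0$. The extra remarks on the overshoot of $\tra$ and the absence of an atom at $\lambda=0$ are accurate but not essentially different from the paper's argument.
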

\begin{proof}
By assumption, see \autoref{deGenerator}~\ref{enGeneratorErrorReg}, $\alpha\mapsto\Tra(\lambda)$ is monotonically increasing, and so are the functions
\[ \alpha\mapsto D(\alpha) = \int_0^{\norm{L}^2}\Tra^2(\lambda)\d e(\lambda)\text{ and }\alpha\mapsto Q(\alpha) = \int_0^{\norm{L}^2}\lambda\Tra^2(\lambda)\d e(\lambda). \]

The monotonicity of $\tilde d$ and $\tilde D$ follows directly from their definition in \autoref{eq:tilde_dD} as suprema over the increasing sets $\bar B_\delta(y)$, $\delta>0$.

Since $\Tra(\lambda)\in[0,1]$ for every $\alpha>0$ and every $\lambda>0$ and $\alpha\mapsto\Tra(\lambda)$ is for every $\lambda>0$ continuous, see \autoref{deGenerator}~\ref{enGeneratorErrorReg}, Lebesgue's dominated convergence theorem implies for every $\alpha_0>0$:
\[ \lim_{\alpha\to\alpha_0}D(\alpha) = \int_0^{\norm{L}^2}\lim_{\alpha\to\alpha_0}\Tra^2(\lambda)\d e(\lambda) = D(\alpha_0)\text{ and }\lim_{\alpha\to\alpha_0}Q(\alpha) = \int_0^{\norm{L}^2}\lim_{\alpha\to\alpha_0}\lambda\Tra^2(\lambda)\d e(\lambda) = Q(\alpha_0), \]
which proves the continuity of $D$ and $Q$.

Similarly, we get with $|\tra(\lambda)|\le\Tra(\lambda)\le1$ for every $\alpha>0$ and every $\lambda>0$ from Lebesgue's dominated convergence theorem that
\begin{alignat*}{3}
\lim_{\alpha\to0}\norm{\xa(y)-\xdag}^2 &= \lim_{\alpha\to0}d(\alpha) &&= \int_0^{\norm{L}^2}\lim_{\alpha\to0}\tra^2(\lambda)\d e (\lambda) &&= 0\text{ if }\lim_{\alpha\to0}\tra(\lambda)=0\text{ and} \\
\lim_{\alpha\to0}\norm{\Xa(y)-\xdag}^2 &= \lim_{\alpha\to0}D(\alpha) &&= \int_0^{\norm{L}^2}\lim_{\alpha\to0}\Tra^2(\lambda)\d e (\lambda) &&= 0\text{ if }\lim_{\alpha\to0}\Tra(\lambda)=0.
\end{alignat*}
\end{proof}


\subsection{Bounds for the Noise Free Regularisation Errors}

The representations of the noise free regularisation errors as integrals over the spectral tail $e$ allow us to characterise the convergence of the regularisation errors $d(\alpha)$ and $D(\alpha)$ in the limit $\alpha\to0$ in terms of the behaviour of the spectral tail $e(\lambda)$ for $\lambda\to0$.

\begin{lemma}\label{thExactData}
With the constant $\sigma\in(0,1)$ from \autoref{deGenerator}~\ref{enGeneratorBounded}, we have for every $\alpha>0$ the relation
\begin{equation}\label{eqCRNoiseFreeDged}
(1-\sigma)^2e(\alpha) \le d(\alpha) \le D(\alpha).
\end{equation}
That is, $(1-\sigma)^2$ times the spectral tail is a lower bound for the noise free regularisation error of the regularisation method, which 
in turn is a lower bound for the error of the regularisation method of the envelope generator.
\end{lemma}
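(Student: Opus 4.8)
The plan is to start from the spectral representations established in \autoref{thCrResidualRepresentations}, namely $d(\alpha) = \int_0^{\norm{L}^2}\tra^2(\lambda)\d e(\lambda)$ and $D(\alpha) = \int_0^{\norm{L}^2}\Tra^2(\lambda)\d e(\lambda)$, and to read off the two inequalities in \autoref{eqCRNoiseFreeDged} from pointwise estimates on the integrands together with monotonicity of the Lebesgue--Stieltjes integral with respect to $\d e$.

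For the upper inequality $d(\alpha)\le D(\alpha)$, the point is simply that $\Tra\ge|\tra|$ by \autoref{deGenerator}~\ref{enGeneratorErrorReg}, hence $\tra^2(\lambda)\le\Tra^2(\lambda)$ for every $\lambda>0$; integrating this pointwise inequality against the non-negative measure $\mu_e$ (that is, $\d e$) gives $d(\alpha)\le D(\alpha)$ at once. For the lower inequality $(1-\sigma)^2 e(\alpha)\le d(\alpha)$, I would restrict the integration to the interval $(0,\alpha]$, where $e(\alpha)=\int_0^\alpha\d e(\lambda)=\mu_e((0,\alpha])$ (using that $e$ is right-continuous and $\mathbf E_{[0,\lambda]}\xdag=\mathbf E_{(0,\lambda]}\xdag$ as noted in \autoref{de:measure}), so that
\[
 d(\alpha)\ge\int_0^{\alpha}\tra^2(\lambda)\d e(\lambda)\ge\Bigl(\inf_{\lambda\in(0,\alpha]}\tra(\lambda)\Bigr)^{\!2} e(\alpha),
\]
where I have used that $\tra$ is non-negative on $(0,\alpha)$ by \autoref{deGenerator}~\ref{enGeneratorError}. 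It then remains to bound $\inf_{\lambda\in(0,\alpha]}\tra(\lambda)$ from below by $1-\sigma$. Since $\tra$ is monotonically decreasing on $(0,\alpha)$, this infimum equals $\lim_{\lambda\uparrow\alpha}\tra(\lambda)=\tra(\alpha^-)$, and from $\tra(\lambda)=1-\lambda\ra(\lambda)$ together with the bound $\ra(\lambda)\le\frac{\sigma}{\sqrt{\alpha\lambda}}$ from \autoref{eqGeneratorBounded} we get $\lambda\ra(\lambda)\le\sigma\sqrt{\lambda/\alpha}\le\sigma$ for $\lambda\le\alpha$, hence $\tra(\lambda)\ge 1-\sigma$ for all $\lambda\in(0,\alpha]$ (at $\lambda=\alpha$ by continuity of $\tra$). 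Squaring yields $(1-\sigma)^2 e(\alpha)\le d(\alpha)$.

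The only point that needs a little care is the handling of the endpoint $\lambda=\alpha$ and the possible discontinuity of $e$ there: one should make sure that the restriction of the integral to $(0,\alpha]$ really captures the full mass $e(\alpha)=\mu_e((0,\alpha])$, which follows from the convention $\int_a^b=\int_{(a,b]}$ adopted after \autoref{de:measure}, and that the pointwise bound $\tra\ge 1-\sigma$ holds on the closed interval $(0,\alpha]$, which is why the estimate $\lambda\ra(\lambda)\le\sigma$ was derived for $\lambda\le\alpha$ rather than just $\lambda<\alpha$. I do not expect a genuine obstacle here; it is essentially a matter of assembling \autoref{thCrResidualRepresentations}, \autoref{deGenerator}~\ref{enGeneratorBounded}, and \autoref{deGenerator}~\ref{enGeneratorError}--\ref{enGeneratorErrorReg} correctly.
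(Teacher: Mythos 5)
Your proof is correct and follows essentially the same route as the paper: both inequalities are read off from the spectral representations of \autoref{thCrResidualRepresentations}, with $d(\alpha)\le D(\alpha)$ coming from $\Tra\ge|\tra|$ and the lower bound from restricting the integral to $(0,\alpha]$ and invoking the bound $\ra(\lambda)\le\frac\sigma{\sqrt{\alpha\lambda}}$. The only cosmetic difference is that you establish $\tra(\lambda)\ge1-\sigma$ pointwise on all of $(0,\alpha]$, whereas the paper uses the monotonicity of $\tra^2$ on $(0,\alpha)$ to reduce to the single endpoint value $\tra(\alpha)=1-\alpha\ra(\alpha)\ge1-\sigma$; both variants are valid.
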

\begin{proof}
Let $\alpha>0$ be fixed. With \autoref{eqCrResidum} and $\Tra\ge|\tra|$, according to \autoref{deGenerator}~\ref{enGeneratorErrorReg}, we find for the errors $d$ and $D$ that
\[ D(\alpha) = \int_0^{\norm{L}^2}\Tra^2(\lambda)\d e(\lambda) \ge \int_0^{\norm{L}^2}\tra^2(\lambda)\d e(\lambda) = d(\alpha). \]
Furthermore, since $\tra^2$ is monotonically decreasing on $[0,\alpha]$, according to \autoref{deGenerator}~\ref{enGeneratorError}, and $e(\lambda)=e(\norm{L}^2)$ for all $\lambda\ge\norm{L}^2$, we can estimate
\[ d(\alpha)  \ge \int_0^{\min\{\alpha,\norm{L}^2\}}\tra^2(\lambda)\d e(\lambda) = \int_0^\alpha\tra^2(\lambda)\d e(\lambda) \ge \tra^2(\alpha)e(\alpha). \]
Inserting the expression of \autoref{eqGeneratorError} for $\tra$ and using the upper bound from \autoref{deGenerator}~\ref{enGeneratorBounded}, we thus have
\[ d(\alpha) \ge (1-\alpha \ra(\alpha))^2e(\alpha) \ge (1-\sigma)^2e(\alpha). \]
\end{proof}

Since we did not require so far that the error functions $\tra$ and $\Tra$ vanish as $\alpha\to0$, we cannot assure that the regularised solutions $\xa(y)$ and $\Xa(y)$ converge as $\alpha\to0$ to the minimum norm solution or even get an upper bound on the regularisation errors $d$ and $D$. We therefore impose the following additional constraint for a function $\varphi$ to serve as an upper bound for the regularisation error.

\begin{definition}\label{deCompatible}
We call a monotonically increasing function $\varphi\colon(0,\infty)\to(0,\infty)$ compatible with the regularisation method $(\ra)_{\alpha>0}$ with correspondingly chosen error functions $(\Tra)_{\alpha>0}$ according to \autoref{deGenerator}~\ref{enGeneratorErrorReg} if there exists for arbitrary $\Lambda>0$ a monotonically decreasing, integrable function $F\colon[1,\infty)\to\R$ such that
\begin{equation}\label{eqSourceConditionTail}
\Tra^2(\lambda) \le F\left(\frac{\varphi(\lambda)}{\varphi(\alpha)}\right)\text{ for }0<\alpha\le\lambda\le\Lambda.
\end{equation}
\end{definition}

In particular, a monotonically increasing function $\varphi\colon(0,\infty)\to(0,\infty)$ with $\lim_{\alpha\to0}\varphi(\alpha)=0$ can only be compatible with $(\ra)_{\alpha>0}$ if
\begin{equation}\label{deCompatibleRate}
\lim_{\alpha\to0}\frac{\Tra^2(\lambda)}{\varphi(\alpha)} = 0\text{ for every }\lambda>0,
\end{equation}
since the integrability of the monotonically decreasing function $F$ in \autoref{eqSourceConditionTail} implies the asymptotic behaviour $\lim_{z\to\infty}z F(z)=0$.

\begin{remark}
With $F(z)=(A z)^{-\frac1\mu}$, $A\in(0,\infty)$, $\mu\in(0,1)$, \autoref{eqSourceConditionTail} is exactly the condition from \cite[Equation~7]{AlbElbHooSch16} for the error function $\Tra$ (there we assume that $\tra$ satisfies \autoref{deGenerator}~\ref{enGeneratorErrorReg} and~\ref{enGeneratorLimit} such that we can take $\Tra=\tra$).

These sort of conditions for ensuring convergence rates of the method have a long history. For the special choice $F(z)=Az^{-2}$, it was introduced as qualification of the regularisation method in \cite[Definition~1 and~2]{MatPer03}, which is now commonly used for characterising convergence rates, see \cite{HofMat07,FleHofMat11}, for example. Even before that, the condition was used for the convergence rates $\phiH_\mu$, see, for example, the textbooks \cite[Theorem 4.3]{Vai82}, \cite[Theorem 1.1 in Chapter 3]{VaiVer86}, and \cite[Theorem 4.3, Corollary 4.4]{EngHanNeu96}.
\end{remark}

\begin{lemma}\label{thExactData2}
Let $\varphi\colon(0,\infty)\to(0,\infty)$ be a monotonically increasing function which is compatible with $(\ra)_{\alpha>0}$ 
in the sense of \autoref{deCompatible} and dominates the spectral tail, that is,
\begin{equation}\label{eq:ellphi}
e(\lambda) \le \varphi(\lambda) \text{ for all }\lambda>0. 
\end{equation}
Then, with a monotonically decreasing and integrable function $F\colon[1,\infty)\to\R$ fulfilling \autoref{eqSourceConditionTail} for $\Lambda=\|L\|^2$, 
we get
\[ D(\alpha)\le (\max\{1,F(1)\}+\|F\|_{L^1})\varphi(\alpha)\text{ for all }\alpha>0. \]
That is, the order of the noise free regularisation error $D$ of the envelope generator $(\Ra)_{\alpha>0}$ is given by the function $\varphi$.
\end{lemma}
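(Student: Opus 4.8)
The plan is to start from the spectral representation $D(\alpha)=\int_0^{\norm{L}^2}\Tra^2(\lambda)\d e(\lambda)$ provided by \autoref{thCrResidualRepresentations} and to split it at $\lambda=\alpha$. If $L=0$ the statement is trivial, and if $\alpha\ge\norm{L}^2$ then, using $0\le\Tra\le1$, the domination \eqref{eq:ellphi}, and the monotonicity of $\varphi$, one gets $D(\alpha)\le\int_0^{\norm{L}^2}\d e(\lambda)\le e(\norm{L}^2)\le\varphi(\norm{L}^2)\le\varphi(\alpha)$, which already proves the bound; so I assume $0<\alpha<\norm{L}^2$ and write $D(\alpha)=\int_0^\alpha\Tra^2(\lambda)\d e(\lambda)+\int_\alpha^{\norm{L}^2}\Tra^2(\lambda)\d e(\lambda)$. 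The first summand is harmless, since $\Tra\le1$ and \eqref{eq:ellphi} give $\int_0^\alpha\Tra^2(\lambda)\d e(\lambda)\le e(\alpha)\le\varphi(\alpha)$, so it remains to prove $\int_\alpha^{\norm{L}^2}\Tra^2(\lambda)\d e(\lambda)\le(F(1)+\norm{F}_{L^1})\varphi(\alpha)$.

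For this I would invoke compatibility: \eqref{eqSourceConditionTail} with $\Lambda=\norm{L}^2$ gives $\Tra^2(\lambda)\le F(\varphi(\lambda)/\varphi(\alpha))$ for $\alpha\le\lambda\le\norm{L}^2$, and evaluated at $\lambda=\alpha$ it shows $F(1)\ge\Tra^2(\alpha)\ge0$; hence replacing $F$ by $\max\set{F,0}$ leaves $F(1)$ unchanged, does not enlarge $\norm{F}_{L^1}$, and preserves monotonicity, integrability and \eqref{eqSourceConditionTail}, so I may assume $F\ge0$ and therefore $\norm{F}_{L^1}=\int_1^\infty F(t)\d t$. The core of the argument is a Fubini computation: writing $F(z)=\int_0^{F(z)}\d r$ and noting that, since $F$ is monotonically decreasing, $E_r:=\set{w\ge1\mid F(w)>r}$ is an interval of the form $[1,b_r)$ or $[1,b_r]$ which is empty once $r\ge F(1)$, Tonelli's theorem yields
\[ \int_\alpha^{\norm{L}^2}F\Bigl(\tfrac{\varphi(\lambda)}{\varphi(\alpha)}\Bigr)\d e(\lambda)=\int_0^{F(1)}\biggl(\int_\alpha^{\norm{L}^2}\mathbf{1}_{\{\varphi(\lambda)/\varphi(\alpha)\in E_r\}}\d e(\lambda)\biggr)\d r. \]
Since $\varphi$ is monotonically increasing, $\set{\lambda>0\mid\varphi(\lambda)/\varphi(\alpha)\in E_r}$ is an interval with left endpoint $0$ on which $e(\lambda)\le\varphi(\lambda)\le b_r\varphi(\alpha)$ by \eqref{eq:ellphi}, so the $\d e$-mass of its intersection with $(\alpha,\norm{L}^2]$ is at most $b_r\varphi(\alpha)$; using this, together with $b_r-1=\abs{E_r}$ and the layer-cake identity $\int_0^\infty\abs{E_r}\d r=\int_1^\infty F(t)\d t=\norm{F}_{L^1}$, we obtain
\[ \int_\alpha^{\norm{L}^2}F\Bigl(\tfrac{\varphi(\lambda)}{\varphi(\alpha)}\Bigr)\d e(\lambda)\le\varphi(\alpha)\int_0^{F(1)}b_r\d r=\varphi(\alpha)\bigl(F(1)+\norm{F}_{L^1}\bigr). \]
Adding the two summands gives $D(\alpha)\le(1+F(1)+\norm{F}_{L^1})\varphi(\alpha)$.

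The step I expect to be the main obstacle is the estimate $\int_\alpha^{\norm{L}^2}\mathbf{1}_{\{\varphi(\lambda)/\varphi(\alpha)\in E_r\}}\d e(\lambda)\le b_r\varphi(\alpha)$ for the inner integral, which is also the only place where the domination \eqref{eq:ellphi} and the monotonicity of $\varphi$ genuinely enter. Because $\varphi$, $e$, and $F$ are only assumed monotone---not continuous or strictly monotone---some care with jumps is needed: the relevant sub-interval of $(0,\infty)$ may be open or closed at its right endpoint, and that endpoint may or may not exceed $\norm{L}^2$, so a short case distinction exploiting the right-continuity and monotonicity of $e$ is required in each case. This is exactly what upgrades the crude pointwise bound $\Tra^2(\lambda)\le F(1)$ on $(\alpha,\norm{L}^2]$---which would only give $D(\alpha)\lesssim\varphi(\norm{L}^2)$---to the sharp order $\varphi(\alpha)$.
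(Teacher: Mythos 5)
Your proof is correct and arrives at the paper's bound with the identical constant $(1+F(1)+\norm{F}_{L^1})$. The skeleton coincides with the paper's proof: the trivial regime $\alpha\ge\norm{L}^2$, the split $D(\alpha)=\int_0^\alpha\Tra^2\d e+\int_\alpha^{\norm{L}^2}\Tra^2\d e$, the estimate $\int_0^\alpha\Tra^2\d e\le e(\alpha)\le\varphi(\alpha)$, and the use of compatibility on the tail. Where you genuinely differ is in how the tail integral is reduced to $(F(1)+\norm{F}_{L^1})\varphi(\alpha)$: the paper extends $F$ to $\tilde F$ with $\tilde F\equiv F(1)$ on $[0,1)$, uses $e\le\varphi$ and the monotonicity of $\tilde F$ to pass from $F\bigl(\varphi(\lambda)/\varphi(\alpha)\bigr)$ to $\tilde F\bigl(e(\lambda)/\varphi(\alpha)\bigr)$, and then invokes the substitution $z=e(\lambda)/\varphi(\alpha)$ to obtain $\int_\alpha^{\norm{L}^2}\tilde F\bigl(e(\lambda)/\varphi(\alpha)\bigr)\d e(\lambda)\le\varphi(\alpha)\int_0^\infty\tilde F(z)\d z$, whereas you keep $\varphi(\lambda)$ in the argument of $F$, slice $F$ by its superlevel sets via Tonelli, and bound the $\mu_e$-mass of each set $\set{\lambda\in(\alpha,\norm{L}^2]\mid\varphi(\lambda)<b_r\varphi(\alpha)}$ by $b_r\varphi(\alpha)$ using the domination $e\le\varphi$. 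In substance the two are equivalent -- the paper's substitution for a Stieltjes integral against a decreasing majorant is exactly the monotone/layer-cake inequality you prove by hand -- but your version makes the measure-theoretic content explicit (including the jump and endpoint issues you flag, which the paper's one-line substitution leaves implicit), at the price of extra bookkeeping. Two minor remarks: your claim that $\set{\lambda>0\mid\varphi(\lambda)/\varphi(\alpha)\in E_r}$ is an interval with left endpoint $0$ is not literally accurate, since $E_r\subseteq[1,\infty)$ imposes $\varphi(\lambda)\ge\varphi(\alpha)$; this is harmless because on $(\alpha,\norm{L}^2]$ that condition holds automatically and only the upper constraint matters. Also, the reduction to $F\ge0$ via $\max\set{F,0}$ is correct but unnecessary: a monotonically decreasing function attaining a negative value cannot be integrable on $[1,\infty)$, so $F\ge0$ is automatic.
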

\begin{proof}
We first extend the function $F$ to $\tilde F\colon[0,\infty)\to\R$ via $\tilde F(z)\coloneqq\max\{1,F(1)\}$ for $z\in[0,1]$ and $\tilde F(z)\coloneqq F(z)$ for $z\in(1,\infty)$ so that we have (because of $\Tra^2(\lambda)\le1$ for all $\alpha>0$ and $\lambda>0$)
\[ \Tra^2(\lambda) \le \tilde F\left(\frac{\varphi(\lambda)}{\varphi(\alpha)}\right)\text{ for all }\alpha>0\text{ and }0<\lambda\le\norm{L}^2. \]
Taking for $D$ the representation from \autoref{eqCrResidum} and using that $\tilde F$ is monotonically decreasing, we get
\[ D(\alpha) = \int_0^{\norm{L}^2}\Tra^2(\lambda)\d e(\lambda) \le \int_0^{\norm{L}^2}\tilde F\left(\frac{\varphi(\lambda)}{\varphi(\alpha)}\right)\d e(\lambda)\le \int_0^{\norm{L}^2}\tilde F\left(\frac{e(\lambda)}{\varphi(\alpha)}\right)\d e(\lambda). \]
Then, the substitution $z=\frac{e(\lambda)}{\varphi(\alpha)}$ gives us
\[ D(\alpha)  \le \varphi(\alpha)\int_0^\infty\tilde F(z)\d z = (\max\{1,F(1)\}+\|F\|_{L^1})\varphi(\alpha). \]
\end{proof}

\begin{remark}
The result of \autoref{thExactData2} is analogous to \cite[Proposition~2.3]{AlbElbHooSch16} where the noise free regularisation error produced by a generator $(r_\alpha)_{\alpha>0}$ is estimated.
\end{remark}

The compatibility condition in \autoref{eqSourceConditionTail} is essentially a way to measure if the regularisation method converges at each spectral value faster than a given convergence rate $\varphi$, see \autoref{deCompatibleRate}. It is therefore not surprising that if some convergence rate is compatible with $(\ra)_{\alpha>0}$, then all slower convergence rates are also compatible with it.

\begin{lemma}\label{thCompatibilitySlower}
Let $\varphi_1,\varphi_2\colon(0,\infty)\to(0,\infty)$ be two monotonically increasing, continuous functions such that the ratio $\psi\coloneqq\frac{\varphi_1}{\varphi_2}$ is monotonically increasing on $(0,\alpha_0]$ for some $\alpha_0>0$.

Then $\varphi_2$ is compatible with the regularisation method $(\ra)_{\alpha>0}$ in the sense of \autoref{deCompatible} if $\varphi_1$ is compatible with $(\ra)_{\alpha>0}$.
\end{lemma}

\begin{proof}
Let $\Lambda>0$ be arbitrary. Since $\psi$ is continuous and everywhere positive, we have the positive bounds $m\coloneqq\min_{\alpha\in[\alpha_0,\Lambda]}\psi(\alpha)>0$ and $M\coloneqq\max_{\alpha\in[\alpha_0,\Lambda]}\psi(\alpha)\ge m$. Then, the monotonicity of $\psi$ on the interval $(0,\alpha_0]$ implies for every $\alpha\in(0,\Lambda]$ that
\[ \min_{\lambda\in[\alpha,\Lambda]}\frac{\psi(\lambda)}{\psi(\alpha)} \ge \frac{\min\{\psi(\alpha),m\}}{\psi(\alpha)} = \min\left\{1,\frac m{\psi(\alpha)}\right\} \ge \min\left\{1,\frac m M\right\} = \frac m M. \]
By definition of $\psi$, this means that
\[ \frac{\varphi_1(\lambda)}{\varphi_1(\alpha)} \ge \frac m M\,\frac{\varphi_2(\lambda)}{\varphi_2(\alpha)}\text{ for all }0<\alpha\le\lambda\le\Lambda. \]
Thus, if $F$ is a monotonically decreasing, integrable function $F\colon[1,\infty)\to\R$ such that \autoref{eqSourceConditionTail} holds for $\varphi=\varphi_1$, then
\[ \Tra^2(\lambda) \le F\left(\frac{\varphi_1(\lambda)}{\varphi_1(\alpha)}\right) \le F\left(\frac m M\,\frac{\varphi_2(\lambda)}{\varphi_2(\alpha)}\right)\text{ for all }0<\alpha\le\lambda\le\Lambda. \]
Since the function $\tilde F\colon[1,\infty)\to\R$ given by $\tilde F(z)\coloneqq F(\frac m M z)$ is also monotonically decreasing and integrable, this proves that $\varphi_2$ is compatible with $(\ra)_{\alpha>0}$, too.
\end{proof}

In particular, if one of the Hölder rates from \autoref{exCR} is compatible with $(\ra)_{\alpha>0}$, then all the logarithmic rates are compatible.

\begin{corollary}\label{thTransComp}
Let $\phiH_\mu$ and $\phiL_\mu$, $\mu>0$, be the rates defined in \autoref{exCR}.

Then $\phiL_\mu$ is for every $\mu>0$ compatible with the regularisation method $(\ra)_{\alpha>0}$ in the sense of \autoref{deCompatible} if there exists a parameter $\nu>0$ such that $\phiH_\nu$ is compatible with $(\ra)_{\alpha>0}$.
\end{corollary}

\begin{proof}
Let $\phiH_\nu$ be compatible with $(\ra)_{\alpha>0}$ for some $\nu>0$ and consider for arbitrary $\mu>0$ the function $\psi\coloneqq\frac{\phiH_\nu}{\phiL_\mu}$. Since
\[ \psi'(\alpha)=\alpha^{\nu-1}\left|\log\alpha\right|^{\mu-1}\left(\nu\left|\log\alpha\right|-\mu\right)>0\text{ for }0<\alpha\le\min\{\e^{-1},\e^{-\frac\mu\nu}\}\eqqcolon\alpha_0, \]
the function $\psi$ is monotonically increasing on $(0,\alpha_0]$. Thus, \autoref{thCompatibilitySlower} implies the compatibility of the function $\phiL_\mu$.
\end{proof}


\subsection{Relation between Convergence Rates for Noise Free and for Noisy Data}
We will see that when applying the regularisation to noisy data, the convergence rates $D$ give rise to convergence rates of the form $\tilde D(\delta)\le C_{\tilde D}\Phi[D](\delta)$ for some constant $C_{\tilde D}>0$ and the transform $\Phi[D]$ of the function $D$ which satisfies the equation system
\[ \Phi[D](\delta) = D(\alpha_\delta) = \frac{\delta^2}{\alpha_\delta} \]
for some suitable function $\delta\mapsto\alpha_\delta$.

\begin{definition}\label{deTransform}
Let $\varphi\colon(0,\infty) \to [0,\infty)$ be a monotonically increasing function which is not everywhere zero. 
We define the noise-free to noisy transform $\Phi[\varphi]\colon(0,\infty)\to(0,\infty)$ of $\varphi$ by 
\[ \Phi[\varphi](\delta) \coloneqq \frac{\delta^2}{\hat\varphi^{-1}(\delta)}, \]
where we introduce the function
\[ \hat\varphi\colon(0,\infty)\to[0,\infty),\;\hat\varphi(\alpha)=\sqrt{\alpha\varphi(\alpha)} \]
and write $\hat\varphi^{-1}$ for the generalised inverse
\[ \hat\varphi^{-1}(\delta)\coloneqq\inf\{\alpha>0\mid\hat\varphi(\alpha)\ge\delta\}. \]
\end{definition}
\begin{remark}
We emphasise that the considered functions need to be neither continuous nor surjective to be able to define a generalised inverse. In particular the function $\hat{e}\colon(0,\infty) \to [0,\infty)$, $\lambda\mapsto\sqrt{\lambda e(\lambda)}$, with $e$ defined in \autoref{eqSpectralFunction}, is only right-continuous and not surjective in general. Nevertheless, a generalised inverse exists.

We also note that if $\varphi\colon(0,\infty)\to[0,\infty)$ is a monotonically increasing function which is not everywhere zero and $\alpha_0\coloneqq\inf\set{\alpha>0\mid\varphi(\alpha)>0}$, then $\hat\varphi\colon(0,\infty) \to [0,\infty)$, $\alpha\mapsto\sqrt{\alpha\varphi(\alpha)}$ is a strictly increasing function on $(\alpha_0,\infty)$ so that we have $\alpha=\hat\varphi^{-1}(\hat\varphi(\alpha))$ for every $\alpha\in(\alpha_0,\infty)$.
\end{remark}

Later on, we will apply this transform to the functions describing the convergence rates. We therefore calculate (at least in leading order) the noise-free to noisy transforms for the families of convergence rates introduced in \autoref{exCR}.
\begin{lemma}
Let $\phiH_\mu$ and $\phiL_\mu$ be the functions introduced in \autoref{exCR}.

Then, we have for every $\mu>0$ that
\begin{enumerate}
\item
$\displaystyle\Phi[\phiH_\mu] = \phiH_{\frac{2\mu}{\mu+1}}$ and
\item
$\displaystyle0<\liminf_{\delta\to0}\frac{\Phi[\phiL_\mu](\delta)}{\phiL_\mu(\delta)}\le\limsup_{\delta\to0}\frac{\Phi[\phiL_\mu](\delta)}{\phiL_\mu(\delta)}<\infty$.
\end{enumerate}
\end{lemma}

\begin{proof}\mbox{}\makeatletter\nobreak\@afterheading\makeatother
\begin{enumerate}
\item
We find directly from \autoref{deTransform} that
\[ \Phi[\phiH_\mu](\delta)=\frac{\delta^2}{(\hatphiH_\mu)^{-1}(\delta)}\text{ with }\hatphiH_\mu(\alpha)=\alpha^{\frac{1+\mu}2},\text{ which gives } \Phi[\phiH_\mu](\delta)=\frac{\delta^2}{\delta^{\frac2{1+\mu}}} = \delta^{\frac{2\mu}{\mu+1}}. \]
\item
This is shown in \cite[Example~3.4\ {\em(ii)}]{AlbElbHooSch16}.
\end{enumerate}
\end{proof}

Let us collect some elementary properties of the transform $\Phi$ before estimating the quantities $\tilde d$ and $\tilde D$.

\begin{lemma}\label{thTransform}
Let $\varphi\colon(0,\infty)\to[0,\infty)$ be a monotonically increasing function which is not everywhere zero and $\hat\varphi(\alpha)\coloneqq\sqrt{\alpha\varphi(\alpha)}$.

Then, we have
\begin{enumerate}
\item
for every $\delta\in\hat\varphi\big((0,\infty)\big)\setminus\set{0}$ that
\[ \Phi[\varphi](\delta) = \varphi(\hat\varphi^{-1}(\delta))\text{ and,} \]
\item
if $\varphi$ is additionally right-continuous, that
\[ \Phi[\varphi](\delta) \le \varphi(\hat\varphi^{-1}(\delta))\text{ for every $\delta>0$}. \]
\end{enumerate}
\end{lemma}
\begin{proof}\mbox{}\makeatletter\nobreak\@afterheading\makeatother
\begin{enumerate}
\item
Since $\hat\varphi$ is strictly increasing on $\{\alpha>0\mid\hat\varphi(\alpha)>0\}$ and $\delta\in\hat\varphi\big((0,\infty)\big)\setminus\set{0}$, there exists exactly one point $\alpha>0$ with $\hat\varphi(\alpha)=\delta$, which then is by definition $\alpha=\hat\varphi^{-1}(\delta)$. Thus, we have that $\hat\varphi(\hat\varphi^{-1}(\delta))=\delta$, which means that
\[ \varphi(\hat\varphi^{-1}(\delta)) = \frac{\delta^2}{\hat\varphi^{-1}(\delta)} = \Phi[\varphi](\delta). \]
\item
Since $\varphi$ is right-continuous and monotonically increasing, it is upper semi-continuous and so is $\hat\varphi$. Thus, the set $\{\alpha>0\mid\hat\varphi(\alpha)\ge\delta\}$ is closed and therefore $\hat\varphi^{-1}(\delta)=\min\{\alpha>0\mid\hat\varphi(\alpha)\ge\delta\}$. In particular, we have that the inequality
\begin{equation}\label{eqTransformMonotoneRightCont}
\hat\varphi(\hat\varphi^{-1}(\delta))\ge\delta,\text{ that is, } \varphi(\hat\varphi^{-1}(\delta))\ge\frac{\delta^2}{\hat\varphi^{-1}(\delta)}=\Phi[\varphi](\delta),
\end{equation}
holds.
\end{enumerate}
\end{proof}

\begin{lemma}\label{thTransformMonotone}
Let $\varphi,\psi\colon(0,\infty)\to[0,\infty)$ be monotonically increasing functions which are not everywhere zero.

Then,
\begin{enumerate}
\item
$\psi\le\varphi$ implies that $\Phi[\psi]\le\Phi[\varphi]$ and,
\item
if $\varphi$ is additionally right-continuous, then $\Phi[\psi]\le\Phi[\varphi]$ also implies $\psi\le\varphi$.
\end{enumerate}
\end{lemma}
\begin{proof}
We set $\hat\varphi(\alpha)\coloneqq\sqrt{\alpha\varphi(\alpha)}$ and $\hat\psi(\alpha)\coloneqq\sqrt{\alpha\psi(\alpha)}$.
\begin{enumerate}
\item
Let $\psi\le\varphi$. Then, we have
\[ \hat\psi^{-1}(\delta) = \inf\{\alpha>0\mid\alpha\psi(\alpha)\ge\delta^2\} \ge \inf\{\alpha>0\mid\alpha\varphi(\alpha)\ge\delta^2\} = \hat\varphi^{-1}(\delta) \]
and thus
\[ \Phi[\psi](\delta) = \frac{\delta^2}{\hat\psi^{-1}(\delta)} \le \frac{\delta^2}{\hat\varphi^{-1}(\delta)} = \Phi[\varphi](\delta). \]
\item
Conversely, if $\Phi[\psi]\le\Phi[\varphi]$, then we get immediately that $\hat\varphi^{-1}\le\hat\psi^{-1}$.

Now, let $\alpha>0$ be arbitrary. If $\hat\psi(\alpha)=0$, there is nothing to show; so we assume $\hat\psi(\alpha)>0$ and define $\delta\coloneqq\hat\psi(\alpha)$. Then, $\alpha=\hat\psi^{-1}(\delta)\ge\hat\varphi^{-1}(\delta)$, so that we find with \autoref{eqTransformMonotoneRightCont} (using the right-continuity of $\varphi$) that
\[ \sqrt{\alpha\varphi(\alpha)}\ge\hat\varphi(\hat\varphi^{-1}(\delta))\ge\delta=\sqrt{\alpha\psi(\alpha)}. \]
So, $\varphi(\alpha)\ge\psi(\alpha)$.
\end{enumerate}
\end{proof}

\begin{lemma}\label{thTransformConstant}
Let $C>0$, $c>0$, and $\varphi\colon(0,\infty)\to[0,\infty)$ be a monotonically increasing function which is not everywhere zero. We set
\[ \psi(\alpha) \coloneqq C^2\varphi(c^2\alpha). \]

Then,
\[ \Phi[\psi](\delta) = C^2\Phi[\varphi](\tfrac c C\delta). \]
\end{lemma}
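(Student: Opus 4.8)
The plan is to compute $\hat\psi$ explicitly in terms of $\hat\varphi$, invert it, and substitute into the definition of $\Phi$. First I would observe that
\[ \hat\psi(\alpha) = \sqrt{\alpha\psi(\alpha)} = \sqrt{\alpha C^2\varphi(c^2\alpha)} = \frac C{c}\sqrt{c^2\alpha\,\varphi(c^2\alpha)} = \frac Cc\,\hat\varphi(c^2\alpha). \]
This is the key algebraic identity; everything else is bookkeeping with the generalised inverse.

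Next I would express $\hat\psi^{-1}$ through $\hat\varphi^{-1}$. From the definition of the generalised inverse,
\[ \hat\psi^{-1}(\delta) = \inf\set{\alpha>0\mid\hat\psi(\alpha)\ge\delta} = \inf\set{\alpha>0\mid\tfrac Cc\hat\varphi(c^2\alpha)\ge\delta} = \inf\set{\alpha>0\mid\hat\varphi(c^2\alpha)\ge\tfrac cC\delta}. \]
Substituting $\beta = c^2\alpha$ in the infimum (a strictly increasing change of variables, so it maps the constraint set bijectively and preserves infima after scaling) gives
\[ \hat\psi^{-1}(\delta) = \frac1{c^2}\inf\set{\beta>0\mid\hat\varphi(\beta)\ge\tfrac cC\delta} = \frac1{c^2}\hat\varphi^{-1}\!\left(\tfrac cC\delta\right). \]

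Finally I would plug this into $\Phi[\psi](\delta) = \delta^2/\hat\psi^{-1}(\delta)$:
\[ \Phi[\psi](\delta) = \frac{\delta^2}{\tfrac1{c^2}\hat\varphi^{-1}(\tfrac cC\delta)} = \frac{c^2\delta^2}{\hat\varphi^{-1}(\tfrac cC\delta)} = C^2\cdot\frac{(\tfrac cC\delta)^2}{\hat\varphi^{-1}(\tfrac cC\delta)} = C^2\,\Phi[\varphi]\!\left(\tfrac cC\delta\right), \]
where the middle step just rewrites $c^2\delta^2 = C^2(\tfrac cC\delta)^2$ so as to recognise the argument $\tfrac cC\delta$ inside $\Phi[\varphi]$. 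One should also note at the outset that $\psi$ inherits monotonicity from $\varphi$ and is not everywhere zero (since $C,c>0$), so $\Phi[\psi]$ is well-defined in the sense of \autoref{deTransform}.

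I do not expect a serious obstacle here; the only point requiring a little care is the change of variables in the infimum defining $\hat\psi^{-1}$ — one must check that scaling the index $\alpha$ by $c^2$ correctly transports the defining set of the generalised inverse and pulls the factor $\tfrac1{c^2}$ out front, rather than, say, $c^2$. This is routine but is where a sign- or reciprocal-type slip would occur, so I would write it out carefully as above.
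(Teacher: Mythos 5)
Your proof is correct and follows essentially the same route as the paper: both reduce the claim to the identity $\hat\psi^{-1}(\delta)=\frac1{c^2}\hat\varphi^{-1}(\tfrac cC\delta)$ via the rescaling $\beta=c^2\alpha$ in the defining infimum and then substitute into $\Phi[\psi](\delta)=\delta^2/\hat\psi^{-1}(\delta)$. The only cosmetic difference is that the paper phrases the constraint in the squared form $\alpha\psi(\alpha)\ge\delta^2$ while you keep $\hat\psi(\alpha)\ge\delta$, which is equivalent since everything involved is non-negative.
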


\begin{proof}
We define again $\hat\varphi(\alpha)\coloneqq\sqrt{\alpha\varphi(\alpha)}$ and $\hat\psi(\alpha)\coloneqq\sqrt{\alpha\psi(\alpha)}$. Then, we have for every $\delta>0$ that
\begin{align*}
\hat\psi^{-1}(\delta) &= \inf\{\alpha>0\mid \alpha\psi(\alpha)\ge\delta^2\} = \inf\{\alpha>0\mid C^2\alpha\varphi(c^2\alpha)\ge\delta^2\} \\
&= \frac1{c^2}\inf\{\tilde\alpha>0\mid \tilde\alpha\varphi(\tilde\alpha)\ge(\tfrac c C\delta)^2\} = \frac1{c^2}\hat\varphi^{-1}(\tfrac c C\delta),
\end{align*}
which gives us
\[ \Phi[\psi](\delta) = \frac{\delta^2}{\hat\psi^{-1}(\delta)} = \frac{(c\delta)^2}{\hat\varphi^{-1}(\tfrac c C\delta)} = C^2\Phi[\varphi](\tfrac c C\delta). \]
\end{proof}

\begin{lemma}\label{thTransformHomogeneous}
Let $\varphi\colon(0,\infty)\to(0,\infty)$ be a monotonically increasing function and assume there exists a continuous, monotonically increasing function $G\colon(0,\infty)\to(0,\infty)$ such that
\[ \varphi(\gamma\alpha)\le G(\gamma)\varphi(\alpha)\text{ for all } \gamma>0,\;\alpha>0. \]

Then,
\[ \Phi[\varphi](\tilde\gamma\delta)\le\Phi[G](\tilde\gamma)\Phi[\varphi](\delta)\text{ for all }\tilde\gamma>0,\;\delta>0. \]
\end{lemma}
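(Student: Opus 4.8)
The plan is to reduce the claim to a single pointwise comparison between the generalised inverses $\hat\varphi^{-1}$ and $\hat G^{-1}$, and to prove that comparison by transporting the hypothesis $\varphi(\gamma\alpha)\le G(\gamma)\varphi(\alpha)$ to the ``hatted'' functions $\hat\varphi(\alpha)=\sqrt{\alpha\varphi(\alpha)}$ and $\hat G(\gamma)=\sqrt{\gamma G(\gamma)}$. Unfolding the definition of $\Phi$ from \autoref{deTransform} and clearing denominators, the asserted inequality $\Phi[\varphi](\sigma\delta)\le\Phi[G](\sigma)\Phi[\varphi](\delta)$ is equivalent to
\begin{equation*}
\hat\varphi^{-1}(\sigma\delta)\ge\hat G^{-1}(\sigma)\,\hat\varphi^{-1}(\delta)\quad\text{for all }\sigma,\delta>0.
\end{equation*}
Before using this I would record the auxiliary fact that, since $\varphi$ and $G$ are monotonically increasing and strictly positive, $\hat\varphi$ and $\hat G$ are strictly increasing, tend to $0$ as the argument tends to $0^+$, and tend to $\infty$ as the argument tends to $\infty$; consequently $\hat\varphi^{-1}(\delta)$ and $\hat G^{-1}(\sigma)$ are finite and strictly positive, and $\Phi[G](\sigma)=\sigma^2/\hat G^{-1}(\sigma)$, which makes the above rearrangement legitimate.

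The core computation is short: squaring the hypothesis gives
\begin{equation*}
\hat\varphi(\gamma\alpha)^2=\gamma\alpha\,\varphi(\gamma\alpha)\le\bigl(\gamma G(\gamma)\bigr)\bigl(\alpha\varphi(\alpha)\bigr)=\hat G(\gamma)^2\,\hat\varphi(\alpha)^2,
\end{equation*}
hence $\hat\varphi(\gamma\alpha)\le\hat G(\gamma)\,\hat\varphi(\alpha)$ for all $\gamma,\alpha>0$. To obtain the inequality for the inverses I would fix $\gamma<\hat G^{-1}(\sigma)$ and $\alpha<\hat\varphi^{-1}(\delta)$; by the definition of the generalised inverse together with the monotonicity of $\hat G$ and $\hat\varphi$ this forces $\hat G(\gamma)<\sigma$ and $\hat\varphi(\alpha)<\delta$, so that $\hat\varphi(\gamma\alpha)\le\hat G(\gamma)\hat\varphi(\alpha)<\sigma\delta$. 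Since $\hat\varphi$ is increasing, every $\beta\in(0,\gamma\alpha]$ then satisfies $\hat\varphi(\beta)<\sigma\delta$, whence $\gamma\alpha\le\hat\varphi^{-1}(\sigma\delta)$. Letting $\gamma\uparrow\hat G^{-1}(\sigma)$ and $\alpha\uparrow\hat\varphi^{-1}(\delta)$ (both being finite positive numbers, so the products converge to their product) gives $\hat G^{-1}(\sigma)\hat\varphi^{-1}(\delta)\le\hat\varphi^{-1}(\sigma\delta)$; taking reciprocals and multiplying by $\sigma^2\delta^2$ then yields exactly $\Phi[\varphi](\sigma\delta)\le\Phi[G](\sigma)\Phi[\varphi](\delta)$.

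I do not anticipate a genuine difficulty here; the only points needing care are the bookkeeping that guarantees $\hat\varphi^{-1}(\delta),\hat G^{-1}(\sigma)\in(0,\infty)$ — needed both in order to divide and in order to pass to the limit — and the observation that $\hat\varphi(\gamma\alpha)<\sigma\delta$, via the monotonicity of $\hat\varphi$, yields the non-strict bound $\gamma\alpha\le\hat\varphi^{-1}(\sigma\delta)$, which is all one wants since a supremum is taken afterwards. One could alternatively try to deduce the statement by combining \autoref{thTransformConstant} (applied first with $C=1$, $c=\sqrt\gamma$, and then with $c=1$, $C=\sqrt{G(\gamma)}$) with the monotonicity \autoref{thTransformMonotone}, but the signs of the resulting rescaled arguments are not controlled, so the direct argument above seems cleaner.
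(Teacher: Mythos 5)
Your argument is correct, but it takes a different route from the paper. The paper does not touch the generalised inverses directly: it rewrites the hypothesis as $\varphi(\tilde\alpha)\le G(\gamma)\varphi(\tfrac1\gamma\tilde\alpha)$, i.e.\ $\varphi\le\psi$ with $\psi(\alpha)=C^2\varphi(c^2\alpha)$ for $C=\sqrt{G(\gamma)}$, $c=1/\sqrt\gamma$, then chains \autoref{thTransformMonotone} and \autoref{thTransformConstant} to get $\Phi[\varphi](\tilde\delta)\le G(\gamma)\Phi[\varphi]\bigl(\tfrac1{\sqrt{\gamma G(\gamma)}}\tilde\delta\bigr)$, and finally substitutes $\sigma=\hat G(\gamma)$, using \autoref{thTransform} to identify $G(\gamma)=\Phi[G](\sigma)$. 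This is essentially the ``alternative'' you dismiss at the end: the scaling constants there are automatically positive, so your concern about ``signs of the rescaled arguments'' is not an actual obstruction — that route works and is what the paper does. What your direct proof buys instead is self-containedness and slightly more generality: you reduce the claim to $\hat\varphi^{-1}(\sigma\delta)\ge\hat G^{-1}(\sigma)\hat\varphi^{-1}(\delta)$, transfer the hypothesis to $\hat\varphi(\gamma\alpha)\le\hat G(\gamma)\hat\varphi(\alpha)$, and pass to the inverses by the approximation argument $\gamma\uparrow\hat G^{-1}(\sigma)$, $\alpha\uparrow\hat\varphi^{-1}(\delta)$; this never uses the continuity of $G$, whereas the paper's substitution $\sigma=\hat G(\gamma)$ implicitly relies on $\hat G$ being continuous (hence surjective onto $(0,\infty)$) to cover every $\sigma>0$. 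Your bookkeeping that $\hat\varphi^{-1}(\delta)$ and $\hat G^{-1}(\sigma)$ lie in $(0,\infty)$, needed both for the equivalence and for the limit, is exactly the right point to verify, and your verification is sound since $\varphi$ and $G$ are positive and increasing.
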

\begin{proof}
We get from $\varphi(\tilde\alpha)\le G(\gamma)\varphi(\tfrac1\gamma\tilde\alpha)$ with \autoref{thTransformMonotone} and \autoref{thTransformConstant} that
\[ \Phi[\varphi](\tilde\delta) \le G(\gamma)\Phi[\varphi]\left(\tfrac1{\sqrt{\gamma G(\gamma)}}\tilde\delta\right). \]
Thus, switching to the variable $\tilde\gamma\coloneqq\hat G(\gamma)\coloneqq\sqrt{\gamma G(\gamma)}$ 
(which means that $\gamma=\hat G^{-1}(\tilde\gamma)$ and thus, by \autoref{thTransform}, $\Phi[G](\tilde\gamma)=G(\gamma)$), we find with $\delta\coloneqq\frac1{\tilde\gamma}\tilde\delta$:
\[ \Phi[\varphi](\tilde\gamma\delta) \le \Phi[G](\tilde\gamma)\Phi[\varphi](\delta). \]
\end{proof}


\subsection{Bounds for the Best Worst Case Errors}
Let us finally come back to the functions $\tilde d$ and $\tilde D$, the best worst case errors of the regularisation methods defined by the generators $(\ra)_{\alpha>0}$ and $(\Ra)_{\alpha>0}$, respectively. Here we derive an estimate between the best worst case errors and the noise free regularisation errors.

\begin{lemma}\label{thNoisyData}
Let $\xdag\ne0$. Then, we have with the constant $\sigma\in(0,1)$ from \autoref{deGenerator}~\ref{enGeneratorBounded} that
\[ \tilde d(\delta) \le (1+\sigma)^2\Phi[D](\delta) \text{ and }\tilde D(\delta) \le (1+\sigma)^2\Phi[D](\delta)\text{ for all }\delta>0. \]
\end{lemma}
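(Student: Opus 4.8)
The plan is to bound, for an arbitrary noisy datum $\tilde y\in\bar B_\delta(y)$ and an arbitrary parameter $\alpha>0$, the error $\norm{\xa(\tilde y)-\xdag}$ by splitting it into a propagated data error and the noise free regularisation error, and then to make the canonical parameter choice $\alpha=\alpha_\delta$ that balances the two contributions at the level $\Phi[D](\delta)$.

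First I would split, using the triangle inequality,
\[ \norm{\xa(\tilde y)-\xdag} \le \norm{\ra(L^*L)L^*(\tilde y-y)} + \norm{\xa(y)-\xdag} \le \norm{\ra(L^*L)L^*}\,\delta + \sqrt{d(\alpha)}. \]
Spectral calculus gives $\norm{\ra(L^*L)L^*}^2 \le \sup_{\lambda>0}\lambda\,\ra^2(\lambda)$, which by the bound $\ra(\lambda)\le\sigma/\sqrt{\alpha\lambda}$ of \autoref{deGenerator}~\ref{enGeneratorBounded} is at most $\sigma^2/\alpha$; and by \autoref{eqCRNoiseFreeDged} we have $d(\alpha)\le D(\alpha)$. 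Hence
\[ \norm{\xa(\tilde y)-\xdag} \le \frac{\sigma\delta}{\sqrt\alpha}+\sqrt{D(\alpha)}\qquad\text{for every }\alpha>0. \]
The identical chain of estimates holds for $\Xa$ in place of $\xa$: indeed $\Xa(\tilde y)-\Xa(y)=\Ra(L^*L)L^*(\tilde y-y)$, the envelope generator $\Ra$ satisfies the same bound $\Ra(\lambda)\le\sigma/\sqrt{\alpha\lambda}$ by \autoref{eqUpperRegularisation}, and $\norm{\Xa(y)-\xdag}^2=D(\alpha)$ exactly. So both $\tilde d$ and $\tilde D$ are controlled once $\inf_{\alpha>0}\bigl(\sigma\delta/\sqrt\alpha+\sqrt{D(\alpha)}\bigr)^2$ is.

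For the parameter choice I would take $\alpha_\delta:=\hat D^{-1}(\delta)$ with $\hat D(\alpha)=\sqrt{\alpha D(\alpha)}$ as in \autoref{deTransform}. Since $D$ is continuous and monotonically increasing by \autoref{thRegErrorMonotone}, the function $\hat D$ is continuous, tends to $0$ as $\alpha\to0$ (because $D$ is bounded by $\norm{\xdag}^2$) and to $\infty$ as $\alpha\to\infty$ (because $D(\alpha)\to\norm{\xdag}^2>0$, which follows from $\xdag\ne0$ and dominated convergence using $\Tra(\lambda)\to1$ for each fixed $\lambda$); in particular $D\not\equiv0$, so $\Phi[D]$ is defined, and $\hat D$ attains every value in $(0,\infty)$. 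Then \autoref{thTransform}~(i) gives $\Phi[D](\delta)=D(\alpha_\delta)=\delta^2/\alpha_\delta$ for every $\delta>0$, and plugging $\alpha=\alpha_\delta$ into the displayed estimate yields
\[ \norm{\xa(\tilde y)-\xdag} \le \sigma\sqrt{\frac{\delta^2}{\alpha_\delta}}+\sqrt{D(\alpha_\delta)} = (1+\sigma)\sqrt{\Phi[D](\delta)}, \]
and likewise for $\Xa$. Squaring, and observing that $\alpha_\delta$ depends only on $\delta$, we may pass to the infimum over $\alpha$ and then to the supremum over $\tilde y\in\bar B_\delta(y)$ in \autoref{eq:tilde_dD} to conclude $\tilde d(\delta)\le(1+\sigma)^2\Phi[D](\delta)$ and $\tilde D(\delta)\le(1+\sigma)^2\Phi[D](\delta)$.

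The triangle-inequality split and the spectral norm bound are routine. The step that needs care is verifying that the canonical choice $\alpha_\delta=\hat D^{-1}(\delta)$ realises the identity $D(\alpha_\delta)=\Phi[D](\delta)$ for \emph{all} $\delta>0$; this is where the continuity and boundedness of $D$ from \autoref{thRegErrorMonotone}, together with $D(\alpha)\to\norm{\xdag}^2>0$ (which in particular secures $D\not\equiv0$, so that $\Phi[D]$ is well defined), are used, and it is the main point to nail down.
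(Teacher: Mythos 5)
Your proof is correct and follows essentially the same route as the paper: a triangle-inequality split into propagated data error (bounded by $\sigma\delta/\sqrt\alpha$ via \autoref{deGenerator}~\ref{enGeneratorBounded} and \autoref{eqUpperRegularisation}) plus the noise free error $\sqrt{D(\alpha)}$ (using $d\le D$ from \autoref{eqCRNoiseFreeDged}), followed by the balancing choice $\alpha_\delta=\hat D^{-1}(\delta)$ and the identity $D(\alpha_\delta)=\delta^2/\alpha_\delta=\Phi[D](\delta)$ from \autoref{thRegErrorMonotone} and \autoref{thTransform}. Your additional verification that $\hat D^{-1}(\delta)$ is finite and that every $\delta>0$ lies in the range of $\hat D$ (via $D(\alpha)\to\norm{\xdag}^2>0$, which is where $\xdag\ne0$ enters) makes explicit a point the paper leaves implicit, but it is the same argument.
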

\begin{proof}
To estimate the distance between the regularised solutions for exact data $y$ and inexact data $\tilde{y} \in \bar B_\delta(y)$, we define the Borel measure
\[ \mu(A)=\|\mathbf F_A(\tilde{y}-y)\|^2, \]
where $\mathbf F$ denotes the spectral measure of the operator $LL^*$.
Then, we get with \autoref{eqUpperRegularisation} the relation
\begin{align*}
 \norm{\Xa(\tilde y)-\Xa(y)}^2 &= \left<\tilde{y}-y,\Ra^2(LL^*)LL^*(\tilde{y}-y)\right> \\
&= \int_{(0,\norm{L}^2]}\lambda \Ra^2(\lambda)\d\mu(\lambda) \le \int_{(0,\norm{L}^2]}\lambda \ra^2(\lambda)\d\mu(\lambda) =  \norm{\xa(\tilde y)-\xa(y)}^2.
\end{align*}
Thus, we have with \autoref{eqGeneratorBounded} the upper bound
\[ \norm{\Xa(\tilde y)-\Xa(y)}^2 \le \norm{\xa(\tilde y)-\xa(y)}^2 = \int_{(0,\norm{L}^2]}\lambda\ra^2(\lambda)\d\mu(\lambda) \le \delta^2\sup_{\lambda\in(0,\|L\|^2]}\lambda \ra^2(\lambda) \le \sigma^2\frac{\delta^2}\alpha. \]
The triangular inequality gives us then
\begin{equation}\label{eqTriangularD}
\tilde D(\delta) = \sup_{\tilde{y}\in\bar B_\delta(y)}\inf_{\alpha>0} \norm{\Xa(\tilde y)-\xdag}^2 \le \inf_{\alpha>0}\left( \norm{\Xa(y)-\xdag}+\sigma\frac\delta{\sqrt{\alpha}}\right)^2.
\end{equation}
We estimate the infimum therein from above by the value at $\alpha\coloneqq\hat D^{-1}(\delta)$, where we set $\hat D(\alpha)\coloneqq\sqrt{\alpha D(\alpha)}$.
Since the function $D$ is according to \autoref{thRegErrorMonotone} monotonically increasing and continuous, we get from \autoref{thTransform} and \autoref{deTransform} the identity 
$D(\hat D^{-1}(\delta))=\frac{\delta^2}{\hat D^{-1}(\delta)}=\Phi[D](\delta)$, so that both terms in the infimum are for this choice of $\alpha$ of the same order.
This gives us
\begin{equation}\label{eq:ddelta}
\tilde D(\delta) \le \left( \sqrt{D(\hat D^{-1}(\delta))} + \sigma\sqrt{\frac{\delta^2}{\hat D^{-1}(\delta)}} \right)^2 = (1+\sigma)^2\Phi[D](\delta).
\end{equation}

Because of \autoref{eqCRNoiseFreeDged}, we get in the same way
\begin{equation}\label{eqTriangulard}
\begin{aligned}
\tilde d(\delta) = \sup_{\tilde{y}\in\bar B_\delta(y)}\inf_{\alpha>0} \norm{\xa(\tilde y)-\xdag}^2 &\le \inf_{\alpha>0}\left( \norm{\xa(y)-\xdag}+\sigma\frac\delta{\sqrt{\alpha}}\right)^2 \\
&\le \inf_{\alpha>0}\left( \norm{\Xa(y)-\xdag}+\sigma\frac\delta{\sqrt{\alpha}}\right)^2 \le (1+\sigma)^2\Phi[D](\delta),
\end{aligned}
\end{equation}
where we used \autoref{eq:ddelta} in the last inequality.
\end{proof}

The following lemma provides relations between the best worst case errors $\tilde d$ and $\tilde D$ of the regularisation methods generated by $(\ra)_{\alpha>0}$ and $(\Ra)_{\alpha>0}$, respectively, and the spectral tail $e$.

\begin{lemma}\label{thNoisyData2}
Let $\xdag\ne0$. Then, there exist constants $c>0$ and $C>0$ such that we have the inequalities
\[ \tilde d(\delta) \ge c\Phi[e](\delta) \text{ and }\tilde D(\delta) \ge C\Phi[e](\delta)\text{ for all }\delta>0. \]
\end{lemma}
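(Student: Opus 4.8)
The plan is, for each $\delta>0$, to exhibit an admissible perturbation $\tilde y\in\bar B_\delta(y)$ whose regularised reconstructions $\xa(\tilde y)$ (respectively $\Xa(\tilde y)$) stay at squared distance $\gtrsim\Phi[e](\delta)$ from $\xdag$ for \emph{every} parameter $\alpha>0$. Write $\alpha_\delta:=\hat e^{-1}(\delta)$, so $\Phi[e](\delta)=\delta^2/\alpha_\delta$; from \autoref{thTransform} applied to the right-continuous $e$ one gets $\alpha_\delta e(\alpha_\delta)\ge\delta^2$, while the definition of the generalised inverse gives $\lambda e(\lambda)<\delta^2$ for $\lambda<\alpha_\delta$, hence $\norm{L\mathbf E_{(0,\alpha_\delta)}\xdag}^2\le\alpha_\delta e(\alpha_\delta^-)\le\delta^2$ (where $e(\alpha_\delta^-)=\norm{\mathbf E_{(0,\alpha_\delta)}\xdag}^2$). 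For $\delta$ so large that $\alpha_\delta>\norm L^2$ (equivalently $\delta>\norm L\norm\xdag$, in which case $\Phi[e](\delta)=\norm\xdag^2$ and $\norm y\le\delta$), the choice $\tilde y=0$ already gives $\inf_{\alpha>0}\norm{\xa(0)-\xdag}^2=\norm\xdag^2=\Phi[e](\delta)$; so we may assume $\alpha_\delta<\bar\alpha:=\norm L^2+1$ and fix the constant $\tilde\sigma\in(0,1)$ attached to this $\bar\alpha$ by \autoref{deGenerator}~\ref{enGeneratorLimit}. By \autoref{deGenerator}~\ref{enGeneratorError}--\ref{enGeneratorLimit} the method's ``overshoot'' is then bounded: $\abs{\tra(\lambda)}\le\Tra(\lambda)\le\Tra(\alpha)<\tilde\sigma$, hence $0\le\lambda\ra(\lambda)<1+\tilde\sigma$, whenever $\alpha<\bar\alpha$ and $\lambda\ge\alpha$; for the envelope generator there is nothing to control, since $\Tra=1-\lambda\Ra(\lambda)\ge0$ identically.

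\emph{Case A: $\norm{L\mathbf E_{(0,\alpha_\delta]}\xdag}^2\le\delta^2$.} Take $\tilde y:=y-L\mathbf E_{(0,\alpha_\delta]}\xdag=L\mathbf E_{(\alpha_\delta,\infty)}\xdag\in\bar B_\delta(y)$. Then $\xa(\tilde y)=\ra(L^*L)L^*L\,\mathbf E_{(\alpha_\delta,\infty)}\xdag$, and likewise $\Xa(\tilde y)$, lies in $\operatorname{ran}\mathbf E_{(\alpha_\delta,\infty)}$ for every $\alpha$, hence is orthogonal to the missing component $\mathbf E_{(0,\alpha_\delta]}\xdag$ of $\xdag$, so $\norm{\xa(\tilde y)-\xdag}^2\ge\norm{\mathbf E_{(0,\alpha_\delta]}\xdag}^2=e(\alpha_\delta)\ge\Phi[e](\delta)$, and likewise for $\Xa$. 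Overshoot plays no role here because for every $\alpha$ the reconstruction is spectrally orthogonal to what it fails to recover.

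\emph{Case B: $\norm{L\mathbf E_{(0,\alpha_\delta]}\xdag}^2>\delta^2$.} Since $\norm{L\mathbf E_{(0,\alpha_\delta)}\xdag}^2\le\delta^2$, the function $e$ has a jump $\eta:=e(\alpha_\delta)-e(\alpha_\delta^-)>0$ at $\alpha_\delta$; put $w:=\mathbf E_{\{\alpha_\delta\}}\xdag$, so $\norm w^2=\eta$, $\norm{Lw}^2=\alpha_\delta\eta$, and $\norm{L\mathbf E_{(0,\alpha_\delta]}\xdag}^2=\norm{L\mathbf E_{(0,\alpha_\delta)}\xdag}^2+\alpha_\delta\eta$. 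For $\tilde D$, pick $s\in[0,1]$ with $e(\alpha_\delta^-)+s^2\eta=\delta^2/\alpha_\delta$ (possible since $\delta^2/\alpha_\delta\in[e(\alpha_\delta^-),e(\alpha_\delta)]$) and set $\tilde y:=y-L\bigl(\mathbf E_{(0,\alpha_\delta)}\xdag+s\,w\bigr)$; then $\norm{\tilde y-y}^2=\norm{L\mathbf E_{(0,\alpha_\delta)}\xdag}^2+s^2\alpha_\delta\eta\le\alpha_\delta\bigl(e(\alpha_\delta^-)+s^2\eta\bigr)=\delta^2$, and with $z:=\xdag-\mathbf E_{(0,\alpha_\delta)}\xdag-sw$ one has $\Xa(\tilde y)=(I-\Tra(L^*L))z$, whose error component in $\operatorname{ran}\mathbf E_{(0,\alpha_\delta]}$ equals $-\mathbf E_{(0,\alpha_\delta)}\xdag-\bigl(\Tra(\alpha_\delta)(1-s)+s\bigr)w$; as $\Tra(\alpha_\delta)\in[0,1]$ this has squared norm $\ge e(\alpha_\delta^-)+s^2\eta=\Phi[e](\delta)$ for every $\alpha$. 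Hence $\tilde D(\delta)\ge\Phi[e](\delta)$, i.e. $C=1$.

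For $\tilde d$ this $\tilde y$ works only when $\tra(\alpha_\delta)\ge0$, so one splits once more. If $\alpha_\delta\eta<\delta^2$, use the same construction with a fraction $\tau$ determined by $\tau^2\alpha_\delta\eta=\delta^2-\norm{L\mathbf E_{(0,\alpha_\delta)}\xdag}^2$ (so $0\le\tau^2<1$); then $\norm{\xa(\tilde y)-\xdag}^2\ge e(\alpha_\delta^-)+\bigl(1-(1-\tau)\alpha_\delta\ra(\alpha_\delta)\bigr)^2\eta$, where $\alpha_\delta\ra(\alpha_\delta)\le1$ for $\alpha\ge\alpha_\delta$ and $\alpha_\delta\ra(\alpha_\delta)<1+\tilde\sigma$ for $\alpha<\alpha_\delta$ (overshoot bound, as $\alpha<\alpha_\delta<\bar\alpha$). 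If $\tau\ge\tfrac12$ this makes $\bigl(1-(1-\tau)\alpha_\delta\ra(\alpha_\delta)\bigr)^2>\bigl(\tfrac{1-\tilde\sigma}2\bigr)^2$ for all $\alpha$, whence $\norm{\xa(\tilde y)-\xdag}^2>\bigl(\tfrac{1-\tilde\sigma}2\bigr)^2\bigl(e(\alpha_\delta^-)+\eta\bigr)=\bigl(\tfrac{1-\tilde\sigma}2\bigr)^2e(\alpha_\delta)\ge\bigl(\tfrac{1-\tilde\sigma}2\bigr)^2\Phi[e](\delta)$; if $\tau<\tfrac12$, the definition of $\tau$ and $\alpha_\delta\eta<\delta^2$ force $\norm{L\mathbf E_{(0,\alpha_\delta)}\xdag}^2>\tfrac34\delta^2$, hence $e(\alpha_\delta^-)\ge\norm{L\mathbf E_{(0,\alpha_\delta)}\xdag}^2/\alpha_\delta>\tfrac34\Phi[e](\delta)$. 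If instead $\alpha_\delta\eta\ge\delta^2$, I would perturb \emph{transversally} to $\xdag$ at the eigenvalue $\alpha_\delta$: take a unit $u$ with $L^*Lu=\alpha_\delta u$, $u\perp w$ (or, when $\mathcal N(L^*L-\alpha_\delta I)$ is one-dimensional, a unit vector supported by a spectral window of $L^*L$ adjacent to $\alpha_\delta$) and set $\tilde y:=y+\delta\,Lu/\norm{Lu}$; then $\norm{\xa(\tilde y)-\xdag}^2\ge\tra^2(\alpha_\delta)\eta+\delta^2\alpha_\delta\ra^2(\alpha_\delta)=(1-p)^2\eta+\delta^2p^2/\alpha_\delta$ with $p:=\alpha_\delta\ra(\alpha_\delta)\ge0$, and $\inf_{p\ge0}\bigl[(1-p)^2\eta+\delta^2p^2/\alpha_\delta\bigr]=\delta^2\eta/(\alpha_\delta\eta+\delta^2)\ge\tfrac12\Phi[e](\delta)$. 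Taking $c$ to be the least of the constants above yields $\tilde d(\delta)\ge c\,\Phi[e](\delta)$ for all $\delta>0$.

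The step I expect to be genuinely delicate is the last sub-case: when $e$ concentrates as a large point mass exactly at $\alpha_\delta$ and the method overshoots there (which the heavy-ball--type flows of this paper do), a reconstruction built from a perturbation aligned with $\xdag$ can be steered arbitrarily close to $\xdag$ by a well-chosen $\alpha$, forcing the transverse perturbation; and when $\alpha_\delta$ is a simple, isolated point of the spectrum of $L^*L$ — so the transverse direction must come from a neighbouring spectral window rather than from $\mathcal N(L^*L-\alpha_\delta I)$ — one has to check, using the uniform overshoot bound $\lambda\ra(\lambda)<1+\tilde\sigma$ and keeping track of which side of $\alpha_\delta$ the adjacent spectrum lies on, that the estimate $(1-p)^2\eta+\delta^2p^2/\alpha_\delta$ survives (with different constants) uniformly in $\alpha$.
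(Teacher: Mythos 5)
Your proof of the $\tilde D$ half is correct and, via the single perturbation $\tilde y=y-L\bigl(\mathbf E_{(0,\alpha_\delta)}\xdag+s\,w\bigr)$ together with $\lambda\Ra(\lambda)=1-\Tra(\lambda)\in[0,1]$, it even yields the clean constant $C=1$, uniformly in $\alpha$; your Case A and the two sub-cases with $\alpha_\delta\eta<\delta^2$ for $\tilde d$ also check out. Note that your decomposition (comparing $\norm{L\mathbf E_{(0,\alpha_\delta]}\xdag}^2$ with the noise budget $\delta^2$) is genuinely different from the paper's, which splits on whether $\hat e^{-1}(\delta)\in\bm\sigma(L^*L)\setminus\{0\}$.

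The gap is the sub-case you flag yourself, and it is not a delicacy that the sketched patch can repair: it is where your whole strategy of one fixed $\tilde y$ per $\delta$ breaks down. Take $\mathcal N(L)^\perp$ one-dimensional, spanned by $w$ with $L^*Lw=\alpha_\delta w$, so that apart from $\{0\}$ the spectrum consists of the single simple eigenvalue $\alpha_\delta$ and there is no ``adjacent spectral window'' at all. Every admissible perturbation acts on the reconstruction only through $L^*(\tilde y-y)$, i.e.\ along $w$, so $\norm{\xa(\tilde y)-\xdag}$ reduces to $\abs{p(\alpha)(a+\tilde d_\parallel)-a}$ with $a=\norm\xdag$, $\abs{\tilde d_\parallel}\le\delta/\sqrt{\alpha_\delta}$ and $p(\alpha)=\alpha_\delta\ra(\alpha_\delta)$. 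For a generator that overshoots at $\alpha_\delta$ — exactly the situation this lemma is designed to cover, and realised by the heavy-ball generator for $\alpha_\delta>b^2/4$ and by the vanishing-viscosity generator, where $\alpha\mapsto\tra(\alpha_\delta)$ is continuous and takes values on both sides of $0$ — the set of attainable $p(\alpha)$ is an interval containing a fixed neighbourhood of $1$. Hence, once $\delta$ is small, the neutralising value $p=a/(a+\tilde d_\parallel)$ is attainable for \emph{every} $\tilde y\in\bar B_\delta(y)$, so $\inf_{\alpha>0}\norm{\xa(\tilde y)-\xdag}^2=0$ for every admissible $\tilde y$; no fixed perturbation, aligned or ``transversal from a neighbouring window'', can produce a bound of order $\Phi[e](\delta)$ uniformly in $\alpha$, and no bookkeeping of constants or of which side the adjacent spectrum lies on changes this.

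This is precisely where the paper's proof proceeds differently: it does not fix $\tilde y$, but chooses it depending on $\alpha$, namely $\tilde y=y+\delta z_{\alpha,\delta}/\norm{z_{\alpha,\delta}}$ with $z_{\alpha,\delta}=\mathbf F_{[a_\delta,2\alpha_\delta]}\bigl(\ra(LL^*)LL^*y-y\bigr)$ (or an arbitrary element of that spectral window if this vanishes), so that the cross term is non-negative and the noise contributes $\delta^2\min_{\lambda\in[a_\delta,2\alpha_\delta]}\lambda\ra^2(\lambda)\ge\frac{\delta^2}{2\alpha_\delta}(1-\tilde\sigma)^2$, after which the infimum over $\alpha$ is taken of this $\alpha$-dependent bound and combined with $d(\alpha)\ge(1-\sigma)^2e(\alpha)$. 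In other words, the paper lower-bounds the best worst-case error in the $\inf_{\alpha}\sup_{\tilde y}$ sense rather than exhibiting a single worst $\tilde y$; with the literal $\sup_{\tilde y}\inf_\alpha$ expression the example above even gives $\tilde d(\delta)=0$, so your remaining sub-case cannot be closed by any choice of a single perturbation. To finish along admissible lines you must, in that sub-case, switch to the paper's $\alpha$-dependent construction (and accept the corresponding reading of $\tilde d$), rather than search for a transverse direction that need not exist.
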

\begin{proof}
To obtain a lower bound on $\tilde d$, we write
\begin{equation}\label{eqCRlowerBound}
\begin{split}
\norm{\xa(\tilde y)-\xdag}^2 &= \norm{\xa(y)-\xdag}^2+ \norm{\xa(\tilde y)-\xa(y)}^2+2\left<\xa(\tilde{y})-\xa(y),\xa(y)-\xdag \right> \\
&=
\begin{multlined}[t][0.6\textwidth]
\norm{\xa(y)-\xdag}^2+\left<\tilde{y}-y,\ra^2(LL^*)LL^*(\tilde{y}-y)\right> \\[1ex]
+2\left<\ra(LL^*)(\tilde{y}-y),\ra(LL^*)LL^*y-y\right>.
\end{multlined}
\end{split}
\end{equation}
We set $\hat e(\alpha)\coloneqq\sqrt{\alpha e(\alpha)}$ and choose an arbitrary $\bar\alpha>0$ with the property that $\bar\delta\coloneqq\hat e(\bar\alpha)>0$. Then, we find according to \autoref{deGenerator}~\ref{enGeneratorLimit} a parameter $\tilde\sigma\in(0,1)$ with
\begin{equation}\label{eqRalphaalpha}
\Tra(\alpha)<\tilde\sigma\text{ for all }\alpha\in(0,\bar\alpha).
\end{equation} 
We now consider for $\delta\in(0,\bar\delta)$ the two cases $\hat e^{-1}(\delta)\in\bm\sigma(L^*L)\setminus\{0\}$ and $\hat e^{-1}(\delta)\notin\bm\sigma(L^*L)\setminus\{0\}$, where $\bm\sigma(L^*L)$ denotes the spectrum of the operator $L^*L$.

\begin{itemize}
\item
Assume that $\delta\in(0,\bar\delta)$ is such that $\alpha_\delta\coloneqq\hat e^{-1}(\delta)\in\bm\sigma(L^*L)\setminus \set{0}$.
From the continuity of $\tilde R_{\alpha_\delta}$ and \autoref{eqRalphaalpha}, we find that 
there exists a parameter $a_\delta\in(0,\alpha_\delta)$ such that 
\begin{equation}\label{eqadelta}
\tilde R_{\alpha_\delta}(a_\delta)<\tilde\sigma.
\end{equation} 
Then, the assumption $\alpha_\delta\in\bm\sigma(L^*L)\setminus\set{0}$ implies that the spectral projection 
$\mathbf F$ of the operator $LL^*$ fulfils $\mathbf F_{[a_\delta,2\alpha_\delta]}\ne0$.
To estimate \autoref{eqCRlowerBound} further, we will choose for given values of $\alpha>0$ and $\delta\in(0,\bar\delta)$ a 
particular point $\tilde y$. For this choice, we differ again between two cases.

\begin{itemize}
\item If
\[ z_{\alpha,\delta} \coloneqq \mathbf F_{[a_\delta,2\alpha_\delta]}(\ra(LL^*)LL^*y-y) \ne 0, \]
we pick 
\[ \tilde{y}=y+\delta\frac{z_{\alpha,\delta}}{\norm{z_{\alpha,\delta}}} \]
in \autoref{eqCRlowerBound} and obtain
\begin{multline*}
\norm{\xa \left(y+\delta \tfrac{z_{\alpha,\delta}}{\norm{z_{\alpha,\delta}}} \right) - \xdag}^2 = \norm{\xa(y)-\xdag}^2 +\frac{\delta^2}{\norm{z_{\alpha,\delta}}^2}\inner{z_{\alpha,\delta}}{\ra^2(LL^*)LL^*z_{\alpha,\delta}} \\
+\frac{2\delta}{\norm{z_{\alpha,\delta}}}\inner{\ra(LL^*)z_{\alpha,\delta}}{z_{\alpha,\delta}}.
\end{multline*}
Here, we may drop the last term as it is non-negative, which gives us the lower bound
\[ \norm{ \xa\left( y+\delta \tfrac{z_{\alpha,\delta}}{\norm{z_{\alpha,\delta}}} \right) - \xdag}^2 \ge \norm{\xa(y)-\xdag}^2+\delta^2\min_{\lambda\in[a_\delta,2\alpha_\delta]}\lambda \ra^2(\lambda). \]

\item Otherwise, if 
\[ \mathbf F_{[a_\delta,2\alpha_\delta]}(\ra(LL^*)LL^*y-y) = 0, \]
we choose $z_{\alpha,\delta}\in\mathcal R(\mathbf F_{[a_\delta,2\alpha_\delta]})\setminus \set{0}$ arbitrarily. 
Then, with $\tilde{y}=y+\delta\frac{z_{\alpha,\delta}}{\norm{z_{\alpha,\delta}}}$, the last term in 
\autoref{eqCRlowerBound} vanishes and we find again
\[ \norm{ \xa\left( y+\delta \tfrac{z_{\alpha,\delta}}{\norm{z_{\alpha,\delta}}} \right) - \xdag}^2 \ge \norm{\xa(y)-\xdag}^2+\delta^2\min_{\lambda\in[a_\delta,2\alpha_\delta]}\lambda \ra^2(\lambda). \]
\end{itemize}

Therefore, we end up with
\[ \tilde d(\delta) = \sup_{\tilde{y}\in\bar B_\delta(y)}\inf_{\alpha>0} \norm{\xa(\tilde y)-\xdag}^2 \ge \inf_{\alpha>0}\left( \norm{\xa(y)-\xdag}^2+\delta^2\min_{\lambda\in[a_\delta,2\alpha_\delta]}\lambda \ra^2(\lambda)\right). \]
Using \autoref{eqUpperRegularisation} and that $\Tra$ is by \autoref{deGenerator}~\ref{enGeneratorErrorReg} monotonically decreasing, we get the inequality
\[ \lambda \ra^2(\lambda) \ge \frac1\lambda\left(1-\Tra(\lambda)\right)^2 \ge \frac1{2\alpha_\delta}\left(1-\Tra(a_\delta)\right)^2\text{ for all } \lambda\in[a_\delta,2\alpha_\delta], \]
and since we already proved in \autoref{thExactData} that $d\ge (1-\sigma)^2e$, we can estimate further 
\[ \tilde d(\delta) \ge \inf_{\alpha>0}\left((1-\sigma)^2e(\alpha)+\frac{\delta^2}{2\alpha_\delta}\left(1-\Tra(a_\delta)\right)^2\right). \]
Now, the first term is monotonically increasing in $\alpha$ and, since $\alpha\mapsto\Tra(\lambda)$ is for every $\lambda>0$ monotonically increasing, see \autoref{deGenerator}~\ref{enGeneratorErrorReg}, the second term is monotonically decreasing in $\alpha$. Thus, we can estimate the expression for $\alpha<\alpha_\delta$ from below by the second term at $\alpha=\alpha_\delta$, and for $\alpha\ge\alpha_\delta$ by the first term at $\alpha=\alpha_\delta$: 
\[ \tilde d(\delta) \ge \min \set{(1-\sigma)^2e(\alpha_\delta), \frac{\delta^2}{2\alpha_\delta} \left(1-\tilde R_{\alpha_\delta}(a_\delta)\right)^2}. \]

Recalling that $\alpha_\delta=\hat e^{-1}(\delta)$ and that the function $e$ is right-continuous, we get from \autoref{thTransform} that 
$e(\alpha_\delta)\ge\Phi[e](\delta)$ and have by \autoref{deTransform} that $\frac{\delta^2}{\alpha_\delta}=\Phi[e](\delta)$. 
Thus, we obtain with \autoref{eqadelta} that
\begin{equation}\label{eqResultInSpectrum}
\tilde d(\delta) \ge c_0\Phi[e](\delta)\text{ with }c_0 \coloneqq \min \set{(1-\sigma)^2,\tfrac12(1-\tilde\sigma)^2}.
\end{equation}
\item It remains the case where $\alpha_\delta\coloneqq\hat e^{-1}(\delta)\notin\bm\sigma(L^*L)\setminus \set{0}$. We define
\[ \alpha_0\coloneqq\inf\{\alpha>0\mid e(\alpha)\ge e(\alpha_\delta)\} \in(0,\alpha_\delta]. \]
Since $e$ is right-continuous and monotonically increasing, the infimum is achieved and we have that $e(\alpha_0)=e(\alpha_\delta)$. Moreover, $\alpha_0\in\bm\sigma(L^*L)$, since $e$ is constant on every interval in $(0,\infty)\setminus\bm\sigma(L^*L)$ and so $\alpha_0\notin\bm\sigma(L^*L)$ would imply that $e(\lambda)=e(\alpha_\delta)$ for all $\lambda\in(\alpha_0-\varepsilon,\alpha_0+\varepsilon)$ for some $\varepsilon>0$ which would contradict the minimality of $\alpha_0$.

Setting $\delta_0\coloneqq\hat e(\alpha_0)$ (so $\hat e^{-1}(\delta_0)=\alpha_0$ and, according to \autoref{thTransform}, $e(\alpha_0)=\Phi[e](\delta_0)$), we have that $\delta_0=\hat e(\alpha_0)\le\hat e(\alpha_\delta)=\delta$ and we therefore find with the monotonicity of $\tilde d$, see \autoref{thRegErrorMonotone}, \autoref{eqResultInSpectrum}, and \autoref{thTransform} that
\[ \tilde d(\delta) \ge \tilde d(\delta_0)\ge c_0\Phi[e](\delta_0) = c_0e(\alpha_0) = c_0e(\alpha_\delta) \ge c_0\Phi[e](\delta). \]
\end{itemize}

Thus, we have shown for every $\delta\in(0,\bar\delta)$ that
\begin{equation}\label{eqNoisyData2interval}
\tilde d(\delta) \ge c_0\Phi[e](\delta),
\end{equation}
where $c_0$ is given by \autoref{eqResultInSpectrum}.

Now, we know from \autoref{thTransform} that $\Phi[e](\delta)\le e(\hat e^{-1}(\delta))\le e(\|L\|^2)$ for every $\delta>0$. Thus, setting $c\coloneqq\min\{c_0,\frac{\tilde d(\bar\delta)}{e(\norm{L}^2)}\}$, it follows with \autoref{eqNoisyData2interval} that the inequality $\tilde d(\delta)\ge c\Phi[e](\delta)$ holds for every $\delta>0$.

Following exactly the same lines, we also get that there exists a constant $C>0$ with
\[ \tilde D(\delta) \ge C\Phi[e](\delta)\text{ for every }\delta>0. \]
\end{proof}


\subsection{Optimal Convergence Rates}

Putting together all these results, we can characterise the convergence of the regularisation errors for noise free data and the best worst case errors equivalently in terms of the regularity of the minimum norm solution, concretely, in the behaviour of the spectral tail. And we have shown in \cite{AlbElbHooSch16} that this can also be written in the form of variational source conditions.

\begin{theorem}\label{thCR}
Let $\eta\in(0,1)$ be an arbitrary parameter and $\varphi\colon(0,\infty)\to(0,\infty)$ be a monotonically increasing function which is compatible with $(\ra)_{\alpha>0}$ in the sense of \autoref{deCompatible}. (The function $\varphi$ represents the expected convergence rate of the regularisation method.)

Then, the following statements are equivalent:
\begin{enumerate}
\item\label{enCRe}
There exists a constant $C_e>0$ such that $e(\lambda)\le C_e\varphi(\lambda)$ for every $\lambda>0$, meaning that the ratio of the spectral tail and the expected convergence rate is bounded.
\item\label{enCRd}
There exists a constant $C_d>0$ such that $d(\alpha)\le C_d\varphi(\alpha)$ for every $\alpha>0$, meaning that the ratio 
of the noise free rate of the regularisation method and the expected convergence rate is bounded.
\item\label{enCRD}
There exists a constant $C_D>0$ such that $D(\alpha)\le C_D\varphi(\alpha)$ for every $\alpha>0$, meaning that the ratio 
of the noise free rate of the envelope generated regularisation method and the expected convergence rate is bounded.
\item\label{enCRvar} The expected convergence rate satisfies the variational source condition that there exists a constant $C_\eta>0$ with
\begin{equation}\label{eqCRvar}
\left<\xdag ,x\right>\le C_\eta\|\varphi^{\frac1{2\eta}}(L^*L)x\|^\eta\|x\|^{1-\eta}\text{ for all } x\in \mathcal X.
\end{equation}
\end{enumerate}

\bigskip
If the function $\varphi$ is additionally right-continuous and $G$-subhomogeneous in the sense that there exists a continuous and monotonically increasing function $G\colon(0,\infty)\to(0,\infty)$ such that
\begin{equation}\label{eqCRsubhomogeneous}
\varphi(\gamma\alpha)\le G(\gamma)\varphi(\alpha)\text{ for all } \gamma>0,\;\alpha>0,
\end{equation}
then every one of these statements is also equivalent to each of the following two:
\begin{enumerate}
\addtocounter{enumi}{4}
\item\label{enCRtd}
There exists a constant $C_{\tilde d}>0$ such that $\tilde d(\delta)\le C_{\tilde d}\Phi[\varphi](\delta)$ for every $\delta>0$, meaning that the best worst case error of the regularisation method and the noise-free to noisy transformed expected convergence rate is bounded (in fact this justifies the name of the noise-free to noisy transform).
\item\label{enCRtD}
There exists a constant $C_{\tilde D}>0$ such that $\tilde D(\delta)\le C_{\tilde D}\Phi[\varphi](\delta)$ for every $\delta>0$, meaning that the best worst case error of the envelope regularisation method and the noise-free to noisy transformed expected convergence rate is bounded.
\end{enumerate}
\end{theorem}

\begin{proof}
We first note that there is nothing to show if $\xdag=0$, since then $e=d=D=\tilde d=\tilde D=0$, see \autoref{eqCrResidum}, \autoref{eqTriangularD}, and \autoref{eqTriangulard}. So, we assume that $\xdag\ne0$.

We also remark that if $\varphi$ is compatible with a regularisation method in the sense of \autoref{deCompatible} and $C>0$, 
then $C\varphi$ is compatible with the regularisation method.
\begin{description}
\item[\ref{enCRe}$\implies$\ref{enCRD}:] This follows directly from \autoref{thExactData2}.
\item[\ref{enCRD}$\implies$\ref{enCRd}:] This follows directly from \autoref{thExactData}.
\item[\ref{enCRd}$\implies$\ref{enCRe}:] This follows again directly from \autoref{thExactData}.
\item[\ref{enCRe}$\iff$\ref{enCRvar}:] This equivalence was proved in \cite[Proposition~4.1]{AlbElbHooSch16}.

\item[\ref{enCRD}$\implies$\ref{enCRtd}:]
Since $D\le C_D\varphi$, we get from \autoref{thTransformMonotone} and \autoref{thTransformConstant} that
\[ \Phi[D](\delta)\le\Phi[C_D\varphi](\delta)=C_D\Phi[\varphi](C_D^{-\frac12}\delta)\text{ for every }\delta>0. \]
Now, using the assumption from \autoref{eqCRsubhomogeneous}, we find with \autoref{thTransformHomogeneous} that
\[ \Phi[D](\delta)\le C_D\Phi[G](C_D^{-\frac12})\Phi[\varphi](\delta)\text{ for every }\delta>0. \]
We therefore get from \autoref{thNoisyData} that
\[ \tilde d(\delta)\le(1+\sigma)^2\Phi[D](\delta) \le (1+\sigma)^2C_D\Phi[G](C_D^{-\frac12})\Phi[\varphi](\delta)\text{ for every }\delta>0, \]
where $\sigma\in(0,1)$ is the constant from \autoref{deGenerator}~\ref{enGeneratorBounded}.

\item[\ref{enCRD}$\implies$\ref{enCRtD}:]
As before, \autoref{thNoisyData} implies
\[ \tilde D(\delta)\le(1+\sigma)^2\Phi[D](\delta) \le (1+\sigma)^2C_D\Phi[G](C_D^{-\frac12})\Phi[\varphi](\delta)\text{ for every }\delta>0. \]

\item[\ref{enCRtd}$\implies$\ref{enCRe}:] 
The estimate $\tilde d\le C_{\tilde d}\Phi[\varphi]$ together with the constant $c>0$ found in \autoref{thNoisyData2} yields that
\[ \Phi[e](\delta) \le \frac1c\tilde d(\delta) \le \frac{C_{\tilde d}}c\Phi[\varphi](\delta)\text{ for every }\delta>0. \]
Since we know from \autoref{thTransformConstant} that the function $\psi\colon(0,\infty)\to(0,\infty)$, defined by
\[ \psi(\alpha)\coloneqq\frac{C_{\tilde d}}c\varphi\left(\frac{C_{\tilde d}}c\alpha\right), \text{ fulfils } \Phi[\psi](\delta) = \frac{C_{\tilde d}}c\Phi[\varphi](\delta)\text{ for every }\delta>0, \]
it follows that $\Phi[e] \leq \Phi[\psi]$ and we get with \autoref{thTransformMonotone} and \autoref{eqCRsubhomogeneous} that
\[ e(\alpha) \le \psi(\alpha) = \frac{C_{\tilde d}}c\varphi\left(\frac{C_{\tilde d}}c\alpha\right) \le \frac{C_{\tilde d}}c G\left(\frac{C_{\tilde d}}c\right)\varphi(\alpha)\text{ for every }\alpha>0. \]

\item[\ref{enCRtD}$\implies$\ref{enCRe}:]
The estimate $\tilde D\le C_{\tilde D}\Phi[\varphi]$ 
yields with the constant $C>0$ found in \autoref{thNoisyData2} the inequality
\[ \Phi[e](\delta) \le \frac1C\tilde D(\delta) \le \frac{C_{\tilde D}}C\Phi[\varphi](\delta)\text{ for every }\delta>0 \]
and thus with \autoref{eqCRsubhomogeneous} as above:
\[ e(\alpha) \le \frac{C_{\tilde D}}C\varphi \left(\frac{C_{\tilde D}}C\alpha\right) \le \frac{C_{\tilde D}}CG\left(\frac{C_{\tilde D}}C\right)\varphi(\alpha)\text{ for every }\alpha>0. \]
\end{description}
\end{proof}

\begin{remark}
We note that the conditions in \autoref{thCR}~\ref{enCRd},~\ref{enCRD},~\ref{enCRtd}, and~\ref{enCRtD} are convergence rates for the regularised solutions, which are equivalent to the spectral tail condition in \autoref{thCR}~\ref{enCRe} and to the variational source conditions in \autoref{thCR}~\ref{enCRvar}. We also want to stress, and this is a new result in comparison to \cite{AlbElbHooSch16}, that this holds for regularisation methods $(\ra)_{\alpha>0}$ whose error functions $\tra$ are not necessarily non-negative and monotonically decreasing and that this also enforces optimal convergence rates for the regularisation methods generated by the envelopes $(\Ra)_{\alpha>0}$.

The first work on equivalence of optimality of regularisation methods is \cite{Neu97}, which has served as a basis for the results in \cite{AlbElbHooSch16}. The equivalence of the optimal rate in \autoref{thCR}~\ref{enCRe} and the variational source condition in \autoref{thCR}~\ref{enCRvar} has been analysed in a more general setting in \cite{HeiHof09,FleHof10,FleHofMat11,Fle11}
\end{remark}
 
In particular, all the equivalent statements of \autoref{thCR} follow (under the assumptions of \autoref{thCR}) from the standard source condition, see \cite[e.g.~Corollary 3.1.1]{Gro84}. However, the standard source condition is not equivalent to these statements, see, for example, \cite[Corollary~4.2]{AlbElbHooSch16}.

\begin{proposition}\label{thSsc}
Let $\varphi\colon(0,\infty)\to(0,\infty)$ be a monotonically increasing, continuous function such that the standard source condition
\[ \xdag \in \mathcal R(\varphi^{\frac12}(L^*L)) \]
is fulfilled.

Then, there exists for every $\eta\in(0,1]$ a constant $C_\eta>0$ such that
\[ \left<\xdag ,x\right>\le C_\eta\|\varphi^{\frac1{2\eta}}(L^*L)x\|^\eta\|x\|^{1-\eta}\text{ for all } x\in \mathcal X. \]
\end{proposition}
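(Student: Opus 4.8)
The plan is to exploit the standard source condition $\xdag = \varphi^{1/2}(L^*L)w$ for some $w \in \mathcal X$ and to rewrite the bilinear form $\langle \xdag, x\rangle$ using self-adjointness of $\varphi^{1/2}(L^*L)$, so that $\langle \xdag, x\rangle = \langle w, \varphi^{1/2}(L^*L)x\rangle$. The target inequality has $\|\varphi^{1/2\eta}(L^*L)x\|^\eta$ on the right, so the natural tool is an interpolation (moment) inequality for the positive self-adjoint operator $A := L^*L$: for $0 < s < t$ one has $\|\varphi^{s}(A)x\| \le \|\varphi^{t}(A)x\|^{s/t}\|x\|^{1-s/t}$, which is Jensen's inequality applied to the probability measure $\d\|\mathbf E_\lambda x\|^2 / \|x\|^2$ together with the convexity of $u \mapsto u^{t/s}$. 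Taking $s = \tfrac12$ and $t = \tfrac1{2\eta}$ (so $s/t = \eta \in (0,1]$, which is exactly the admissible range) gives $\|\varphi^{1/2}(A)x\| \le \|\varphi^{1/2\eta}(A)x\|^{\eta}\|x\|^{1-\eta}$.

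First I would fix $w$ with $\xdag = \varphi^{1/2}(L^*L)w$; this is legitimate because $\varphi^{1/2}(L^*L)$ is well-defined by the functional calculus (the remark after \autoref{deTransform} only needs $\varphi$ right-continuous, and here $\varphi$ is even continuous, and $\varphi$ is positive, so $\varphi^{1/2}$ and $\varphi^{1/2\eta}$ are continuous positive functions on the spectrum). Then by self-adjointness,
\[ \langle \xdag, x\rangle = \langle \varphi^{1/2}(L^*L)w, x\rangle = \langle w, \varphi^{1/2}(L^*L)x\rangle \le \|w\|\,\|\varphi^{1/2}(L^*L)x\| \]
by Cauchy--Schwarz. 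Next I would apply the interpolation inequality described above to bound $\|\varphi^{1/2}(L^*L)x\|$ by $\|\varphi^{1/2\eta}(L^*L)x\|^{\eta}\|x\|^{1-\eta}$, and set $C_\eta := \|w\|$ (independent of $x$, as required). For $\eta = 1$ the interpolation step is trivial (both sides coincide), so the statement holds there as well.

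The one point needing care is the interpolation inequality itself: writing $\mu$ for the scalar spectral measure $A \mapsto \|\mathbf E_A x\|^2$ on $[0,\|L\|^2]$, one has $\|\varphi^{1/2}(A)x\|^2 = \int \varphi(\lambda)\d\mu(\lambda)$ and $\|\varphi^{1/2\eta}(A)x\|^2 = \int \varphi(\lambda)^{1/\eta}\d\mu(\lambda)$; normalising $\mu$ to a probability measure $\mu/\|x\|^2$ and applying Jensen to the convex function $u \mapsto u^{1/\eta}$ yields $\big(\int \varphi \, \d(\mu/\|x\|^2)\big)^{1/\eta} \le \int \varphi^{1/\eta}\,\d(\mu/\|x\|^2)$, which rearranges to exactly the bound needed. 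The degenerate case $x = 0$ is trivial, and $\varphi > 0$ guarantees all powers of $\varphi$ are genuinely defined, so there is no real obstacle — this is essentially the classical moment inequality, and the main (mild) subtlety is just bookkeeping the exponents $s/t = \eta$ correctly and confirming $\eta \in (0,1]$ is the correct admissible range for the convexity of $u \mapsto u^{1/\eta}$.
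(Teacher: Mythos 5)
Your proposal is correct. Note, though, that the paper does not prove \autoref{thSsc} itself: it simply cites \cite[Corollary~4.2]{AlbElbHooSch16}, so your argument is a genuinely self-contained alternative to that citation. Your route is the standard one behind such results: write $\xdag=\varphi^{\frac12}(L^*L)w$, use self-adjointness and Cauchy--Schwarz to get $\left<\xdag,x\right>\le\norm w\,\norm{\varphi^{\frac12}(L^*L)x}$, and then the moment (interpolation) inequality $\norm{\varphi^{\frac12}(L^*L)x}\le\norm{\varphi^{\frac1{2\eta}}(L^*L)x}^\eta\norm x^{1-\eta}$, proved by Jensen with the normalised scalar spectral measure $\lambda\mapsto\norm{\mathbf E_{[0,\lambda]}x}^2/\norm x^2$ and the convex map $u\mapsto u^{\frac1\eta}$, $\eta\in(0,1]$; the exponent bookkeeping and the choice $C_\eta=\norm w$ are all in order, and the degenerate cases ($x=0$, $\eta=1$) are handled. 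The only point worth making explicit is that the spectrum of $L^*L$ may contain $0$ while $\varphi$ is only defined on $(0,\infty)$, so one tacitly extends $\varphi$ by its (monotone) limit at $0$; this is needed already to make sense of $\varphi^{\frac12}(L^*L)$ in the statement, integrability in the Jensen step is unproblematic since $\varphi$ is bounded on $[0,\norm L^2]$ by monotonicity, and if the extension vanishes at $0$ both sides of the inequality degenerate consistently because $\xdag\in\mathcal N(L^*L)^\perp$. What your approach buys is a self-contained, elementary proof inside the paper; what the citation buys is brevity and a pointer to the more general equivalence results of \cite{AlbElbHooSch16}.
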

\begin{proof}
This statement is shown in \cite[Corollary~4.2]{AlbElbHooSch16}.
\end{proof}

Let us finally take a look at the additional condition of $G$-subhomogeneity introduced in \autoref{eqCRsubhomogeneous} in \autoref{thCR} to prove optimal convergence rates for the best worst case errors and check that the convergence rates from \autoref{exCR} satisfy this condition.

\begin{lemma}\label{thConvRatesSub}
Let $\phiH_\mu$ and $\phiL_\mu$ denote the families of convergence rates defined in \autoref{exCR}.

Then we have for every parameter $\mu>0$ that
\begin{enumerate}
\item
the function $\phiH_\mu$ is $G$-subhomogeneous for $G(\gamma)\coloneqq\gamma^\mu$ in the sense of \autoref{eqCRsubhomogeneous} and
\item
there exists a monotonically increasing, continuous function $G\colon(0,\infty)\to(0,\infty)$ such that function $\phiL_\mu$ is $G$-subhomogeneous in the sense of \autoref{eqCRsubhomogeneous}.
\end{enumerate}
\end{lemma}
\begin{proof}
\mbox{}
\begin{enumerate}
\item
We clearly have $\phiH_\mu(\gamma\alpha)=\gamma^\mu\phiH_\mu(\alpha)$ for all $\gamma>0$ and $\alpha>0$.
\item
We consider the function $g(\alpha;\gamma)\coloneqq\frac{\phiL_\mu(\gamma\alpha)}{\phiL_\mu(\alpha)}$. Since $g\colon(0,\infty)\times(0,\infty)\to(0,\infty)$ is continuous, $g(\alpha;\gamma) \le 1$ for $\alpha\ge\e^{-1}$, and
\[ \lim_{\alpha\to0}g(\alpha;\gamma) = \lim_{\alpha\to0}\left(\frac{\left|\log\alpha\right|}{\left|\log\alpha\right|-\log\gamma}\right)^\mu = 1, \]
the function $\tilde G\colon(0,\infty)\to(0,\infty)$, $\tilde G(\gamma)\coloneqq\sup_{\alpha\in(0,\infty)}g(\alpha;\gamma)$ is well-defined, monotonically increasing and satisfies by construction $\phiL_\mu(\gamma\alpha)\le\tilde G(\gamma)\phiL_\mu(\alpha)$ for all $\gamma>0$ and $\alpha>0$. Thus, $\phiL$ is $G$-subhomogeneous for every monotonically increasing, continuous function $G$ with $G\ge\tilde G$.
\end{enumerate}
\end{proof}


\subsection{Optimal Convergence Rates for the Residual Error}
By applying \autoref{thCR} to the source $(L^*L)^{\frac12}x^\dag$, we can directly establish a relation to the convergence rates for the noise free residual errors $q$ and $Q$ of the regularisation method and the envelope generated regularisation method as defined in \autoref{eqQ}.

\begin{corollary}\label{thCRImage}
We introduce the squared norm of the spectral projection of $\bar x^\dag=(L^*L)^{\frac12}x^\dag$ as
\begin{equation}\label{eqE}
\bar e(\lambda)\coloneqq\norm{\mathbf E_{[0,\lambda]}\bar x^\dag}^2=\int_0^\lambda\tilde\lambda\d e(\tilde\lambda).
\end{equation}
Let $\bar\varphi\colon(0,\infty)\to(0,\infty)$ be a monotonically increasing function which is compatible with $(\ra)_{\alpha>0}$ in the 
sense of \autoref{deCompatible}. Then, the following statements are equivalent:
\begin{enumerate}
\item
There exists a constant $C_{\bar e}>0$ such that $\bar e(\lambda)\le C_{\bar e}\bar\varphi(\lambda)$ for every $\lambda>0$.
\item
There exists a constant $C_q>0$ such that $q(\alpha)\le C_q\bar\varphi(\alpha)$ for every $\alpha>0$.
\item
There exists a constant $C_Q>0$ such that $Q(\alpha)\le C_Q\bar\varphi(\alpha)$ for every $\alpha>0$.
\end{enumerate}
\end{corollary}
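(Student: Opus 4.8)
The plan is to reduce the assertion to an application of \autoref{thCR} to an auxiliary linear equation. Put $\bar x^\dag:=\sqrt{L^*L}x^\dag$ and $\bar y:=L\bar x^\dag$, so that $\bar y\in\mathcal R(L)$ and $\bar x^\dag$ solves $Lx=\bar y$. First I would verify that $\bar x^\dag$ is the \emph{minimum norm} solution of $Lx=\bar y$: since $x^\dag\in\mathcal N(L)^\perp$ and $\sqrt{L^*L}$, being a function of $L^*L$, maps $\mathcal N(L)^\perp=\overline{\mathcal R(L^*)}$ into itself, we get $\bar x^\dag\in\mathcal N(L)^\perp$, which together with $L\bar x^\dag=\bar y$ characterises it as the minimum norm solution. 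Consequently the spectral tail of $\bar x^\dag$ with respect to $L^*L$ is precisely the function $\bar e$ of \autoref{eqE}; the identity $\norm{\mathbf E_{[0,\lambda]}\bar x^\dag}^2=\int_0^\lambda\tilde\lambda\d e(\tilde\lambda)$ is the spectral calculus rule $\norm{\mathbf E_{[0,\lambda]}f(L^*L)x^\dag}^2=\int_{[0,\lambda]}|f|^2\d e$ applied with $f(\tilde\lambda)=\sqrt{\tilde\lambda}$.

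Next I would identify the noise free regularisation errors of $(\ra)_{\alpha>0}$ and $(\Ra)_{\alpha>0}$ for the auxiliary data $\bar y$ with the residual errors $q$ and $Q$. Using $Lx^\dag=y$, the relation $\norm{Lv}^2=\norm{\sqrt{L^*L}\,v}^2$, and the computation $\xa(y)-x^\dag=-\tra(L^*L)x^\dag$ from the proof of \autoref{thCrResidualRepresentations}, one has
\[ \sqrt{L^*L}\,(\xa(y)-x^\dag)=-\sqrt{L^*L}\,\tra(L^*L)x^\dag=-\tra(L^*L)\bar x^\dag=\xa(\bar y)-\bar x^\dag, \]
whence $q(\alpha)=\norm{L\xa(y)-y}^2=\norm{\xa(\bar y)-\bar x^\dag}^2$, and the analogous computation with $\Ra$ gives $Q(\alpha)=\norm{\Xa(\bar y)-\bar x^\dag}^2$. (Alternatively, both identities follow by comparing the representations in \autoref{eqCrResidual} with those of $d$ and $D$ for the source $\bar x^\dag$.)

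It then remains to invoke \autoref{thCR} for the equation $Lx=\bar y$ with minimum norm solution $\bar x^\dag$, the same generator $(\ra)_{\alpha>0}$, and the function $\bar\varphi$, which is compatible with $(\ra)_{\alpha>0}$ by hypothesis. Under the identifications of the previous two paragraphs, the equivalences \ref{enCRe}$\iff$\ref{enCRd}$\iff$\ref{enCRD} of \autoref{thCR} are exactly the asserted equivalence of $\bar e(\lambda)\le C_{\bar e}\bar\varphi(\lambda)$, $q(\alpha)\le C_q\bar\varphi(\alpha)$, and $Q(\alpha)\le C_Q\bar\varphi(\alpha)$. Only these three statements of \autoref{thCR} are used, so the $G$-subhomogeneity needed there for the statements on best worst case errors is irrelevant; the degenerate case $x^\dag=0$, in which all quantities vanish, is dispatched as in the proof of \autoref{thCR}.

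I do not expect a genuine obstacle here: the argument is bookkeeping with the spectral calculus. The one point deserving care is the verification that $\bar x^\dag$ is really the minimum norm solution of the auxiliary equation—so that the spectral tail entering \autoref{thCR} is genuinely $\bar e$ rather than the spectral tail of some larger element of $\mathcal X$—which rests on $\sqrt{L^*L}$ leaving $\mathcal N(L)^\perp$ invariant.
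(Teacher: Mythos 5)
Your proposal is correct and follows essentially the same route as the paper: reduce to \autoref{thCR} applied to the auxiliary problem $Lx=\bar y$ with minimum norm solution $\bar x^\dag=\sqrt{L^*L}x^\dag$, identifying the spectral tail with $\bar e$ and the noise free errors $d$, $D$ with $q$, $Q$ via the spectral representations of \autoref{thCrResidualRepresentations}. Your additional checks (invariance of $\mathcal N(L)^\perp$ under $\sqrt{L^*L}$, the explicit operator identity $\sqrt{L^*L}(\xa(y)-x^\dag)=\xa(\bar y)-\bar x^\dag$, and that only the equivalences \ref{enCRe}$\iff$\ref{enCRd}$\iff$\ref{enCRD} are needed) are sound elaborations of steps the paper states more tersely.
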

\begin{proof}
We first remark that since $x^\dag\in\mathcal N(L)^\perp=\mathcal N(L^*L)^\perp$, also $\bar x^\dag\in\mathcal N(L)^\perp$ and is therefore the minimum norm solution of the equation $L x=\bar y$ with $\bar y=L\bar x^\dag=L(L^*L)^{\frac12}x^\dag$. 
The claim now follows from \autoref{thCR} for the minimum norm solution $\bar x^\dag$ by identifying the function $e$ with $\bar e$ and the distances $d$ and~$D$ because of 
\begin{equation}\label{eqQInt}
q(\alpha) = \int_0^{\norm{L}^2}\lambda\tra^2(\lambda)\d e(\lambda) = \int_0^{\norm{L}^2}\tra^2(\lambda)\d\bar e(\lambda)\text{ and }Q(\alpha) = \int_0^{\norm{L}^2}\Tra^2(\lambda)\d\bar e(\lambda),
\end{equation}
see \autoref{thCrResidualRepresentations}, with $q$ and $Q$, respectively.
\end{proof}

From \autoref{thCRImage}, we can obtain a non-optimal characterisation for the convergence rates of the noise free residual errors $q$ and $Q$ in terms of the spectral tail $e$ of the minimum norm solution $\xdag$ instead of having to rely on the spectral tail $\bar e$ of the point $(L^*L)^{\frac12}\xdag$.

\begin{corollary}\label{thCRImageSimple}
Let $\bar\varphi\colon(0,\infty)\to(0,\infty)$ be a monotonically increasing function which is compatible with $(\ra)_{\alpha>0}$ 
in the sense of \autoref{deCompatible} and fulfils
\begin{equation}\label{eqCRImageSimpleCond}
\lambda e(\lambda)\le\bar\varphi(\lambda)\text{ for all }\lambda>0,
\end{equation}
meaning that the ratio of the spectral tail and $\bar\varphi$ is bounded by the spectral representation of the inverse of~$L^*L$.

Then, there exists a constant $C>0$ such that we have
\begin{equation}\label{eqResidualCR}
q(\alpha)\le Q(\alpha)\le C\bar\varphi(\alpha)\text{ for all }\alpha>0.
\end{equation}
\end{corollary}
\begin{proof}
The first inequality follows with \autoref{deGenerator}~\ref{enGeneratorErrorReg} directly from the representation in \autoref{eqCrResidual} for $q$ and $Q$:
\begin{equation}\label{eqqleQ}
q(\alpha) = \int_0^{\norm{L}^2}\lambda\tra^2(\lambda)\d e(\lambda) \le \int_0^{\norm{L}^2}\lambda\Tra^2(\lambda)\d e(\lambda) = Q(\alpha).
\end{equation}
For the second inequality, we use that the function $\bar e$ defined in \autoref{eqE} fulfils
\begin{equation}\label{eqElelambdae}
\bar e(\lambda) = \int_0^\lambda\tilde\lambda\d e(\tilde\lambda) \le \lambda \int_0^\lambda\d e(\tilde\lambda) = \lambda e(\lambda) \le \bar\varphi(\lambda)\text{ for every }\lambda>0.
\end{equation}
Thus, \autoref{thCRImage} implies that there exists a constant $C>0$ with $Q(\alpha)\le C\bar\varphi(\alpha)$ for all $\alpha>0$.
\end{proof}

\begin{remark}
In particular, \autoref{thCRImage} implies that \autoref{eqResidualCR} holds for all monotonically increasing functions $\bar\varphi$ with $\bar\varphi(\alpha)\ge c\alpha$ for some $c>0$ which are compatible with $(r_\alpha)_{\alpha>0}$.
\end{remark}

The condition in \autoref{eqCRImageSimpleCond} is, however, not equivalent to those in \autoref{thCRImage}.
\begin{example}
Let $\xdag$ be such that its spectral tail $e$ has the form
\begin{equation}\label{eqRelResidualCRe}
e(\lambda) = \frac1{\abs{\log\lambda}}\text{ for }\lambda\in(0,\lambda_0]
\end{equation}
for some $\lambda_0\in(0,1)$.

Then, we claim that $\bar e$, defined by \autoref{eqE}, converges faster to zero than $\lambda\mapsto\lambda e(\lambda)$, that is,
\begin{equation}\label{eqRelResidualCR}
\lim_{\lambda\to0}\frac{\bar e(\lambda)}{\lambda e(\lambda)} = 0,
\end{equation}
proving that the condition in \autoref{eqCRImageSimpleCond} is stronger than those in \autoref{thCRImage}.

To verify \autoref{eqRelResidualCR}, we plug in \autoref{eqE} and perform an integration by parts in the numerator to obtain
\[ \lim_{\lambda\to0}\frac{\bar e(\lambda)}{\lambda e(\lambda)} = 1-\lim_{\lambda\to0}\frac{\int_0^\lambda e(\tilde\lambda)\d\tilde\lambda}{\lambda e(\lambda)}. \]
Now, L'Hospital's rule implies that
\[ \lim_{\lambda\to0}\frac{\bar e(\lambda)}{\lambda e(\lambda)} = 1-\lim_{\lambda\to0}\frac{e(\lambda)}{e(\lambda)+\lambda e'(\lambda)} = 1-\frac1{1+\lim_{\lambda\to0}\frac{\lambda e'(\lambda)}{e(\lambda)}}. \]
Inserting our expression for $e$ from \autoref{eqRelResidualCRe}, we find that
\[ \lim_{\lambda\to0}\frac{\lambda e'(\lambda)}{e(\lambda)} = \lim_{\lambda\to0}\frac1{\abs{\log\lambda}} = 0 \]
herein, which shows \autoref{eqRelResidualCR}.
\end{example}

Since $\bar e$ tends by definition faster to zero than the identity $\bar\varphi\colon(0,\infty)\to(0,\infty)$, $\bar\varphi(\alpha)=\alpha$, the noise free residual errors $q$ and $Q$ also convergence (without imposing an additional source condition) faster than the identity provided that $\bar\varphi$ is compatible with $(\ra)_{\alpha>0}$.

\begin{corollary}\label{thResidualCR}
If the convergence rate $\bar\varphi\colon(0,\infty)\to(0,\infty)$, $\bar\varphi(\alpha)=\alpha$ is compatible with $(\ra)_{\alpha>0}$ in the sense of \autoref{deCompatible}, then we have that
\[ \lim_{\alpha\to0}\frac{q(\alpha)}\alpha = \lim_{\alpha\to0}\frac{Q(\alpha)}\alpha = 0. \]
\end{corollary}

\begin{proof}
Since $q\le Q$, see \autoref{eqqleQ}, it is enough to prove it for the function $Q$. We define $\bar e$ as in \autoref{eqE} and differ between two cases.
\begin{itemize}
\item
If $\bar e(\lambda)=0$ for all $\lambda\in[0,\lambda_0]$ for some $\lambda_0>0$, then we estimate, using the integral representation for~$Q$ from \autoref{eqQInt},
\[ Q(\alpha) = \int_{\lambda_0}^{\norm{L}^2}\Tra^2(\lambda)\d\bar e(\lambda) \le \Tra^2(\lambda_0)\|(L^*L)^{\frac12}\xdag\|^2. \]
Since $\bar\varphi$ is compatible to $(\ra)_{\alpha>0}$, we known from \autoref{deCompatibleRate} that
\[ \lim_{\alpha\to0}\frac{Q(\alpha)}\alpha = \|L\xdag\|^2\lim_{\alpha\to0}\frac{\Tra^2(\lambda_0)}\alpha = 0. \]

\item
If $\bar e(\lambda)>0$ for all $\lambda>0$, then we first construct using the compatibility of $\bar\varphi$, as in the proof of \autoref{thExactData2}, a monotonically decreasing and integrable function $\tilde F\colon[0,\infty)\to\R$ with
\[ \Tra^2(\lambda) \le\tilde F(\tfrac\lambda\alpha)\text{ for all }\alpha>0\text{ and }0<\lambda\le\norm{L}^2. \]

Next, we pick a monotonically increasing function $f\colon(0,\infty)\to(0,\norm{L}^2)$ with
\begin{equation}\label{eqResidualf}
\lim_{\alpha\to0}f(\alpha) = 0\text{ and }\lim_{\alpha\to0}\frac{\bar e(f(\alpha))}\alpha = \infty
\end{equation}
and split the integral in \autoref{eqQInt} for $Q$ at the point $f(\alpha)$ into two giving us
\begin{equation}\label{eqResidualFirst}
Q(\alpha) = \int_0^{\norm{L}^2}\Tra^2(\lambda)\d\bar e(\lambda) \le \int_0^{f(\alpha)}\tilde F(\tfrac\lambda\alpha)\d\bar e(\lambda)+\int_{f(\alpha)}^{\norm{L}^2}\tilde F(\tfrac\lambda\alpha)\d\bar e(\lambda).
\end{equation}

We check that both terms decay faster than $\alpha$.
\begin{itemize}
\item
Since $\bar e$ fulfils by its definition in \autoref{eqE} that
\[ \lim_{\lambda\to0}\frac{\bar e(\lambda)}\lambda = 0, \]
we find for every $\varepsilon>0$ a value $\alpha_0>0$ such that
\begin{equation}\label{eqEasym}
\bar e(\lambda) \le \varepsilon\lambda\text{ for all }0<\lambda<f(\alpha_0).
\end{equation}

Therefore, we get for the first term in \autoref{eqResidualFirst} with the substitution $z=\frac{\bar e(\lambda)}{\varepsilon\alpha}$ that
\[ \int_0^{f(\alpha)}\tilde F(\tfrac\lambda\alpha)\d\bar e(\lambda) \le \int_0^{f(\alpha)}\tilde F(\tfrac{\bar e(\lambda)}{\varepsilon\alpha})\d\bar e(\lambda)\le \varepsilon\alpha\|\tilde F\|_{L^1}\text{ for all }\alpha<\alpha_0. \]
And since this holds for arbitrary $\varepsilon>0$, we see that
\[ \lim_{\alpha\to0}\frac1\alpha\int_0^{f(\alpha)}\tilde F(\tfrac\lambda\alpha)\d\bar e(\lambda) = 0. \]
\item
For the second term in \autoref{eqResidualFirst}, we remark that \autoref{eqEasym} also implies that there exists a constant $C>0$ with
\[ \bar e(\lambda) \le C\lambda\text{ for all }\lambda>0. \]
Thus, we find with the substitution $z=\frac{\bar e(\lambda)}{C\alpha}$ that
\[ \int_{f(\alpha)}^{\norm{L}^2}\tilde F(\tfrac\lambda\alpha)\d\bar e(\lambda) \le \int_{f(\alpha)}^{\norm{L}^2}\tilde F(\tfrac{\bar e(\lambda)}{C\alpha})\d\bar e(\lambda)\le C\alpha\int_{\frac{\bar e(f(\alpha))}{C\alpha}}^\infty\tilde F(z)\d z\text{ for all }\alpha>0. \]
According to our choice of $f$, see \autoref{eqResidualf}, the integral converges to zero for $\alpha\to0$ and we therefore obtain
\[ \lim_{\alpha\to0}\frac1\alpha\int_{f(\alpha)}^{\norm{L}^2}\tilde F(\tfrac\lambda\alpha)\d\bar e(\lambda) = 0. \]
\end{itemize}

\end{itemize}

\end{proof}


\medskip
The results of this section explain the interplay of the convergence rates of the spectral tail of the minimum norm solution, the noise free regularisation error, and the best worst case error. For these different concepts equivalent rates can be derived. Moreover, these rates also infer rates for the noise free residual error. In addition to standard regularisation theory, we proved rates on the associated regularisation method defined in \autoref{eq:RA}.


\section{Spectral Decomposition Analysis of Regularising Flows}\label{seSpectral}
We now turn to the applications of these results to the method in \autoref{eqODE} with some continuous functions 
$a_k\in C((0,\infty);\R)$, $k=0,\ldots,N-1$. We hereby consider the solution as a function of the possibly not exact data $\tilde y\in \mathcal Y$. Thus, we look for a solution $\xi\colon \cointerval{\infty}\times \mathcal Y \to \mathcal X$ of
\begin{subequations}\label{eqODE1}
\begin{alignat}{2}
\partial_t^N\xi(t;\tilde y) + \sum_{k=1}^{N-1} a_k(t)\partial_t^k\xi(t;\tilde y) &= - L^*L \xi(t;\tilde y) + L^*\tilde y&& \text{ for all } t \in \ointerval{\infty}, \label{eqODE1a} \\
\partial_t^k\xi(0;\tilde y) &= 0 &&\text{ for all }k\in\set{0,\ldots,N-1}, \label{eqODE1b}
\end{alignat}
\end{subequations}
such that $\xi(\cdot;\tilde y)$ is $N$ times continuously differentiable for every $\tilde y$.

The following proposition provides an existence and uniqueness of the solution of flows of higher order. In case 
that the coefficients $a_k$ are in $C^\infty([0,\infty);\R)$ the result can also be derived simpler from an abstract Picard--Lindelöf theorem, see, for example, \cite[Section II.2.1]{Kre72}.
However, in our case $a_k$ might also have a singularity at the origin, such as in \autoref{eqODEVanishingViscosity}, and the proof gets more involved. 

\begin{proposition}\label{thSpectral}
Let $N\in\N$ and $\tilde y\in \mathcal Y$ be arbitrary, and let $A\mapsto\mathbf E_A$ denote the spectral measure of the operator~$L^*L$.

Assume that the initial value problem
\begin{subequations} \label{eqSpectralODE}
\begin{alignat}{2}
\partial_t^N\tilde\rho(t;\lambda)+\sum_{k=1}^{N-1}a_k(t)\partial_t^k\tilde\rho(t;\lambda)&=-\lambda\tilde\rho(t;\lambda) &&\text{ for all }\lambda\in\cointerval{\infty},\;t \in \ointerval{\infty}, \\
\partial_t^k\tilde\rho(0;\lambda)&=0 &&\text{ for all }\lambda\in\cointerval{\infty},\;k\in\set{1,\ldots,N-1},\\
\tilde\rho(0;\lambda)&=1 &&\text{ for all }\lambda\in\cointerval{\infty},\phantom{\sum_{k=1}^{N-1}}
\end{alignat}
\end{subequations}
has a unique solution $\tilde\rho\colon[0,\infty)\times[0,\infty)\to\R$ which is $N$ times partially differentiable with respect to $t$. 
Moreover, we assume that $\partial_t^k\tilde\rho\in C^1([0,\infty)\times[0,\infty);\R)$ for every $k\in\{0,\ldots,N\}$.

We define the function $\rho\colon[0,\infty)\times(0,\infty)\to\R$ by
\begin{equation}\label{eqRho}
\rho(t;\lambda) \coloneqq \frac{1-\tilde\rho(t;\lambda)}\lambda.
\end{equation}

Then, the function $\xi(\cdot;\tilde y)$, given by
\begin{equation}\label{eqSolutionODE}
\xi(t;\tilde y) = \int_{(0,\norm{L}^2]}\rho(t;\lambda)\d\mathbf E_\lambda L^*\tilde y\text{ for every }t\in[0,\infty),
\end{equation}
is the unique solution of \autoref{eqODE1} in the class of $N$ times strongly continuously differentiable functions.
\end{proposition}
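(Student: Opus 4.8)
The strategy is to reduce the operator-valued Cauchy problem \autoref{eqODE1} to the scalar problem \autoref{eqSpectralODE} via the spectral calculus for the bounded self-adjoint operator $L^*L$, whose spectrum is contained in $[0,\norm{L}^2]$. The candidate solution is defined in \autoref{eqSolutionODE} by applying the bounded Borel function $\lambda\mapsto\rho(t;\lambda)$ to $L^*L$ and evaluating at $L^*\tilde y$. I would proceed in three steps: first show that \autoref{eqSolutionODE} is well-defined and $N$ times strongly continuously differentiable, with derivatives obtained by differentiating $\rho$ under the spectral integral; second verify that it satisfies the differential equation and the initial conditions; third prove uniqueness.

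\textbf{Step 1 (well-definedness and differentiation under the integral).} Since $\tilde\rho(t;0)=1$ (solve the ODE at $\lambda=0$ with the given initial data: $\partial_t^N\tilde\rho+\sum a_k\partial_t^k\tilde\rho=0$ forces $\tilde\rho\equiv1$), the numerator $1-\tilde\rho(t;\lambda)$ vanishes at $\lambda=0$, and the hypothesis $\partial_t^k\tilde\rho\in C^1([0,\infty)\times[0,\infty);\R)$ gives, by the mean value theorem in $\lambda$, that $\rho(t;\lambda)=\frac{1-\tilde\rho(t;\lambda)}{\lambda}$ extends continuously to $\lambda=0$ and is bounded, together with its $t$-derivatives $\partial_t^k\rho$, uniformly for $(t,\lambda)$ in compact subsets of $[0,\infty)\times[0,\norm{L}^2]$. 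Hence each $\partial_t^k\rho(t;\cdot)$ is a bounded Borel function on the spectrum, so $\partial_t^k\rho(t;L^*L)L^*\tilde y\in\mathcal X$ is well-defined. To justify differentiating \autoref{eqSolutionODE}, I would use that for $h\to0$ the difference quotient $\frac{\rho(t+h;\lambda)-\rho(t;\lambda)}{h}-\partial_t\rho(t;\lambda)$ tends to $0$ uniformly in $\lambda$ on the (compact) spectrum — this follows from uniform continuity of $\partial_t\rho$ on compacta — so by the spectral theorem the corresponding vectors converge in norm; iterating gives strong $C^N$ differentiability with $\partial_t^k\xi(t;\tilde y)=\int_{(0,\norm{L}^2]}\partial_t^k\rho(t;\lambda)\d\mathbf E_\lambda L^*\tilde y$.

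\textbf{Step 2 (the equation and initial conditions).} From \autoref{eqRho}, $\tilde\rho=1-\lambda\rho$, so $\partial_t^k\tilde\rho=-\lambda\partial_t^k\rho$ for $k\ge1$. Substituting into \autoref{eqSpectralODE} gives, for every $\lambda>0$,
\[
\partial_t^N\rho(t;\lambda)+\sum_{k=1}^{N-1}a_k(t)\partial_t^k\rho(t;\lambda)=-\lambda\rho(t;\lambda)+1.
\]
Applying the spectral integral $\int_{(0,\norm{L}^2]}(\,\cdot\,)\d\mathbf E_\lambda L^*\tilde y$ to both sides, using Step 1 to pull the $t$-derivatives out, and using $\int_{(0,\norm{L}^2]}\lambda\rho(t;\lambda)\d\mathbf E_\lambda L^*\tilde y=L^*L\,\xi(t;\tilde y)$ together with $\int_{(0,\norm{L}^2]}1\,\d\mathbf E_\lambda L^*\tilde y=L^*\tilde y$ (here $\mathbf E_{\{0\}}L^*\tilde y=0$ since $\mathcal R(L^*)\subseteq\mathcal N(L^*L)^\perp$), yields exactly \autoref{eqODE1a}. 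The initial conditions \autoref{eqODE1b} follow from $\partial_t^k\rho(0;\lambda)=0$ for $k\in\{1,\dots,N-1\}$ and $\rho(0;\lambda)=\frac{1-\tilde\rho(0;\lambda)}{\lambda}=0$, again by applying the spectral integral.

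\textbf{Step 3 (uniqueness).} Suppose $\eta$ is another $N$ times strongly continuously differentiable solution, and set $\zeta=\xi-\eta$, which solves the homogeneous problem with zero initial data. For each $\lambda\ge0$ consider the scalar function $t\mapsto\inner{\mathbf E_{[0,\lambda]}\zeta(t;\tilde y)}{u}$ for arbitrary $u$; more efficiently, project onto a fixed spectral interval and observe that $\mathbf E_A\zeta(t;\tilde y)$ solves an $N$-th order linear ODE in $\mathcal X$ whose "coefficient" $L^*L$ restricted to $\mathcal R(\mathbf E_A)$ is a bounded operator, so the classical Picard–Lindelöf argument (Gronwall on $[0,T]$ after rewriting as a first-order system, noting the $a_k$ are continuous on the open interval and the solution is $C^N$ up to $t=0$) forces $\mathbf E_A\zeta\equiv0$; letting $A\uparrow(0,\norm{L}^2]$ gives $\zeta\equiv0$. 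Alternatively, and perhaps cleaner, apply $\mathbf E_A$ to the equation to get a scalar-integrated identity and run a Gronwall estimate directly on $t\mapsto\sum_{k=0}^{N-1}\norm{\partial_t^k\zeta(t;\tilde y)}^2$.

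\textbf{Main obstacle.} The delicate point is Step 1: the coefficients $a_k$ may be singular at $t=0$ (as in the vanishing viscosity case $a_1(t)=b/t$), so one cannot invoke a clean Picard–Lindelöf statement on $[0,\infty)$, and the regularity of $\rho$ near $\lambda=0$ and near $t=0$ must be squeezed out of the standing hypothesis that $\partial_t^k\tilde\rho\in C^1([0,\infty)\times[0,\infty);\R)$ — in particular, establishing that the difference quotients in $t$ converge \emph{uniformly in $\lambda\in[0,\norm{L}^2]$}, which is what makes the norm-convergence of the spectral integrals work, is the technical heart of the argument. The uniqueness in Step 3 is routine once this regularity, and the reduction to bounded spectral pieces, is in hand.
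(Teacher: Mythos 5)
Your Steps 1 and 2 match the paper's argument (extension of $\rho$ to $\lambda=0$ via $\tilde\rho(t;0)=1$ and the mean value theorem, differentiation under the spectral integral, then plugging in), but Step 3 contains a genuine gap, and it is exactly the point the proposition is designed to overcome. You propose to prove uniqueness by a Picard--Lindelöf/Gronwall argument on $[0,T]$ for $\mathbf E_A\zeta$, ``noting the $a_k$ are continuous on the open interval and the solution is $C^N$ up to $t=0$''. This fails precisely in the case of main interest, the vanishing viscosity flow with $a_1(t)=\frac bt$: the Gronwall factor involves $\exp\bigl(\int_0^T\sum_k\abs{a_k(s)}\d s\bigr)$, which is infinite, and starting the estimate at $t=\varepsilon>0$ does not help because the constant grows like $\varepsilon^{-b}$ while you only know $\partial_t^k\zeta(\varepsilon)\to0$ without a rate. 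Projecting onto a spectral interval $A$ buys nothing here, since $L^*L$ is bounded anyway; the obstruction is the time singularity of the coefficients, not the operator. A telltale sign of the gap is that your uniqueness argument never uses the standing assumption that the scalar problem \autoref{eqSpectralODE} has a unique solution --- that assumption is in the proposition exactly because operator-level uniqueness has to be deduced from scalar uniqueness, not from a Cauchy--Lipschitz theorem at $t=0$.

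For comparison, the paper's uniqueness proof takes the difference $\xi_0$ of two solutions, fixes an interior time $t_0>0$, and applies the classical well-posedness result only on compact intervals $[t_1,t_2]\subset(0,\infty)$, where the $a_k$ are continuous. This yields a spectral representation $\xi_0(t;\tilde y)=\sum_\ell\int\rho_\ell(t;\lambda)\d\mathbf E_\lambda\xi^{(\ell)}$ with data prescribed at $t_0$. It then decomposes the associated spectral measures (Lebesgue decomposition plus pointwise diagonalisation of the positive semi-definite densities), uses the finiteness of $\int_0^{t_0}\norm{\partial_t^k\xi_0}^2\d t$ and the Sobolev embedding to show that the scalar components $\bar\rho_{j\ell}(\cdot;\lambda)$ extend continuously to $[0,t_0]$ for a.e.\ $\lambda$, deduces from $\partial_t^k\xi_0(t)\to0$ as $t\to0$ that these components satisfy the zero initial conditions a.e., and only then invokes the assumed uniqueness of \autoref{eqSpectralODE} to conclude $\xi_0\equiv0$. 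Your ``routine'' Step 3 would have to be replaced by an argument of this kind (or some other mechanism that transfers the scalar uniqueness hypothesis to the operator equation); as written it only covers the case where all $a_k$ extend continuously to $t=0$, i.e.\ Showalter and the heavy ball flow, but not \autoref{eqODEVanishingViscosity}.
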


\begin{proof}
We split the proof in multiple parts. First, we will show that $\rho$ and $\xi$, defined by \autoref{eqRho} and \autoref{eqSolutionODE}, 
are sufficiently regular. Then, we conclude from this that $\xi$ satisfies the \autoref{eqODE1}. And finally, we show that every other 
solution of \autoref{eqODE1} coincides with $\xi$.
\begin{itemize}
\item
We start by showing that the function $\rho$ defined by \autoref{eqRho} can be extended to a function 
$\rho\colon[0,\infty)\times[0,\infty)\to\R$ which is $N$ times continuously differentiable with respect to $t$ by setting
\begin{equation} \label{eq:rho_tn}
 \rho(t;0) \coloneqq -\partial_\lambda\tilde\rho(t;0).
\end{equation}
For this, we only have to check the continuity of all the derivatives at the points $(t,0)$, $t\in[0,\infty)$. We observe that the solution of
\autoref{eqSpectralODE} for $\lambda=0$ is given by
\[ \tilde\rho(t;0)=1\text{ for every }t\in[0,\infty). \]
For the derivatives $\partial_t^k\rho$, $k\in\{0,\ldots,N\}$, we therefore find with the mean value theorem (recall that 
$\partial_\lambda\partial_t^k\tilde\rho=\partial_t^k\partial_\lambda\tilde\rho$ according to Schwarz's theorem, see, e.g., 
\cite[Theorem~9.1]{Rud76}, since $\partial_t^\ell\tilde\rho\in C^1([0,\infty)\times[0,\infty);\R)$ for every $\ell\in\{0,\ldots,k\}$) and 
\autoref{eq:rho_tn} that
\begin{align*}
\lim_{(\tilde t,\tilde\lambda)\to(t,0)}\left(\partial_t^k\rho(\tilde t,\tilde\lambda)-\partial_t^k\rho(t,0)\right)
&= \lim_{(\tilde t,\tilde\lambda)\to(t,0)}
      \left(\frac{\partial_t^k\tilde\rho(\tilde t,0)-\partial_t^k\tilde\rho(\tilde t,\tilde\lambda)}{\tilde\lambda}
                 +\partial_t^k\partial_\lambda\tilde\rho(t;0)\right) \\
&= \lim_{(\tilde t,\hat\lambda)\to(t,0)}\left(\partial_t^k\partial_\lambda\tilde\rho(t;0) 
   -\partial_\lambda\partial_t^k\tilde\rho(\tilde t,\hat\lambda)\right) = 0,
\end{align*}
which proves that $\partial_t^k\rho$ is for every $k\in\{0,\ldots,N\}$ continuous in $[0,\infty)\times[0,\infty)$.
\item
Next, we are going to show that the function $\xi$ is $N$ times continuously differentiable with respect to~$t$ and that its partial derivatives are for every $k\in\{0,\ldots,N\}$ given by
\begin{equation}\label{eqSolDerivative}
\partial_t^k\xi(t;\tilde y) = \int_{(0,\norm{L}^2]}\partial_t^k\rho(t;\lambda)\d\mathbf E_\lambda L^*\tilde y.
\end{equation}

To see this, we assume by induction that \autoref{eqSolDerivative} holds for $k=\ell$ for some $\ell\in\{0,\ldots,N-1\}$. Then, we get with the Borel measure $\mu_{L^*\tilde y}$ on $[0,\infty)$ defined by $\mu_{L^*\tilde y}(A)=\|\mathbf E_AL^*\tilde y\|^2$ that
\begin{align*}
\lim_{h\to0}&\left\|\frac{\partial_t^\ell\xi(t+h;\tilde y)-\partial_t^\ell\xi(t;\tilde y)}h-\int_{(0,\norm{L}^2]}\partial_t^{\ell+1}\rho(t;\lambda)\d\mathbf E_\lambda L^*\tilde y\right\|^2 \\
&= \lim_{h\to0}\left\|\int_{(0,\norm{L}^2]}\left(\frac{\partial_t^\ell\rho(t+h;\lambda)-\partial_t^\ell\rho(t;\lambda)}h-\partial_t^{\ell+1}\rho(t;\lambda)\right)\d\mathbf E_\lambda L^*\tilde y\right\|^2 \\
&= \lim_{h\to0}\int_{(0,\norm{L}^2]}\left(\frac{\partial_t^\ell\rho(t+h;\lambda)-\partial_t^\ell\rho(t;\lambda)}h-\partial_t^{\ell+1}\rho(t;\lambda)\right)^2\d\mu_{L^*\tilde y}(\lambda).
\end{align*}
Now, since $\partial_t^{\ell+1}\rho$ is continuous, it is in particular bounded on every compact set $[0,T]\times[0,\norm{L}^2]$, $T>0$. And since the measure $\mu_{L^*\tilde y}$ is finite, Lebesgue's dominated convergence theorem implies that
\begin{align*}
\lim_{h\to0}&\left\|\frac{\partial_t^\ell\xi(t+h;\tilde y)-\partial_t^\ell\xi(t;\tilde y)}h-\int_{(0,\norm{L}^2]}\partial_t^{\ell+1}\rho(t;\lambda)\d\mathbf E_\lambda L^*\tilde y\right\|^2 \\
&= \int_{(0,\norm{L}^2]}\lim_{h\to0}\left(\frac{\partial_t^\ell\rho(t+h;\lambda)-\partial_t^\ell\rho(t;\lambda)}h-\partial_t^{\ell+1}\rho(t;\lambda)\right)^2\d\mu_{L^*\tilde y}(\lambda)= 0,
\end{align*}
which proves \autoref{eqSolDerivative} for $k=\ell+1$. Since \autoref{eqSolDerivative} holds by definition of $\xi$ for $k=0$, this implies by induction that \autoref{eqSolDerivative} holds for all $k\in\{0,\ldots,N\}$.

Finally, the continuity of the $N$th derivative $\partial_t^N\xi$ follows in the same way directly from Lebesgue's dominated convergence theorem:
\[ \lim_{\tilde t\to t}\left\|\partial_t^N\xi(\tilde t;\tilde y)-\partial_t^N\xi(t;\tilde y)\right\|^2 = \lim_{\tilde t\to t}\int_{(0,\norm{L}^2]}\left(\partial_t^N\rho(\tilde t;\lambda)-\partial_t^N\rho(t;\lambda)\right)^2\d\mu_{L^*\tilde y} = 0. \]
\item
To prove that $\xi$ solves \autoref{eqODE1}, we plug the definition of $\rho$ from \autoref{eqRho} into \autoref{eqSolDerivative} and find
\[ \partial_t^N\xi(t;\tilde y)+\sum_{k=1}^{N-1}a_k(t)\partial_t^k\xi(t;\tilde y) = -\int_{(0,\norm{L}^2]}\frac1\lambda\left(\partial_t^N\tilde\rho(t;\lambda)+\sum_{k=1}^{N-1}a_k(t)\partial_t^k\tilde\rho(t;\lambda)\right)\d\mathbf E_\lambda L^*\tilde y. \]
Making use of \autoref{eqSpectralODE}, we get that $\xi$ fulfils \autoref{eqODE1a}:
\begin{align*}
\partial_t^N\xi(t;\tilde y)+\sum_{k=1}^{N-1}a_k(t)\partial_t^k\xi(t;\tilde y) &= \int_{(0,\norm{L}^2]}\tilde\rho(t;\lambda)\d\mathbf E_\lambda L^*\tilde y \\
&= \int_{(0,\norm{L}^2]}(-\lambda\rho(t;\lambda)+1)\d\mathbf E_\lambda L^*\tilde y = -L^*L\xi(t;\tilde y)+L^*\tilde y.
\end{align*}
(We remark that $\mathcal R(L^*)\subset\mathcal N(L)^\perp=\mathcal N(L^*L)^\perp$ which implies that $\mathbf E_{(0,\|L\|^2]}L^*\tilde y=L^*\tilde y$.)

And for the initial conditions, we get, in agreement with \autoref{eqODE1b}, from \autoref{eqSolDerivative} that
\begin{align*}
\partial_t^k\xi(0;\tilde y) &= -\int_{(0,\norm{L}^2]}\partial_t^k\tilde\rho(0;\lambda)\d\mathbf E_\lambda L^*\tilde y = 0,\;k\in\set{1,\ldots,N-1},\text{ and} \\
\xi(0;\tilde y) &= \int_{(0,\norm{L}^2]}\frac{1-\tilde\rho(0;\lambda)}\lambda\d\mathbf E_\lambda L^*\tilde y = 0.
\end{align*}
\item
It remains to show that \autoref{eqSolutionODE} defines the only solution of \autoref{eqODE1}.

So assume that we have two different solutions of \autoref{eqODE1} and call $\xi_0$ the difference between the two solutions. We choose an arbitrary $t_0>0$ and write $\partial_t^k\xi_0(t_0;\tilde y)=\xi^{(k)}$ for every $k\in\set{0,\ldots,N-1}$. Then, $\xi_0$ is a solution of the initial value problem
\begin{subequations}\label{eqODEDiff}
\begin{alignat}{2}
\partial_t^N\xi_0(t;\tilde y) + \sum_{k=1}^{N-1} a_k(t)\partial_t^k\xi_0(t;\tilde y) &= - L^*L \xi_0(t;\tilde y) &&\text{ for all } t \in \ointerval{\infty} \\
\partial_t^k\xi_0(t_0;\tilde y) &= \xi^{(k)} &&\text{ for all } k\in\set{0,\ldots,N-1}.
\end{alignat}
\end{subequations}
We know, for example, from \cite[Section II.2.1]{Kre72}, that \autoref{eqODEDiff} has a unique solution on every interval~$[t_1,t_2]$, $0<t_1<t_0<t_2$. Thus, we can write $\xi_0$ in the form
\[ \xi_0(t;\tilde y) = \sum_{\ell=0}^{N-1}\int_{[0,\infty)}\rho_\ell(t;\lambda)\d\mathbf E_\lambda\xi^{(\ell)} \]
with the functions $\rho_\ell$ solving for every $\lambda\in[0,\infty)$ the initial value problems
\begin{alignat*}{2}
\partial_t^N\rho_\ell(t;\lambda)+\sum_{k=1}^{N-1}a_k(t)\partial_t^k\rho_\ell(t;\lambda)&=-\lambda\rho_\ell(t;\lambda)&&\text{ for all }t \in \ointerval{\infty}, \\
\partial_t^k\rho_\ell(t_0;\lambda)&=\delta_{k\ell}&&\text{ for all }k,\ell\in\set{0,\ldots,N-1}.
\end{alignat*}
(Since $a_k$ is continuous on $(0,\infty)$, Lebesgue's dominated convergence theorem is applicable to every compact set $[t_1,t_2]\times[0,\norm{L}^2]$, $0<t_1<t_0<t_2$.)

Now, we have for every measurable subset $A\subset[0,\infty)$ and every $k\in\{0,\ldots,N-1\}$ that
\[ \|\mathbf E_A\partial_t^k\xi_0(t;\tilde y)\|^2 = \sum_{\ell,m=0}^{N-1}\int_A\partial_t^k\rho_\ell(t;\lambda)\partial_t^k\rho_m(t;\lambda)\d\mu_{\xi^{(\ell)},\xi^{(m)}}(\lambda), \]
where the signed measures $\mu_{\eta_1,\eta_2}$, $\eta_1,\eta_2\in \mathcal X$, are defined by $\mu_{\eta_1,\eta_2}(A)=\left<\eta_1,\mathbf E_A\eta_2\right>$. 

The measures $\mu_{\xi^{(\ell)},\xi^{(m)}}$ with $\ell\ne m$ are absolutely continuous with respect to $\mu_{\xi^{(\ell)},\xi^{(\ell)}}$ and 
with respect to $\mu_{\xi^{(m)},\xi^{(m)}}$. Moreover, we can use Lebesgue's decomposition theorem, see, e.g., \cite[Theorem~6.10]{Rud87}, to split 
the measures $\mu_{\xi^{(\ell)},\xi^{(\ell)}}$, $\ell\in\{0,\ldots,N-1\}$, into measures $\mu_j$, $j\in\{0,\ldots,J\}$, $J\le N-1$, which are mutually 
singular to each other, so, explicitly, we write
\[ \mu_{\xi^{(\ell)},\xi^{(m)}} = \sum_{j=0}^J f_{j\ell m}\mu_j \]
for some measurable functions $f_{j\ell m}$ with $f_{j\ell m}=f_{j m\ell}$. Since then
\[ 0\le\left\|\sum_{\ell=0}^{N-1}\int_Ag_\ell(\lambda)\d\mathbf E_\lambda\xi^{(\ell)}\right\|^2 = \sum_{j=0}^J\int_A\sum_{\ell,m=0}^{N-1}f_{j\ell m}(\lambda)g_\ell(\lambda)g_m(\lambda)\d\mu_j(\lambda) \]
has to hold for all functions $g_\ell\in C([0,\infty);\R)$, $\ell\in\{0,\ldots,N-1\}$, and all measurable sets $A\subset[0,\infty)$, the matrices $F_j(\lambda)=(f_{j\ell m}(\lambda))_{\ell,m=0}^{N-1}$ are (after possibly redefining $f_{j\ell m}$ on sets $A_{j\ell m}$ with $\mu_j(A_{j\ell m})=0$) positive semi-definite.
Thus, we have for every measurable set $A\subset[0,\infty)$ that
\[ \|\mathbf E_A\partial_t^k\xi_0(t;\tilde y)\|^2 = \sum_{j=0}^J\int_A\sum_{\ell,m=0}^{N-1}f_{j\ell m}(\lambda)\partial_t^k\rho_\ell(t;\lambda)\partial_t^k\rho_m(t;\lambda)\d\mu_j(\lambda), \]
where the integrand is a positive semi-definite quadratic form of $\partial_t^k\rho$, namely $(\partial_t^k\rho)^{\mathrm T}F_j(\partial_t^k\rho)$, where $\rho=(\rho_\ell)_{\ell=0}^{N-1}$. We can therefore find for every $j\in\{0,\ldots,J\}$ and every $\lambda$ a change of coordinates $O_j(\lambda)\in\mathrm{SO}_N(\R)$ such that the matrix $O_j^{\mathrm T}(\lambda)F_j(\lambda)O_j(\lambda)=\mathrm{diag}(d_{j\ell}(\lambda))_{\ell=0}^{N-1}$ is diagonal with non-negative diagonal entries $d_{j\ell}(\lambda)$. Setting $\bar\rho_{j\ell}(t;\lambda)=(O_j(\lambda)\rho(t;\lambda))_\ell$ and $\bar\mu_{j\ell}=d_{j\ell}\mu_j$, we get
\begin{equation}\label{eqSolDiff}
\|\mathbf E_A\partial_t^k\xi_0(t;\tilde y)\|^2 = \sum_{j=0}^J\sum_{\ell=0}^{N-1}\int_A\left(\partial_t^k\bar\rho_{j\ell}(t;\lambda)\right)^2\d\bar\mu_{j\ell}(\lambda).
\end{equation}

Since $\xi_0\colon[0,\infty)\to \mathcal X$ is $N$ times continuously differentiable, it follows from \autoref{eqSolDiff} that
\[ \int_0^{t_0}\int_{[0,\infty)}\left(\partial_t^k\bar\rho_{j\ell}(t;\cdot)\right)^2\d\bar\mu_{j\ell}(\lambda)\d t < \infty\text{ for every }k\in\{0,\ldots,N\}, \]
and therefore, there exists a set $\Lambda_{j\ell}\subset[0,\infty)$ with $\bar\mu_{j\ell}([0,\infty)\setminus\Lambda_{j\ell})=0$ such that
\[ \int_0^{t_0}\left(\partial_t^k\bar\rho_{j\ell}(t;\lambda)\right)^2\d t < \infty\text{ for every }\lambda\in\Lambda_{j\ell}\text{ and every }k\in\{0,\ldots,N\}. \]
So, $\bar\rho_{j\ell}(\cdot;\lambda)$ is for every $\lambda\in\Lambda_{j\ell}$ in the Sobolev space $H^N([0,t_0],\bar\mu_{j\ell})$. By the Sobolev embedding theorem, see, e.g., \cite[Theorem 5.4]{Ada75}, we thus have that $\partial_t^k\bar\rho_{j\ell}(\cdot;\lambda)$ extends for every $\lambda\in\Lambda_{j\ell}$ and every $k\in\{0,\ldots,N-1\}$ continuously to a function on $[0,t_0]$.

Since $\xi_0$ is the difference of two solutions of \autoref{eqODE1}, we have in particular that
\[ \lim_{t\to0}\|\partial_t^k\xi_0(t;\tilde y)\|^2 = 0\text{ for every } k\in\{0,\ldots,N-1\}. \]
Thus, \autoref{eqSolDiff} implies that $\partial_t^k\bar\rho_{j\ell}(t;\cdot)\to0$ in $L^2([0,\infty),\bar\mu_{j\ell})$ with respect to the norm topology as $t\to0$.
Because of the continuity of $\partial_t^k\bar\rho_{j\ell}(\cdot;\lambda)$, this means that there exists a set $\tilde\Lambda_{j\ell}$ with $\bar\mu_{j\ell}([0,\infty)\setminus\tilde\Lambda_{j\ell})=0$ such that we have for every $k\in\{0,\ldots,N-1\}$:
\[ \lim_{t\to0}\partial_t^k\bar\rho_{j\ell}(t;\lambda) = 0\text{ for every }\lambda\in\tilde\Lambda_{j\ell}. \]
But since \autoref{eqSpectralODE} has a unique solution, this implies that $\bar\rho_{j\ell}(t;\lambda)=0$ for all $t\in[0,\infty)$, $\lambda\in\tilde\Lambda_{j\ell}$, and therefore, because of \autoref{eqSolDiff}, that $\xi_0(t;\tilde y)=0$ for every $t\in[0,\infty)$, which proves the uniqueness of the solution of \autoref{eqODE1}.
\end{itemize}
\end{proof}


In the following sections, we want to show for various choices of coefficients $a_k$ that there exists a mapping $T\colon(0,\infty)\to(0,\infty)$ between the regularisation parameter $\alpha$ and the time $t$ such that the solution $\xi$ corresponds to a regularised solution $x_\alpha$, as defined in \autoref{de:error_f}, via
\[ \xi(T(\alpha);\tilde y) = x_\alpha(\tilde y) \]
for some appropriate generator $(r_\alpha)_{\alpha>0}$ of a regularisation method as introduced in \autoref{deGenerator}. Since we have by \autoref{de:error_f} of the regularised solution that
\[ x_\alpha(\tilde y) = r_\alpha(L^*L)L^*\tilde y = \int_{(0,\norm{L}^2]}r_\alpha(\lambda)\d\mathbf E_\lambda L^*\tilde y \]
and the solution $\xi$ is according to \autoref{thSpectral} of the form of \autoref{eqSolutionODE}, this boils down to finding a mapping $T$ such that if we define the functions $r_\alpha$ by
\[ r_\alpha(\lambda) = \rho(T(\alpha);\lambda), \]
they generate a regularisation method in the sense of \autoref{deGenerator}.

\section{Showalter's method}
\label{se:show}

Showalter's method, given by \autoref{eq:Showalter}, is the gradient flow method for the functional $\mathcal J$. According to \autoref{thSpectral}, we rewrite it as a system of first order 
ordinary differential equations for the error function $\tilde\rho$ of the spectral values $\lambda$ of $L^*L$, 
which in this particular case reads
\begin{equation} \label{eq:Showalter_spectral}
\begin{aligned}
\partial_t\tilde{\rho}(t;\lambda)+\lambda\tilde\rho(t;\lambda)&=0 \text{ for all } \lambda\in\ointerval{\infty},\;t \in \ointerval{\infty}, \\
\tilde{\rho}(0;\lambda) &= 1 \text{ for all }\lambda\in\ointerval{\infty}.
\end{aligned}
\end{equation}

\begin{lemma}\label{thShoSpectral}
The solution $\tilde{\rho}$ of \autoref{eq:Showalter_spectral} is given by
\begin{equation}\label{eqShowSolSpectral}
\tilde{\rho}(t;\lambda) = \e^{-\lambda t}\text{ for all }(t,\lambda) \in \cointerval{\infty} \times \ointerval{\infty}.
\end{equation}
In particular, the solution of Showalter's method, that is, the solution of \autoref{eqODE1} with $N=1$, is given by
\begin{equation}\label{eqShowSol}
\xi(t;\tilde y) = \int_{(0,\norm{L}^2]}\frac{1-\e^{-\lambda t}}\lambda\d\mathbf E_\lambda L^*\tilde y,
\end{equation}
where $A\mapsto\mathbf E_A$ denotes the spectral measure of $L^*L$.
\end{lemma}
\begin{proof}
Clearly, the smooth function $\tilde\rho$ defined in \autoref{eqShowSolSpectral} is the unique solution of \autoref{eq:Showalter_spectral} and the function $\rho$ defined in \autoref{eqRho} is $\rho(t;\lambda)=\frac{1-\e^{-\lambda t}}\lambda$, $t\ge0$, $\lambda>0$. So, \autoref{thSpectral} gives us the solution \autoref{eqShowSol}.
\end{proof}

Next, we want to show that, by identifying $\alpha=\frac1t$ as regularisation parameter, the solution $\xi(\frac1\alpha;\tilde y)$ is a regularised solution of the equation $L x=y$ in the sense of \autoref{de:error_f}. For the verification of the property in \autoref{deGenerator}~\ref{enGeneratorBounded} of the regularisation method, it is convenient to be able to estimate the function $1-\e^{-z}$ by $\sqrt z$.

\begin{lemma}\label{lem:aux1}
There exists a constant $\sigma_0\in(0,1)$ such that
\begin{equation}\label{eqExpSqrt}
1-\e^{-z}\leq \sigma_0 \sqrt{z}\text{ for every }z\ge0.
\end{equation}
\end{lemma}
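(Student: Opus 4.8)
The plan is to show that the function $g(z) = \sigma_0 \sqrt z - (1 - \e^{-z})$ is non-negative on $[0,\infty)$ for a suitable choice of $\sigma_0 \in (0,1)$. First I would observe that at $z = 0$ both sides vanish, so equality holds there, and that as $z \to \infty$ the left-hand side grows like $\sigma_0\sqrt z \to \infty$ while $1 - \e^{-z} \to 1$, so the inequality holds for all sufficiently large $z$ and for any $\sigma_0 > 0$. The work is therefore confined to a compact interval away from $0$.

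Next I would make the substitution $z = w^2$ with $w \ge 0$, reducing \autoref{eqExpSqrt} to $1 - \e^{-w^2} \le \sigma_0 w$. Define $h(w) = \frac{1 - \e^{-w^2}}{w}$ for $w > 0$ and $h(0) = 0$ (the limit as $w \to 0$, since $1 - \e^{-w^2} = w^2 + o(w^2)$). The claim is exactly that $\sup_{w > 0} h(w) =: \sigma_0 < 1$, together with $\sigma_0 > 0$, which is clear. Since $h$ is continuous on $[0,\infty)$ with $h(0) = 0$ and $\lim_{w\to\infty} h(w) = 0$, the supremum is attained at some finite $w^* > 0$, so $\sigma_0 = h(w^*)$ is well-defined. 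It remains to check $\sigma_0 < 1$: one way is the crude pointwise bound $1 - \e^{-w^2} \le 1$ for all $w$, so $h(w) \le \frac1w < 1$ for $w \ge 1$; and for $w \in (0,1)$ one uses $1 - \e^{-w^2} \le w^2$ (from $\e^{-t} \ge 1 - t$), giving $h(w) \le w < 1$. Hence $h(w) < 1$ for all $w > 0$, so the supremum $\sigma_0 \le 1$; and since the maximum is attained at an interior point, $\sigma_0 = h(w^*) < 1$ strictly.

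The main (and only mild) obstacle is making sure the supremum is genuinely strictly below $1$ rather than merely $\le 1$: a sloppy argument could allow $\sigma_0 = 1$ in a limit, which would be useless since \autoref{deGenerator}~\ref{enGeneratorBounded} needs $\sigma \in (0,1)$ open. This is handled precisely by the attainment argument above — the continuous function $h$ tends to $0$ at both ends of $[0,\infty)$, so its supremum is achieved at a finite point $w^*$, and at that point the strict pointwise bounds $h(w) < 1$ force $\sigma_0 < 1$. Alternatively, one could avoid calculus entirely and simply take $\sigma_0 = 1 - \e^{-1} < 1$ if one verifies $1 - \e^{-z} \le (1 - \e^{-1})\sqrt z$ directly, but the compactness argument is cleaner and self-contained, so that is the route I would present.
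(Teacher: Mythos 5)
Your main argument is correct, and it analyses the same ratio as the paper, but it establishes strictness below $1$ by a different, more elementary mechanism. The paper studies $f(z)=\frac{1-\e^{-z}}{\sqrt z}$, uses that $f$ vanishes as $z\to0$ and $z\to\infty$, computes $f'$, identifies the unique critical point $z_0$ as the solution of $2z+1=\e^z$ (uniqueness via convexity of the exponential), notes $z_0>1$, and concludes $f(z)\le f(z_0)<1-\e^{-z_0}<1$, taking $\sigma_0=1-\e^{-z_0}$. You avoid the derivative entirely: attainment of the supremum follows from continuity and decay at both ends, and the strict bound $h(w)<1$ for every $w>0$ follows from $1-\e^{-w^2}\le w^2$ on $(0,1)$ and $1-\e^{-w^2}<1$ on $[1,\infty)$; combined with attainment at an interior point this forces $\sigma_0<1$. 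Your route is more self-contained (no critical-point equation, no convexity argument) and yields the sharp constant $\sigma_0=\sup h$, while the paper's yields an explicit though non-sharp closed-form constant tied to the maximiser. Two small corrections: at $w=1$ your chain ``$h(w)\le\frac1w<1$'' degenerates to ``$\le1$'', so you should invoke the strict inequality $1-\e^{-w^2}<1$ to get $h(w)<\frac1w\le1$ there; and the alternative you mention in passing does not work, since $1-\e^{-1}\approx0.632$ is in fact \emph{smaller} than the true maximum of $f$ (attained near $z_0\approx1.26$, where $f(z_0)\approx0.64$), so $1-\e^{-z}\le(1-\e^{-1})\sqrt z$ fails for $z$ near $1.25$. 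As you do not rely on that aside, the proof you actually present is sound.
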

\begin{proof}
We consider the function $f\colon(0,\infty)\to(0,\infty)$, $f(z)=\frac{1-\e^{-z}}{\sqrt{z}}$. Since $\lim_{z\to0}f(z)=0$ and $\lim_{z\to\infty}f(z)=0$, $f$ attains its maximum at the only critical point $z_0>0$ given as the unique solution of the equation
\[ 0=f'(z)= \frac{\e^{-z}}{\sqrt{z}}-\frac{1-\e^{-z}}{2 z^\frac{3}{2}} = \frac{\e^{-z}}{2 z^\frac{3}{2}}(2z+1-\e^z),\; z>0, \]
where the uniqueness follows from the convexity of the exponential function. Since $2z+1>\e^z$ at $z=1$, we know additionally that $z_0>1$.
Therefore, we have in particular
\[ f(z) \le f(z_0) < 1-\e^{-z_0} < 1\text{ for every }z>0, \]
which gives \autoref{eqExpSqrt} upon setting $\sigma_0\coloneqq1-\e^{-z_0}$.
\end{proof}

In order to show that Showalter's method is a regularisation method we verify now all the assumptions in \autoref{deGenerator}.

\begin{proposition}\label{thShoReg}
Let $\tilde\rho$ be the solution of \autoref{eq:Showalter_spectral} given in \autoref{eqShowSolSpectral}. Then, the functions $(\ra)_{\alpha>0}$ defined by
\begin{equation}\label{eqShoReg}
\ra(\lambda) \coloneqq \frac1\lambda\left(1-\tilde\rho(\tfrac1\alpha;\lambda)\right) = \frac{1-\e^{-\frac\lambda\alpha}}\lambda
\end{equation}
generate a regularisation method in the sense of \autoref{deGenerator}.
\end{proposition}
\begin{proof}
We verify that $(\ra)_{\alpha>0}$ satisfies the four conditions from \autoref{deGenerator}.

\begin{enumerate}
\item
We clearly have $\ra(\lambda)\le\frac1\lambda\le\frac2\lambda$. To prove the second part of the inequality \autoref{deGenerator}~\ref{enGeneratorBounded}, we use \autoref{lem:aux1} and find
\[ \ra(\lambda) \le \frac{\sigma_0}{\sqrt{\alpha \lambda}}, \]
where $\sigma_0\in(0,1)$ denotes the constant found in \autoref{lem:aux1}.
\item
Moreover, the function $\tra$, given by $\tra(\lambda)=\tilde\rho(\frac1\alpha;\lambda)=\e^{-\frac\lambda\alpha}$, is non-negative and monotonically decreasing.
\item
Since $\tra$ is monotonically decreasing and $\alpha\mapsto\tra(\lambda)$ is monotonically increasing, we can choose $\Tra\coloneqq\tra$ to fulfil \autoref{deGenerator}~\ref{enGeneratorErrorReg}.
\item
We have $\Tra(\alpha)=\tra(\alpha)=\e^{-1}<1$ for every $\alpha>0$.
\end{enumerate}
\end{proof}

Finally, we check that the common convergence rate functions are compatible with this regularisation method.

\begin{lemma}\label{le:sh_comp}
The functions $\phiH_\mu$ and $\phiL_\mu$ defined in \autoref{exCR} are for all $\mu>0$ compatible with the regularisation method $(r_\alpha)_{\alpha>0}$, defined by \autoref{eqShoReg}, in the sense of \autoref{deCompatible}.
\end{lemma}
\begin{proof}
According to \autoref{thTransComp}, it is enough to prove that $\phiH_\mu$ is for arbitrary $\mu>0$ compatible with $(r_\alpha)_{\alpha>0}$.
To see this, we remark that
\[ \Tra^2(\lambda) = \e^{-2\frac\lambda\alpha} = F_\mu\left(\frac{\phiH_\mu(\lambda)}{\phiH_\mu(\alpha)}\right)\text{ with }F_\mu(z)=\exp(-2z^{\frac1\mu}). \]
Since $\int_1^\infty\exp(-2z^{\frac1\mu})\d z = \mu\int_1^\infty\e^{-2w}w^{\mu-1}\d w < \infty$ for every $\mu>0$, $F_\mu$ is integrable and thus, $\phiH_\mu$ is compatible with $(r_\alpha)_{\alpha>0}$.
\end{proof}

We have thus shown that we can apply \autoref{thCR} to the regularisation method which is induced by \autoref{eq:Showalter}, that is, the regularisation method generated by the functions $(\ra)_{\alpha>0}$ defined in \autoref{eqShoReg}, and the convergence rate functions $\phiH_\mu$ or $\phiL_\mu$ for arbitrary $\mu>0$.
This gives us optimal convergence rates under variational source conditions as defined in \autoref{eqCRvar}, for example.

However, to compare with the literature, see \cite[Example~4.7]{EngHanNeu96}, we formulate the result under the slightly stronger standard source condition, see \autoref{thSsc}.

\begin{corollary}\label{thShoCR}
Let $y\in\mathcal R(L)$ be given such that the corresponding minimum norm solution $x^\dag\in \mathcal X$, fulfilling $L x^\dag=y$ and $\|x^\dag\|=\inf\{\norm{x}\mid L x=y\}$, satisfies for some $\mu>0$ the source condition
\begin{equation}\label{eqShoSsc}
\xdag\in\mathcal R\big((L^*L)^{\frac\mu2}\big).
\end{equation}

Then, if $\xi$ is the solution of the initial value problem in \autoref{eq:Showalter},
\begin{enumerate}
\item
there exists a constant $C_1>0$ such that
\[ \norm{\xi(t;y)-x^\dag}^2 \le C_1t^{-\mu}\text{ for all }t>0; \]
\item
there exists a constant $C_2>0$ such that
\[ \inf_{t>0}\norm{\xi(t;\tilde y)-x^\dag}^2 \le C_2\norm{\tilde y-y}^{\frac{2\mu}{\mu+1}}\text{ for all }\tilde y\in \mathcal Y; \]
and
\item
there exists a constant $C_3>0$ such that
\[ \norm{L\xi(t;y)-y}^2 \le C_3t^{-\mu-1}\text{ for all }t>0. \]
\end{enumerate}
\end{corollary}
\begin{proof}
We consider the regularisation method defined by the functions $(r_\alpha)_{\alpha>0}$ from \autoref{eqShoReg}. We have already seen in \autoref{thConvRatesSub} and \autoref{le:sh_comp} that the function $\phiH_\mu(\alpha) = \alpha^\mu$ is $G$-subhomogeneous in the sense of \autoref{eqCRsubhomogeneous} with $G(\gamma)=\gamma^\mu$ and compatible with the regularisation method given by $(\ra)_{\alpha>0}$.
\begin{enumerate}
\item
According to \autoref{thSsc} and \autoref{thCR} with the convergence rate function $\varphi=\phiH_\mu$, the source condition in \autoref{eqShoSsc} implies the existence of a constant~$C_d$ such that
\[ d(\alpha) \le C_d\phiH_\mu(\alpha) = C_d\alpha^\mu, \]
where $d$ is given by \autoref{eq:dD} with the regularised solution $\xa$ defined in \autoref{eq:reg} fulfilling according to \autoref{eqShoReg} and \autoref{eqShowSol} that
\begin{equation}\label{eqShoRegSol}
\xa(\tilde y)=\ra(L^*L)L^*\tilde y=\int_{(0,\norm{L}^2]}\frac{1-\e^{-\frac\lambda\alpha}}\lambda\d\mathbf E_\lambda L^*\tilde y = \xi(\tfrac1\alpha;\tilde y).
\end{equation}
Thus, by definition of $d$, we have that
\[ \norm{\xi(t;y)-x^\dag}^2 = \norm{x_{\frac1t}(y)-x^\dag}^2 = d(\tfrac1t) \le \frac{C_d}{t^\mu}\text{ for every }t>0. \]
\item
According to \autoref{thCR}, we also find a constant $C_{\tilde d}$ such that
\[ \tilde d(\delta) \le C_{\tilde d}\Phi[\phiH_\mu](\delta) = C_{\tilde d}\delta^{\frac{2\mu}{\mu+1}}, \]
where $\Phi$ denotes the noise-free to noisy transform defined in \autoref{deTransform} and $\tilde d$ is given by \autoref{eq:tilde_dD} with the regularised solution $\xa$ given by \autoref{eqShoRegSol}. Therefore, we have that
\[ \inf_{t>0}\norm{\xi(t;\tilde y)-x^\dag}^2 = \inf_{\alpha>0}\norm{\xi(\tfrac1\alpha;\tilde y)-x^\dag}^2\le \tilde d(\norm{\tilde y-y}) \le C_{\tilde d}\norm{\tilde y-y}^{\frac{2\mu}{\mu+1}}\text{ for every }\tilde y\in \mathcal Y. \]
\item
Furthermore, \autoref{thCR} implies that there is a constant $C_e>0$ such that $e(\lambda)\le C_e\phiH_\mu(\lambda)$. In particular, we then have $\lambda e(\lambda)\le \phiH_{\mu+1}(\lambda)$. And since $\phiH_{\mu+1}$ is by \autoref{le:sh_comp} compatible with $(r_\alpha)_{\alpha>0}$, we can apply \autoref{thCRImageSimple} and find a constant $C>0$ such that the function $q$, defined in \autoref{eqQ} with the regularised solution $\xa$ as in \autoref{eqShoRegSol}, fulfils
\[ q(\alpha) \le C\phiH_{\mu+1}(\alpha)\text{ for all }\alpha>0. \]
Thus, by definition of $q$, we have
\[ \norm{L\xi(t;y)-y}^2 = \norm{L x_{\frac1t}(y)-y}^2 = q(\tfrac1t) \le \frac C{t^{\mu+1}}\text{ for all }t>0. \]
\end{enumerate}
\end{proof}

We emphasise that for Showalter's method we did {\em not} make use of the extended theory involving envelopes of regularisation methods (cf.~\autoref{de:error_f}), and this theory could have been developed also with the regularisation results from \cite{AlbElbHooSch16}.

\section{Heavy Ball Dynamics}\label{seSecOrd}
The heavy ball method consists of the \autoref{eqODE} for $N=2$ and $a_1(t)=b$ for some $b>0$, that is, \autoref{eqODESecond}.

According to \autoref{thSpectral}, this corresponds to the initial value problems for every $\lambda > 0$
\begin{equation}\label{eqSecondOrder}
\begin{aligned}
\partial_{t t}\tilde{\rho}(t;\lambda) + b\partial_t\tilde{\rho}(t;\lambda) +\lambda \tilde{\rho}(t;\lambda)&=0 \text{ for all } t \in \ointerval{\infty},\\
\partial_t\tilde{\rho}(0;\lambda)&=0, \\
\tilde{\rho}(0;\lambda)&=1.
\end{aligned}
\end{equation}

\begin{lemma}\label{thSecOrdSpectral}
The solution of \autoref{eqSecondOrder} is given by
\begin{equation}\label{eqSecTildeRho}
\tilde{\rho}(t;\lambda) = 
\begin{cases}
\e^{-\frac{b t}2}\left(\cosh\left(\beta_-(\lambda)\frac{b t}2\right)+\frac1{\beta_-(\lambda)}\sinh\left(\beta_-(\lambda)\frac{b t}2\right)\right)&\text{if}\;\lambda\in(0,\frac{b^2}4),\\ 
\e^{-\frac{b t}2}\left(\cos\left(\beta_+(\lambda)\frac{b t}2\right)+\frac1{\beta_+(\lambda)}\sin\left(\beta_+(\lambda)\frac{b t}2\right)\right)&\text{if}\;\lambda\in(\frac{b^2}4,\infty),\\ 
\e^{-\frac{b t}2}(1+\frac{b t}2)&\text{if}\;\lambda=\frac{b^2}4,
\end{cases}
\end{equation}
where
\begin{equation}\label{eqSecBeta}
\beta_-(\lambda)=\sqrt{1-\frac{4\lambda}{b^2}}\text{ and }\beta_+(\lambda)=\sqrt{\frac{4\lambda}{b^2}-1},
\end{equation}
see \autoref{fgSecOrdGraph}. In particular, the solution of \autoref{eqODESecond} is given by
\begin{equation}\label{eqSecOrdSol}
\xi(t;\tilde y) = \int_{(0,\|L\|^2]}\frac{1-\tilde\rho(t;\lambda)}\lambda\d\mathbf E_\lambda L^*\tilde y,
\end{equation}
where $A\mapsto\mathbf E_A$ denotes the spectral measure of $L^*L$.
\end{lemma}
\begin{proof}
The characteristic equation of \autoref{eqSecondOrder} is
\[ z^2(\lambda)+b z(\lambda)+\lambda = 0 \]
and has the solutions
\[ z_1(\lambda)=-\frac b2-\sqrt{\frac{b^2}4-\lambda}\text{ and } z_2(\lambda)=-\frac b2+\sqrt{\frac{b^2}4-\lambda}. \]
Thus, for $\lambda<\frac{b^2}4$, we have the solution
\[ \tilde{\rho}(t;\lambda) = \e^{-\frac{b t}2}\left(C_1(\lambda)\cosh\left(t\sqrt{\frac{b^2}4-\lambda}\right)+C_2(\lambda)\sinh\left(t\sqrt{\frac{b^2}4-\lambda}\right)\right); \]
for $\lambda>\frac{b^2}4$, we get the oscillating solution
\[ \tilde{\rho}(t;\lambda) = \e^{-\frac{b t}2}\left(C_1(\lambda)\cos\left(t\sqrt{\lambda-\frac{b^2}4}\right)+C_2(\lambda)\sin\left(t\sqrt{\lambda-\frac{b^2}4}\right)\right); \]
and for $\lambda=\frac{b^2}4$, we have
\[ \tilde{\rho}(t;\lambda) = \e^{-\frac{b t}2}(C_1(\lambda)+C_2(\lambda)t). \]
Plugging in the initial condition $\tilde{\rho}(0;\lambda)=1$, we find that $C_1(\lambda)=1$ for all $\lambda>0$, and the initial condition $\partial_t\tilde{\rho}(0;\lambda)=0$ then implies
\begin{align*}
C_2(\lambda)\sqrt{\frac{b^2}4-\lambda}&=\frac b2\text{ for }\lambda<\frac{b^2}4, \\
C_2(\lambda)\sqrt{\lambda-\frac{b^2}4}&=\frac b2\text{ for }\lambda>\frac{b^2}4,\text{ and}\\
C_2(\tfrac{b^2}4)&=\frac b2.
\end{align*}
Moreover, since $\tilde\rho$ is smooth and the unique solution of \autoref{eqSecondOrder}, the function $\xi$ defined in \autoref{eqSecOrdSol} is by \autoref{thSpectral} the unique solution of \autoref{eqODESecond}.
\end{proof}

\begin{figure}
\hfil
\parbox{0.45\textwidth}{\includegraphics{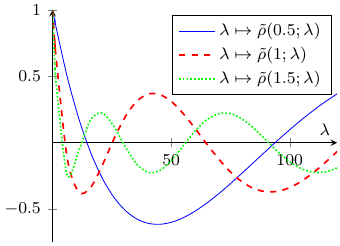}}
\hfil
\parbox{0.45\textwidth}{\includegraphics{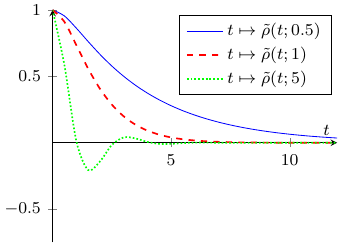}}
\hfil
\caption{Graphs for the function $\tilde{\rho}$ for the value $b=2$.
The non-monotonicity of the functions $t\mapsto\tilde{\rho}(t;\lambda)$ and $\lambda\mapsto\tilde{\rho}(t;\lambda)$ requires to compare the rates with the regularisation methods derived from the envelope.}\label{fgSecOrdGraph}
\end{figure}

To see that this solution gives rise to a regularisation method as introduced in \autoref{deGenerator}, we first verify that the function $\lambda\mapsto\tilde\rho(t;\lambda)$, which corresponds to the error function $\tra$ in \autoref{deGenerator}, is non-negative and monotonically decreasing for sufficiently small values of $\lambda$ as required for $\tra$ in \autoref{deGenerator}~\ref{enGeneratorError}.

\begin{lemma}\label{thSecOrdDecreasing}
The function $\lambda\mapsto \tilde{\rho}(t;\lambda)$ defined by \autoref{eqSecTildeRho} is for every $t\in(0,\infty)$ non-negative and monotonically decreasing on the interval $(0,\frac{b^2}4+\frac{\pi^2}{4t^2})$.
\end{lemma}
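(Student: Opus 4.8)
The plan is to exploit that, for each fixed $\lambda$, the profile $\tilde\rho(\cdot;\lambda)$ solves the linear second order equation \eqref{eqSecondOrder}, so that a difference of two profiles solves an inhomogeneous version of the same equation whose sign can be read off from the variation-of-constants (Duhamel) formula. Concretely, I would first record the impulse response $G(\cdot;\lambda)$ of $\partial_{tt}+b\partial_t+\lambda$, i.e.\ the solution of that homogeneous equation with $G(0;\lambda)=0$ and $\partial_t G(0;\lambda)=1$; from the characteristic roots already computed in the proof of \eqref{eqSecTildeRho} one gets $G(s;\lambda)=\e^{-bs/2}\,\frac{\sin(s\sqrt{\lambda-b^2/4})}{\sqrt{\lambda-b^2/4}}$ in the underdamped regime, with the usual $\sinh$ and linear expressions when $\lambda\le\tfrac{b^2}4$.

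To treat the three damping regimes in one stroke, I would note that $\cos(s\sqrt\mu)$ and $\frac{\sin(s\sqrt\mu)}{\sqrt\mu}$ are the restrictions to $\mu=\lambda-\tfrac{b^2}4$ of the entire functions $c(s;\mu)=\sum_{k\ge0}\frac{(-\mu s^2)^k}{(2k)!}$ and $\sigma(s;\mu)=s\sum_{k\ge0}\frac{(-\mu s^2)^k}{(2k+1)!}$, so that uniformly $\tilde\rho(s;\lambda)=\e^{-bs/2}\bigl(c(s;\mu)+\tfrac b2\sigma(s;\mu)\bigr)$ and $G(s;\lambda)=\e^{-bs/2}\sigma(s;\mu)$. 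The key sign facts are then: for $\mu\le0$ all series coefficients are non-negative, hence $c(s;\mu)\ge1$ and $\sigma(s;\mu)\ge s$; and for $0<\mu<\tfrac{\pi^2}{4t^2}$ and $0\le s\le t$ one has $0\le s\sqrt\mu\le t\sqrt\mu<\tfrac\pi2$, hence $c(s;\mu)=\cos(s\sqrt\mu)>0$ and $\sigma(s;\mu)=\frac{\sin(s\sqrt\mu)}{\sqrt\mu}\ge0$. Thus for every $\lambda\in\bigl(0,\tfrac{b^2}4+\tfrac{\pi^2}{4t^2}\bigr)$ we obtain $\tilde\rho(s;\lambda)>0$ and $G(s;\lambda)\ge0$ (with $G(s;\lambda)>0$ for $s>0$) for all $s\in[0,t]$; taking $s=t$ already yields the asserted non-negativity of $\lambda\mapsto\tilde\rho(t;\lambda)$.

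For the monotonicity I would fix $0<\lambda_1<\lambda_2<\tfrac{b^2}4+\tfrac{\pi^2}{4t^2}$ and set $v=\tilde\rho(\cdot;\lambda_1)-\tilde\rho(\cdot;\lambda_2)$; subtracting the two instances of \eqref{eqSecondOrder} shows that $v''+bv'+\lambda_1v=(\lambda_2-\lambda_1)\tilde\rho(\cdot;\lambda_2)$ with $v(0)=v'(0)=0$, whence by Duhamel's formula $v(t)=(\lambda_2-\lambda_1)\int_0^tG(t-s;\lambda_1)\,\tilde\rho(s;\lambda_2)\d s$. Since $\lambda_1\le\lambda_2$, both $G(\cdot;\lambda_1)$ and $\tilde\rho(\cdot;\lambda_2)$ are non-negative on $[0,t]$ by the previous step, so the integrand is non-negative and $v(t)\ge0$, i.e.\ $\tilde\rho(t;\lambda_1)\ge\tilde\rho(t;\lambda_2)$; as $t>0$ was arbitrary this is the claimed monotonicity. (Equivalently one could differentiate \eqref{eqSecondOrder} in $\lambda$ and obtain $\partial_\lambda\tilde\rho(t;\lambda)=-\int_0^tG(t-s;\lambda)\tilde\rho(s;\lambda)\d s\le0$, the joint $\lambda$-smoothness being clear from the explicit formula for $\tilde\rho$.)

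The only non-routine point — and hence the main obstacle — is bookkeeping the three damping regimes uniformly together with the observation that $t\sqrt{\lambda-b^2/4}<\tfrac\pi2$, i.e.\ precisely $\lambda<\tfrac{b^2}4+\tfrac{\pi^2}{4t^2}$, is the exact window in which the elementary bounds ``$\cos>0$ and $\sin\ge0$ on $[0,\tfrac\pi2)$'' keep both $\tilde\rho(\cdot;\lambda)$ strictly positive and $G(\cdot;\lambda)$ non-negative throughout $[0,t]$; once that is in place, the Duhamel representation makes the sign of $v(t)$ immediate.
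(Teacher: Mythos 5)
Your proposal is correct, and it takes a genuinely different route from the paper. The paper works directly with the closed-form solution \autoref{eqSecTildeRho}: it writes $\tilde\rho(t;\lambda)=\e^{-bt/2}g_{bt/2}(\beta_-(\lambda))$ resp.\ $\e^{-bt/2}G_{bt/2}(\beta_+(\lambda))$, computes $\partial_\lambda\tilde\rho$ via the monotonicity of $g_\tau$ and $G_\tau$ in $\beta$ (using the elementary inequalities $\tanh z\le z$ and $\tan z\ge z$), treats the overdamped and underdamped regimes separately, and bridges the critical value $\lambda=\frac{b^2}4$ by continuity. You instead prove the $\lambda$-monotonicity structurally: the difference of two spectral profiles solves the same damped oscillator equation with a non-negative right-hand side and zero initial data, so Duhamel's formula reduces everything to the non-negativity of the impulse response $G(\cdot;\lambda_1)$ and of $\tilde\rho(\cdot;\lambda_2)$ on $[0,t]$, which in your unified analytic representation $c(s;\mu)$, $\sigma(s;\mu)$ with $\mu=\lambda-\frac{b^2}4$ comes down to $\cos>0$ and $\sin\ge0$ on $[0,\frac\pi2)$ — and the window $\lambda<\frac{b^2}4+\frac{\pi^2}{4t^2}$ is exactly the condition $t\sqrt{\lambda-\frac{b^2}4}<\frac\pi2$. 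Your identifications ($\tilde\rho(s;\lambda)=\e^{-bs/2}(c(s;\mu)+\frac b2\sigma(s;\mu))$, $G(s;\lambda)=\e^{-bs/2}\sigma(s;\mu)$, and the inhomogeneous equation for $v$ with the stated initial data) all check out, so the argument is complete. What your approach buys is a single case-free treatment of all three damping regimes and no continuity patch at $\lambda=\frac{b^2}4$, plus a conceptual explanation of why the interval is what it is; the paper's direct differentiation is more elementary in that it never introduces the impulse response and yields the sign of $\partial_\lambda\tilde\rho$ pointwise rather than only the comparison $\tilde\rho(t;\lambda_1)\ge\tilde\rho(t;\lambda_2)$ (which, of course, is all the lemma asserts).
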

\begin{proof}
We proof this separately for $\lambda\in(0,\tfrac{b^2}4)$ and for $\lambda\in(\frac{b^2}4,\frac{b^2}4+\frac{\pi^2}{4t^2})$.
\begin{itemize}
\item
We remark that the function
\[ g_\tau\colon(0,\infty)\to\R,\; g_\tau(\beta)=\cosh(\beta\tau)+\frac{\sinh(\beta\tau)}\beta, \]
is non-negative and fulfils for arbitrary $\tau>0$ that
\[ g_\tau'(\beta) = \tau\sinh(\beta\tau)+\frac{\tau\cosh(\beta\tau)}\beta-\frac{\sinh(\beta\tau)}{\beta^2} = \tau\sinh(\beta\tau)+\frac{\cosh(\beta\tau)}{\beta^2}(\beta\tau-\tanh(\beta\tau)) \ge 0, \]
since $\tanh(z)\le z$ for all $z\ge0$.
Thus, writing the function $\tilde\rho$ for $\lambda\in(0,\frac{b^2}4)$ with the function $\beta_-$ given by \autoref{eqSecBeta} in the form
\[\tilde{\rho}(t;\lambda)=\e^{-\frac{b t}2}g_{\frac{b t}2}(\beta_-(\lambda)), \]
we find that
\[ \partial_\lambda \tilde{\rho}(t;\lambda) = \e^{-\frac{b t}2}g_{\frac{b t}2}'(\beta_-(\lambda))\beta_-'(\lambda) \le 0, \]
since $\beta_-'(\lambda)=-\frac2{b^2\beta_-(\lambda)}\le0$. Therefore, the function $\lambda\mapsto\tilde\rho(t;\lambda)$ is non-negative and monotonically decreasing on $(0,\frac{b^2}4)$.

\item
Similarly, we consider for $\lambda\in(\frac{b^2}4,\infty)$ the function
\[ G_\tau\colon(0,\infty)\to\R,\; G_\tau(\beta)=\cos(\beta\tau)+\frac{\sin(\beta\tau)}\beta, \]
for arbitrary $\tau>0$. Since $\lim_{\beta\to0}G_\tau(\beta)=1+\tau>0$ and since the smallest zero $\beta_\tau$ of $G_\tau$ is the smallest non-negative solution of the equation $\tan(\beta\tau)=-\beta$, implying that $\beta_\tau\tau\in(\frac\pi2,\pi)$, we have that $G_\tau(\beta)\ge0$ for all $\beta\in(0,\frac\pi{2\tau})\subset(0,\beta_\tau)$.

Moreover, the derivative of $G_\tau$ satisfies for every $\beta\in(0,\tfrac\pi{2\tau})$ that
\[ G_\tau'(\beta) = -\tau\sin(\beta\tau)+\frac{\tau\cos(\beta\tau)}\beta-\frac{\sin(\beta\tau)}{\beta^2} = -\frac{\cos(\beta\tau)}{\beta^2}\left((\beta^2\tau+1)\tan(\beta\tau)-\beta\tau\right) \le 0, \]
since $\tan(z)\ge z$ for every $z\ge0$. Therefore, we find for the function $\tilde\rho$ on the domain $(0,\infty)\times(\frac{b^2}4,\infty)$, where it has the form
\[\tilde{\rho}(t;\lambda)=\e^{-\frac{b t}2}G_{\frac{b t}2}(\beta_+(\lambda)) \]
with $\beta_+$ given by \autoref{eqSecBeta}, that
\[ \tilde\rho(t;\lambda)\ge0\text{ and }\partial_\lambda \tilde{\rho}(t;\lambda) = \e^{-\frac{b t}2}G_{\frac{b t}2}'(\beta_+(\lambda))\beta_+'(\lambda) \le 0\text{ for }\beta_+(\lambda)<\frac\pi{b t},\text{ that is, for }\lambda<\frac{b^2}4+\frac{\pi^2}{4t^2}, \]
since $\beta_+'(\lambda)=\frac2{b^2\beta_+(\lambda)}\ge0$.
\end{itemize}

Because $\tilde{\rho}$ is continuous, this implies that $\lambda\mapsto \tilde{\rho}(t;\lambda)$ is for every $t\in(0,\infty)$ non-negative and monotonically decreasing on $(0,\frac{b^2}4+\frac{\pi^2}{4t^2})$.
\end{proof}

In a next step, we introduce the function $\tilde P(t;\cdot)$ as a correspondence to the upper bound $\Tra$ and show that it fulfils the properties necessary for \autoref{deGenerator}~\ref{enGeneratorErrorReg}.

\begin{lemma}\label{thSecOrdEnvelope}
We define the function
\begin{equation}\label{eqSecTildeP}
\tilde P(t;\lambda) = 
\begin{cases}
\e^{-\frac{b t}2}\left(\cosh\left(\beta_-(\lambda)\frac{b t}2\right)+\frac1{\beta_-(\lambda)}\sinh\left(\beta_-(\lambda)\frac{b t}2\right)\right)&\text{if}\;\lambda\in(0,\frac{b^2}4),\\ 
\e^{-\frac{b t}2}(1+\frac{b t}2)&\text{if}\; \lambda\in [\frac{b^2}4,\infty),
\end{cases}
\end{equation}
where the function $\beta_-$ shall be given by \autoref{eqSecBeta}.

Then, $\tilde P$ is an upper bound for the absolute value of the function $\tilde\rho$ defined by \autoref{eqSecTildeRho}: $\tilde P\ge|\tilde{\rho}|$.
\end{lemma}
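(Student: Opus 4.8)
The plan is to check the inequality $\tilde P(t;\lambda)\ge|\tilde\rho(t;\lambda)|$ separately on the three spectral ranges $(0,\tfrac{b^2}4)$, $\{\tfrac{b^2}4\}$, and $(\tfrac{b^2}4,\infty)$ on which \autoref{eqSecTildeRho} and \autoref{eqSecTildeP} take their different closed forms, in each case after cancelling the common strictly positive factor $\e^{-\frac{bt}2}$.

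On $(0,\tfrac{b^2}4)$ the functions $\tilde\rho$ and $\tilde P$ are given by the very same formula, so $\tilde P=\tilde\rho$ there; moreover $\beta_-(\lambda)>0$ and $\cosh$, $\sinh$ of non-negative arguments are non-negative, so $\tilde\rho(t;\lambda)\ge0$ (this non-negativity is already contained in the proof of \autoref{thSecOrdDecreasing}, where $g_\tau\ge0$ was established), whence $|\tilde\rho|=\tilde\rho=\tilde P$. At $\lambda=\tfrac{b^2}4$ we likewise have $\tilde\rho(t;\lambda)=\tilde P(t;\lambda)=\e^{-\frac{bt}2}(1+\tfrac{bt}2)>0$, so equality holds again.

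The only range carrying any content is $\lambda\in(\tfrac{b^2}4,\infty)$, where $\tilde P(t;\lambda)=\e^{-\frac{bt}2}(1+\tfrac{bt}2)$ while $\tilde\rho$ is oscillatory. Dividing by $\e^{-\frac{bt}2}$, the claim becomes
\[ \left|\cos\Big(\beta_+(\lambda)\tfrac{bt}2\Big)+\frac1{\beta_+(\lambda)}\sin\Big(\beta_+(\lambda)\tfrac{bt}2\Big)\right|\le 1+\frac{bt}2. \]
I would set $x:=\beta_+(\lambda)\tfrac{bt}2\ge0$ and $s:=\tfrac1{\beta_+(\lambda)}>0$, so that $\tfrac{bt}2=sx$, and thereby reduce the inequality to the elementary statement $|\cos x+s\sin x|\le 1+sx$ for all $x\ge0$, $s>0$. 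This in turn follows from the triangle inequality together with $|\cos x|\le1$ and $|\sin x|\le x$ for $x\ge0$:
\[ |\cos x+s\sin x|\le|\cos x|+s|\sin x|\le 1+sx. \]

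The only thing one has to get right is exactly this change of variables. The amplitude $\sqrt{1+1/\beta_+(\lambda)^2}$ of the oscillatory part of $\tilde\rho$ blows up as $\lambda\downarrow\tfrac{b^2}4$, so a bare amplitude estimate is useless; but the undamped envelope $1+\tfrac{bt}2$ grows linearly in $t$, and writing $\tfrac{bt}2=sx$ absorbs the large factor $s=1/\beta_+(\lambda)$ into the $t$-growth, after which the crude triangle-inequality bound already suffices. In particular no analysis of the zeros of $\cos x+s\sin x$ is needed, in contrast to the proof of \autoref{thSecOrdDecreasing}.
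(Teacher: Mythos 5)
Your proposal is correct and follows essentially the same route as the paper: for $\lambda\le\frac{b^2}4$ the two functions coincide (and are non-negative), and for $\lambda>\frac{b^2}4$ the bound follows from $\abs{\cos z}\le1$ and $\abs{\sin z}\le\abs z$. Your substitution $x=\beta_+(\lambda)\frac{bt}2$, $s=\frac1{\beta_+(\lambda)}$ is just a repackaging of the paper's direct estimate $\frac1{\beta_+(\lambda)}\abs{\sin(\beta_+(\lambda)\frac{bt}2)}\le\frac{bt}2$, so the two arguments are the same.
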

\begin{proof}
Since $\tilde{\rho}(t;\lambda)=\tilde P(t;\lambda)$ for $\lambda\le\frac{b^2}4$ for every $t>0$, we only need to consider the case $\lambda>\frac{b^2}4$. Using that $\left|\cos(z)\right|\le1$ and $\left|\sin(z)\right|\le|z|$ for all $z\in\R$, we find with $\beta_+$ as in \autoref{eqSecBeta} for every $\lambda>\frac{b^2}4$ and every $t>0$ that
\[ |\tilde{\rho}(t;\lambda)|=\e^{-\frac{b t}2}\left|\cos\left(\beta_+(\lambda)\frac{b t}2\right)+\frac1{\beta_+(\lambda)}\sin\left(\beta_+(\lambda)\frac{b t}2\right)\right| \le \e^{-\frac{b t}2}\left(1+\frac{b t}2\right) = \tilde P(t;\lambda). \]
\end{proof}

\begin{lemma}\label{thSecOrdUpper}
Let $\tilde P$ be given by \autoref{eqSecTildeP}. Then, $\lambda\mapsto\tilde P(t;\lambda)$ is monotonically decreasing and $t\mapsto \tilde P(t;\lambda)$ is strictly decreasing.
\end{lemma}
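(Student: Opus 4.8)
The plan is to split along the two branches of $\tilde P$ from \autoref{eqSecTildeP}, namely $\lambda\in\ointerval{b^2/4}$ (where $\tilde P(t;\lambda)=\tilde\rho(t;\lambda)$) and $\lambda\in[\tfrac{b^2}4,\infty)$ (where $\tilde P(t;\lambda)=\e^{-\frac{bt}2}(1+\tfrac{bt}2)$ is independent of $\lambda$), and to reuse the auxiliary function $g_\tau(\beta)=\cosh(\beta\tau)+\frac{\sinh(\beta\tau)}\beta$ introduced in the proof of \autoref{thSecOrdDecreasing}, since $\tilde P(t;\lambda)=\e^{-\frac{bt}2}g_{\frac{bt}2}(\beta_-(\lambda))$ on the first interval.

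For the monotonicity in $\lambda$, I would argue as follows. On $[\tfrac{b^2}4,\infty)$ there is nothing to show since $\tilde P(t;\cdot)$ is constant there. On $\ointerval{b^2/4}$ the function $\tilde P(t;\cdot)$ agrees with $\tilde\rho(t;\cdot)$, which was already shown to be monotonically decreasing on $(0,\tfrac{b^2}4+\tfrac{\pi^2}{4t^2})\supset\ointerval{b^2/4}$ in \autoref{thSecOrdDecreasing}. To glue the two pieces at $\lambda=\tfrac{b^2}4$, I would use that $\lim_{\beta\to0}g_\tau(\beta)=1+\tau$, so $\tilde P(t;\lambda)\to\e^{-\frac{bt}2}(1+\tfrac{bt}2)$ as $\lambda\to(\tfrac{b^2}4)^-$; combined with the decrease on $\ointerval{b^2/4}$ this gives $\tilde P(t;\lambda_1)\ge\e^{-\frac{bt}2}(1+\tfrac{bt}2)=\tilde P(t;\lambda_2)$ for every $\lambda_1\in\ointerval{b^2/4}$ and $\lambda_2\ge\tfrac{b^2}4$, which yields monotonic decrease on all of $\ointerval{\infty}$.

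For the strict monotonicity in $t$, for $\lambda\ge\tfrac{b^2}4$ a direct differentiation gives $\partial_t\tilde P(t;\lambda)=-\tfrac{b^2t}4\e^{-\frac{bt}2}<0$ for $t>0$. For $\lambda\in\ointerval{b^2/4}$, writing $\tau=\tfrac{bt}2$ and $\beta=\beta_-(\lambda)$, the product rule gives $\partial_t\tilde P(t;\lambda)=\tfrac b2\e^{-\tau}\bigl(\partial_\tau g_\tau(\beta)-g_\tau(\beta)\bigr)$, and since $\partial_\tau g_\tau(\beta)=\beta\sinh(\beta\tau)+\cosh(\beta\tau)$ one computes $\partial_\tau g_\tau(\beta)-g_\tau(\beta)=\bigl(\beta-\tfrac1\beta\bigr)\sinh(\beta\tau)=-\tfrac{4\lambda}{b^2\beta}\sinh(\beta\tau)$, using $\beta_-(\lambda)^2-1=-\tfrac{4\lambda}{b^2}$. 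As $\beta>0$ and $\sinh(\beta\tau)>0$ for $t>0$, this derivative is strictly negative for $t>0$, so integrating shows $t\mapsto\tilde P(t;\lambda)$ is strictly decreasing on $[0,\infty)$.

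There is no genuine obstacle here; the computations are routine, and the only point requiring slight care is the matching of the two branches of $\tilde P$ at $\lambda=\tfrac{b^2}4$ in the $\lambda$-monotonicity argument, which is handled cleanly by the limit $\lim_{\beta\to0}g_\tau(\beta)=1+\tau$.
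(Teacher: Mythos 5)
Your proposal is correct and follows essentially the same route as the paper: the $t$-monotonicity comes from differentiating both branches (your computation $\partial_\tau g_\tau(\beta)-g_\tau(\beta)=(\beta-\frac1\beta)\sinh(\beta\tau)$ is exactly the paper's derivative formula), and the $\lambda$-monotonicity from \autoref{thSecOrdDecreasing} on the hyperbolic branch together with constancy on $[\frac{b^2}4,\infty)$. The only cosmetic difference is the matching at $\lambda=\frac{b^2}4$: you glue via $\lim_{\beta\to0}g_\tau(\beta)=1+\tau$, whereas the paper simply notes that $\tilde P(t;\cdot)=\tilde\rho(t;\cdot)$ on the closed interval $(0,\frac{b^2}4]$ and that \autoref{thSecOrdDecreasing} already covers that endpoint.
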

\begin{proof}
For the derivative of $\tilde P$ with respect to $t$, we get
\[ \partial_t \tilde P(t;\lambda) = 
	\begin{cases}
		\frac b2\e^{-\frac{b t}2}\left(\beta_-(\lambda)-\frac1{\beta_-(\lambda)}\right)\sinh\left(\beta_-(\lambda)\frac{b t}2\right)
			&\text{if }\lambda\in(0,\frac{b^2}4),\\
		-\frac{b^2t}4\e^{-\frac{b t}2}
			&\text{if }\lambda\in[\frac{b^2}4,\infty),
	\end{cases} \]
with $\beta_-$ defined in \autoref{eqSecBeta}; and since $\beta_-(\lambda)\in(0,1)$ for every $\lambda\in(0,\frac{b^2}4)$, we thus have $\partial_t \tilde P(t;\lambda)<0$ for every $t>0$ and every $\lambda>0$.

Since $\tilde P(t;\lambda)=\tilde\rho(t;\lambda)$ for $\lambda\in(0,\frac{b^2}4]$, where $\tilde\rho$ denotes the solution of \autoref{eqSecondOrder}, given by \autoref{eqSecTildeRho}, we already know from \autoref{thSecOrdDecreasing} that $\lambda\mapsto \tilde P(t;\lambda)$ is monotonically decreasing on $(0,\frac{b^2}4]$. And since $\lambda\mapsto \tilde P(t;\lambda)$ is constant on $[\frac{b^2}4,\infty)$, it is monotonically decreasing on $(0,\infty)$.
\end{proof}

To verify later the compatibility of the convergence rate functions $\phiH_\mu$ and $\phiL_\mu$ introduced in \autoref{eqCRHoelder} and \autoref{eqCRLog}, we derive here an appropriate upper bound for $\tilde P$.

\begin{lemma}\label{thSecOrdEnvUpper}
We have for every $\Lambda>0$ that the function $\tilde P$ defined in \autoref{eqSecTildeP} can be bounded from above by
\[ \tilde P(t;\lambda) \le \Psi_\Lambda(\lambda t)\text{ for all }t>0,\;\lambda\in(0,\Lambda], \]
where
\begin{equation}\label{eqPsi}
\Psi_\Lambda(z) = \max\left\{2\e^{-\frac z b},\e^{-\frac{b z}{2\Lambda}}\left(1+\frac{b z}{2\Lambda}\right)\right\}.
\end{equation}
\end{lemma}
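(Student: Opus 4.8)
The plan is to prove the bound on the two ranges of $\lambda$ appearing in the definition \autoref{eqSecTildeP} of $\tilde P$ separately, writing $z=\lambda t$ throughout and recording at the outset two elementary inequalities that come for free: whenever $\lambda\le\Lambda$ we have $t=z/\lambda\ge z/\Lambda$, and whenever $\lambda<\tfrac{b^2}4$ we have $t=z/\lambda>\tfrac{4z}{b^2}$, i.e.\ $\tfrac{bt}4>\tfrac zb$. I would also note at the start that $g(s):=\e^{-bs/2}(1+\tfrac{bs}2)$ is strictly decreasing on $(0,\infty)$, since $g'(s)=-\tfrac{b^2s}4\e^{-bs/2}<0$, and that $u\mapsto\tfrac{\sinh u}u$ is strictly increasing on $(0,\infty)$ (its derivative has numerator $u\cosh u-\sinh u$, which vanishes at $0$ and has positive derivative $u\sinh u$).

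For $\lambda\in[\tfrac{b^2}4,\Lambda]$ we have $\tilde P(t;\lambda)=g(t)$, so $t\ge z/\Lambda$ and the monotonicity of $g$ give $\tilde P(t;\lambda)\le g(z/\Lambda)=\e^{-bz/(2\Lambda)}\bigl(1+\tfrac{bz}{2\Lambda}\bigr)\le\Psi_\Lambda(z)$, the second term in the maximum in \autoref{eqPsi}.

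For $\lambda\in(0,\tfrac{b^2}4)$ I would set $\beta=\beta_-(\lambda)=\sqrt{1-4\lambda/b^2}\in(0,1)$ (see \autoref{eqSecBeta}) and $\tau=\tfrac{bt}2$, so that $\tilde P(t;\lambda)=\e^{-\tau}\bigl(\cosh(\beta\tau)+\tfrac1\beta\sinh(\beta\tau)\bigr)$ and, since $\lambda=\tfrac{b^2}4(1-\beta^2)$, one has $z=\tfrac{b\tau}2(1-\beta^2)$ and hence $(1-\beta)\tau=\tfrac{2z}{b(1+\beta)}\ge\tfrac zb$. Then I would split at $\beta=\tfrac12$. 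If $\beta\ge\tfrac12$, then $\tfrac1\beta\ge1$ yields $\cosh(\beta\tau)+\tfrac1\beta\sinh(\beta\tau)\le\tfrac1\beta\bigl(\cosh(\beta\tau)+\sinh(\beta\tau)\bigr)=\tfrac1\beta\e^{\beta\tau}\le2\e^{\beta\tau}$, so $\tilde P(t;\lambda)\le2\e^{-(1-\beta)\tau}\le2\e^{-z/b}\le\Psi_\Lambda(z)$. If $\beta<\tfrac12$, I would instead use that $\beta'\mapsto\cosh(\beta'\tau)+\tfrac1{\beta'}\sinh(\beta'\tau)$ is increasing on $(0,\infty)$ — because $\cosh(\beta'\tau)$ is increasing in $\beta'$ and $\tfrac{\sinh(\beta'\tau)}{\beta'}=\tau\tfrac{\sinh(\beta'\tau)}{\beta'\tau}$ is increasing in $\beta'$ by the monotonicity of $\tfrac{\sinh u}u$ — to bound it by its value at $\beta'=\tfrac12$, namely $\cosh(\tfrac\tau2)+2\sinh(\tfrac\tau2)=\tfrac12(3\e^{\tau/2}-\e^{-\tau/2})<\tfrac32\e^{\tau/2}$; therefore $\tilde P(t;\lambda)<\tfrac32\e^{-\tau/2}=\tfrac32\e^{-bt/4}<\tfrac32\e^{-z/b}<2\e^{-z/b}\le\Psi_\Lambda(z)$, where the step $\e^{-bt/4}<\e^{-z/b}$ is exactly the free inequality $\tfrac{bt}4>\tfrac zb$.

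The one genuinely delicate point is the blow-up of the prefactor $\tfrac1{\beta_-(\lambda)}$ as $\lambda\uparrow\tfrac{b^2}4$ — the passage to critical damping — which the split at $\beta=\tfrac12$ absorbs: for $\beta\ge\tfrac12$ the prefactor is harmlessly $\le2$, while for $\beta<\tfrac12$ one trades it for the monotonicity estimate above at the cost of only the constant $\tfrac32<2$, and the inequality $t>4z/b^2$, available precisely because $\lambda<\tfrac{b^2}4$, converts $\e^{-bt/4}$-decay into the required $\e^{-z/b}$-decay. The remaining ingredients — monotonicity of $\e^{-bs/2}(1+\tfrac{bs}2)$ and of $\tfrac{\sinh u}u$, and the identity $\cosh(\beta\tau)+\sinh(\beta\tau)=\e^{\beta\tau}$ — are routine.
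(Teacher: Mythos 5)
Your proposal is correct, and while your treatment of the range $\lambda\in[\tfrac{b^2}4,\Lambda]$ coincides with the paper's (monotone decrease of $s\mapsto\e^{-bs/2}(1+\tfrac{bs}2)$ together with $t\ge\lambda t/\Lambda$), your handling of $\lambda\in(0,\tfrac{b^2}4)$ is genuinely different. The paper bounds $\cosh$ and $\tfrac1{\beta_-}\sinh$ by exponentials in one stroke, states the intermediate estimate $\tilde P(t;\lambda)\le2\exp\bigl((\beta_-(\lambda)-1)\tfrac{bt}2\bigr)$, and then applies $\sqrt{1-z}\le1-\tfrac z2$; you instead split at $\beta_-=\tfrac12$, absorbing the prefactor $\tfrac1{\beta_-}\le2$ directly when $\beta_-\ge\tfrac12$, and when $\beta_-<\tfrac12$ using the monotonicity in $\beta_-$ of $\cosh(\beta_-\tau)+\tfrac1{\beta_-}\sinh(\beta_-\tau)$ together with $\tfrac{bt}4>\tfrac zb$. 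Both routes land on the same final bound $2\e^{-\lambda t/b}$ on this range, but your case distinction is in fact the more careful treatment of the $\tfrac1{\beta_-}$ blow-up at critical damping: the paper's displayed intermediate inequality does not hold uniformly (for instance $\beta_-=\tfrac1{10}$ and $\tfrac{bt}2=10$ give $\e^{-10}(\cosh 1+10\sinh 1)\approx6.0\cdot10^{-4}$, which exceeds $2\e^{-9}\approx2.5\cdot10^{-4}$), since $\tfrac{\sinh u}u\le\e^u$ only yields $\tfrac1{\beta_-}\sinh(\beta_-\tau)\le\tau\,\e^{\beta_-\tau}$ rather than $\e^{\beta_-\tau}$; the lemma's conclusion is nevertheless true, and your argument — whose remaining ingredients (monotonicity of $\e^{-bs/2}(1+\tfrac{bs}2)$ and of $\tfrac{\sinh u}u$, the identity $\cosh+\sinh=\exp$, and the elementary inequalities $t\ge z/\Lambda$ and $t>4z/b^2$) are all used correctly — establishes it cleanly, effectively repairing that step.
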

\begin{proof}
We consider the two cases $\lambda\in(0,\frac{b^2}4)$ and $\lambda\in[\frac{b^2}4,\Lambda]$ separately.
\begin{itemize}
\item
For $\lambda\in(0,\frac{b^2}4)$, we use the two inequalities $\cosh(z)\le\e^z$ and $\frac{\sinh(z)}z\le\e^z$ for all $z\ge0$, where the latter follows from the fact that $f(z)=2z\e^z(\e^z-\frac{\sinh(z)}z)=(2z-1)\e^{2z}+1$ is because of $f'(z)=4z\e^{2z}\ge0$ monotonically increasing on $[0,\infty)$ and thus fulfils $f(z)\ge f(0)=0$ for every $z\ge0$. With this, we find from \autoref{eqSecTildeP} that
\[ \tilde P(t;\lambda) \le 2\exp\left(\left(\sqrt{1-\frac{4\lambda}{b^2}}-1\right)\frac{b t}2\right). \]
Since $\sqrt{1-z}\le 1-\frac z2$ for all $z\in(0,1)$, we then obtain
\[ \tilde P(t;\lambda) \le 2\e^{-\frac{\lambda t}b}\text{ for every }t>0,\;\lambda\in(0,\tfrac{b^2}4). \]
\item
For $\lambda\in[\frac{b^2}4,\Lambda]$, we use that $t\mapsto\tilde P(t;\lambda)$ is according to \autoref{thSecOrdUpper} for every $\lambda\in(0,\infty)$ monotonically decreasing and obtain from \autoref{eqSecTildeP} that
\[ \tilde P(t;\lambda) \le \tilde P\left(\frac{\lambda t}\Lambda;\lambda\right) = \e^{-\frac{b\lambda t}{2\Lambda}}\left(1+\frac{b\lambda t}{2\Lambda}\right)\text{ for every }t>0. \]
\end{itemize}
\end{proof}

Next, we give an upper bound for the function $\rho$, $\rho(t;\lambda)\coloneqq\frac1\lambda(1-\tilde\rho(t;\lambda))$, which allows us to verify the property in \autoref{deGenerator}~\ref{enGeneratorBounded} for the corresponding generator $(\ra)_{\alpha>0}$ of the regularisation method.

\begin{lemma}\label{thSecOrdLowerBound}
Let $\tilde\rho$ be given by \autoref{eqSecTildeRho}. Then, there exists a constant $\sigma_1\in(0,1)$ such that
\[ \frac1\lambda(1-\tilde\rho(t;\lambda)) \le \sigma_1{\sqrt{\frac{2t}{b\lambda}}}\text{ for all }t>0,\;\lambda>0. \]
\end{lemma}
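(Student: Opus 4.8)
The plan is to set aside the explicit transcendental expression \autoref{eqSecTildeRho} altogether and to argue instead by an energy (Lyapunov) estimate for the function $\rho(t;\lambda):=\frac1\lambda(1-\tilde\rho(t;\lambda))$. Since $\tilde\rho(\cdot;\lambda)$ is, by \autoref{eqSecTildeRho}, smooth on $[0,\infty)$ for every fixed $\lambda>0$ and solves \autoref{eqSecondOrder}, a direct computation (just insert $\rho=\frac1\lambda(1-\tilde\rho)$) shows that $\rho(\cdot;\lambda)$ is smooth as well and solves the inhomogeneous initial value problem
\[ \partial_{tt}\rho(t;\lambda)+b\,\partial_t\rho(t;\lambda)+\lambda\rho(t;\lambda)=1,\qquad \rho(0;\lambda)=\partial_t\rho(0;\lambda)=0. \]

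Next I would multiply this equation by $\partial_t\rho(t;\lambda)$ and rewrite the result as
\[ \partial_t\!\left(\tfrac12(\partial_t\rho(t;\lambda))^2+\tfrac\lambda2\rho(t;\lambda)^2-\rho(t;\lambda)\right)=-b\,(\partial_t\rho(t;\lambda))^2. \]
Integrating over $[0,t]$ and using the vanishing initial data yields the identity
\[ \tfrac\lambda2\rho(t;\lambda)^2=\rho(t;\lambda)-\tfrac12(\partial_t\rho(t;\lambda))^2-b\int_0^t(\partial_t\rho(s;\lambda))^2\,\d s\le\rho(t;\lambda)-b\int_0^t(\partial_t\rho(s;\lambda))^2\,\d s, \]
and since $\rho(t;\lambda)=\int_0^t\partial_t\rho(s;\lambda)\,\d s$, the right-hand side equals $\int_0^t\big(\partial_t\rho(s;\lambda)-b\,(\partial_t\rho(s;\lambda))^2\big)\,\d s$.

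The remaining ingredient is the elementary inequality $x-bx^2\le\frac1{4b}$ for every $x\in\R$ (complete the square). It bounds the last integral by $\frac t{4b}$, hence $\lambda\rho(t;\lambda)^2\le\frac t{2b}$, and therefore
\[ \rho(t;\lambda)\le|\rho(t;\lambda)|\le\sqrt{\frac t{2b\lambda}}=\tfrac12\sqrt{\frac{2t}{b\lambda}}\text{ for all }t>0,\;\lambda>0, \]
so the claim holds with $\sigma_1:=\tfrac12\in(0,1)$. I do not expect a genuine obstacle here; the only points requiring care are the bookkeeping in the energy identity and the initial decision to discard the explicit formula in favour of the differential equation satisfied by $\rho$. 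A brute-force estimate of $1-\tilde\rho$ directly from \autoref{eqSecTildeRho}, treating the under-, over- and critically damped regimes separately, is also possible but considerably more laborious and does not transparently produce a constant strictly below~$1$.
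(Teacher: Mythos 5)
Your proposal is correct, and I checked the details: the function $\rho=\frac1\lambda(1-\tilde\rho)$ does solve $\partial_{tt}\rho+b\,\partial_t\rho+\lambda\rho=1$ with vanishing initial data, the energy identity $\partial_t\bigl(\tfrac12(\partial_t\rho)^2+\tfrac\lambda2\rho^2-\rho\bigr)=-b(\partial_t\rho)^2$ integrates as you claim, writing $\rho(t)=\int_0^t\partial_t\rho(s)\,\mathrm ds$ and using $x-bx^2\le\frac1{4b}$ gives $\lambda\rho^2(t;\lambda)\le\frac t{2b}$, and $\sqrt{\frac t{2b\lambda}}=\frac12\sqrt{\frac{2t}{b\lambda}}$, so the claim holds with $\sigma_1=\frac12$. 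This is, however, a genuinely different route from the paper's proof. The paper works directly with the explicit formula \autoref{eqSecTildeRho} and treats the overdamped and oscillatory regimes separately: for $\lambda\in(0,\frac{b^2}4)$ it reduces the estimate to the inequality $1-\e^{-z}\le\sigma_0\sqrt z$ from \autoref{lem:aux1}, and for $\lambda>\frac{b^2}4$ it bounds $|\partial_t\tilde\rho|$ by $\frac b{2\e}(1+\beta_+^2(\lambda))$, applies the mean value theorem to get $\frac1\lambda(1-\tilde\rho)\le\frac{2t}{\e b}$, and combines this with the crude bound $\frac2\lambda$ via $\min\{a,c\}\le\sqrt{ac}$, ending up with $\sigma_1=\max\{\sigma_0,\sqrt{2/\e}\}$. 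Your Lyapunov argument avoids the case distinction entirely (the value $\lambda=\frac{b^2}4$ needs no separate continuity argument), is shorter, and even delivers the better constant $\frac12$; what the paper's computation buys instead is explicit pointwise information about $\tilde\rho$ and $\partial_t\tilde\rho$ in each damping regime, in the same explicit-formula style used for the neighbouring Lemmas \ref{thSecOrdDecreasing}--\ref{thSecOrdEnvUpper}, but none of the intermediate bounds of that proof are reused elsewhere, so nothing essential is lost by your approach.
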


\begin{proof}
We consider the two cases for $\lambda\in(0,\frac{b^2}4)$ and $\lambda\in(\frac{b^2}4,\infty)$ separately. The estimate for $\lambda=\frac{b^2}4$ then follows directly from the fact that the function $\lambda\mapsto\tilde\rho(t;\lambda)$ is continuous for every $t\in[0,\infty)$.
\begin{itemize}
\item
For $\lambda\in(0,\frac{b^2}4)$, we use that $\cosh(z)=\e^z-\sinh(z)$ for every $z\in\R$ and obtain with the function $\beta_-$ from \autoref{eqSecBeta} that
\[ \frac1\lambda(1-\tilde\rho(t;\lambda)) = \frac1\lambda\left(1-\e^{-(1-\beta_-(\lambda))\frac{b t}2}-\left(\frac1{\beta_-(\lambda)}-1\right)\e^{-\frac{b t}2}\sinh(\beta_-(\lambda)\tfrac{b t}2)\right). \]
Since $\beta_-(\lambda)\in(0,1)$, we can therefore estimate this with the help of \autoref{lem:aux1} by
\[ \frac1\lambda(1-\tilde\rho(t;\lambda)) \le \frac1\lambda\left(1-\e^{-(1-\beta_-(\lambda))\frac{b t}2}\right) \le \frac{\sigma_0}\lambda\sqrt{1-\beta_-(\lambda)}\sqrt{\frac{b t}2}, \]
where $\sigma_0\in(0,1)$ is the constant found in \autoref{lem:aux1}. Since $\lambda=\frac{b^2}4(1-\beta_-^2(\lambda))$, this means
\[ \frac1\lambda(1-\tilde\rho(t;\lambda)) \le \frac{\sigma_0}{\sqrt{1+\beta_-(\lambda)}}\sqrt{\frac{2t}{b\lambda}} \le \sigma_0\sqrt{\frac{2t}{b\lambda}}. \]

\item
For $\lambda\in(\frac{b^2}4,\infty)$, we remark that
\[ \partial_t\tilde\rho(t;\lambda) = -\frac b2\left(\beta_+(\lambda)+\frac1{\beta_+(\lambda)}\right)\e^{-\frac{b t}2}\sin\left(\beta_+(\lambda)\frac{b t}2\right), \]
where $\beta_+$ is given by \autoref{eqSecBeta}. Since the function $[0,\infty)\to\R$, $z\mapsto (\e^{-z}\sin(a z))^2$, $a>0$, attains its maximal value at its smallest non-negative critical point $z=\frac1a\arctan(a)$, we have that
\[ \left|\partial_t\tilde\rho(t;\lambda)\right| \le \frac b2\left(\beta_+(\lambda)+\frac1{\beta_+(\lambda)}\right)\e^{-\frac{\arctan(\beta_+(\lambda))}{\beta_+(\lambda)}}\left|\sin(\arctan(\beta_+(\lambda)))\right|. \]
Using that $\sin(z)=\frac{\tan(z)}{\sqrt{1+\tan^2(z)}}$ for all $z\in(-\frac\pi2,\frac\pi2)$, this reads
\begin{equation}\label{eqSecOrdLowerBoundStep}
\left|\partial_t\tilde\rho(t;\lambda)\right| \le \frac b2\sqrt{1+\beta_+^2(\lambda)}\,\e^{-\frac{\arctan(\beta_+(\lambda))}{\beta_+(\lambda)}}.
\end{equation}

We further realise that the function $f\colon(0,\infty)\to\R$, $f(z)=\frac1{\sqrt{1+z^2}}\e^{-\frac{\arctan(z)}{z}}$, is monotonically decreasing because of
\begin{align*}
f'(z) &= -\frac1{\sqrt{1+z^2}}\e^{-\frac{\arctan(z)}{z}}\left(\frac z{1+z^2}+\frac1{z(1+z^2)}-\frac{\arctan(z)}{z^2}\right) \\
&= -\frac1{z^2\sqrt{1+z^2}}\e^{-\frac{\arctan(z)}{z}}(z-\arctan(z)) \le 0.
\end{align*}
Thus, $f(z)\le\lim_{z\to0}f(z)=\e^{-1}$ and \autoref{eqSecOrdLowerBoundStep} therefore implies that
\[ \left|\partial_t\tilde\rho(t;\lambda)\right| \le \frac b{2\e}(1+\beta_+^2(\lambda)). \]
With $\frac4{b^2}\lambda=(1+\beta_+^2(\lambda))$, the mean value theorem therefore gives us
\[ \frac1\lambda(1-\tilde\rho(t;\lambda)) = \frac1\lambda(\tilde\rho(0;\lambda)-\tilde\rho(t;\lambda)) \le \frac{2t}{\e b}\text{ for all }t>0. \]
Since we know from \autoref{thSecOrdEnvelope} and \autoref{thSecOrdUpper} that we can estimate $\tilde\rho$ with the function $\tilde P$ from \autoref{eqSecTildeP} by
\begin{equation}\label{eqSecOrdUpperBound}
|\tilde\rho(t;\lambda)|\le \tilde P(t;\lambda)\le \tilde P(0;\lambda)=1,
\end{equation}
we find by using the estimate $\min\{a,b\}\le\min\{\sqrt a,\sqrt b\}\max\{\sqrt a,\sqrt b\}=\sqrt{ab}$ for all $a,b>0$ that 
\[ \frac1\lambda(1-\tilde\rho(t;\lambda))\le\min\left\{\frac2\lambda,\frac{2t}{\e b}\right\} \le\sqrt{\frac2\e}\sqrt{\frac{2t}{b\lambda}} \text{ for all }t>0. \]
\end{itemize}
\end{proof}

Finally, we can put together all the estimates to obtain a regularisation method corresponding to the solution $\xi$ of the heavy ball equation, \autoref{eqODESecond}.

\begin{proposition}\label{thSecOrdReg}
Let $\tilde{\rho}$ be the solution of \autoref{eqSecondOrder}. 
Then, the functions $(\ra)_{\alpha>0}$,
\begin{equation}\label{eqSecOrdGenerator}
\ra(\lambda) \coloneqq \frac1\lambda(1-\tilde{\rho}(\tfrac{b}{2\alpha};\lambda)),
\end{equation}
define a regularisation method in the sense of \autoref{deGenerator}.
\end{proposition}
\begin{proof}
We verify the four conditions in \autoref{deGenerator}.
\begin{enumerate}
\item
We have already seen in \autoref{eqSecOrdUpperBound} that $|\tilde{\rho}(t;\lambda)|\le1$ and thus $\ra(\lambda)\le\frac2\lambda$ for every $\lambda>0$.

Moreover, \autoref{thSecOrdLowerBound} implies that there exists a parameter $\sigma_1\in(0,1)$ such that
\[ \ra(\lambda) = \frac1\lambda(1-\tilde{\rho}(\tfrac{b}{2\alpha};\lambda)) \le \frac{\sigma_1}{\sqrt{\alpha\lambda}}, \]
which is \autoref{eqGeneratorBounded}.
\item
The corresponding error function
\[ \tra\colon(0,\infty)\to[-1,1],\;\tra(\lambda)=\tilde\rho(\tfrac{b}{2\alpha};\lambda), \]
is according to \autoref{thSecOrdDecreasing} non-negative and monotonically decreasing on $(0,\frac{b^2}4+\frac{\pi^2\alpha^2}{b^2})$. Using that $a^2+b^2\ge2ab$ for all $a,b\in\R$, we find that
\[ \frac{b^2}4+\frac{\pi^2\alpha^2}{b^2} \ge 2\sqrt{\frac{\pi^2\alpha^2}{b^2}\,\frac{b^2}4} = \pi\alpha > \alpha, \]
which implies that $\tra$ is for every $\alpha>0$ non-negative and monotonically decreasing on $(0,\alpha)$.
\item 
Choosing
\begin{equation}\label{eqSecOrdGenEnv}
\Tra(\lambda) \coloneqq \tilde P(\tfrac{b}{2\alpha};\lambda)
\end{equation}
with the function $\tilde P$ from \autoref{eqSecTildeP}, we know from \autoref{thSecOrdEnvelope} that $\Tra(\lambda)\ge\abs{\tra(\lambda)}$ holds for all $\lambda>0$ and $\alpha>0$. Moreover, \autoref{thSecOrdUpper} tells us that $\Tra$ is for every $\alpha>0$ monotonically decreasing and that $\alpha\mapsto\Tra(\alpha;\lambda)$ is for every $\lambda>0$ monotonically increasing.

\item 
To estimate the values $\Tra(\alpha)$ for $\alpha$ in a neighbourhood of zero, we calculate the limit
\[ \lim_{\alpha\to0}\Tra(\alpha) = \lim_{\alpha\to0}\tilde P(\tfrac{b}{2\alpha};\alpha)) = \lim_{\alpha\to0}\e^{-\frac{b^2}{4\alpha}}\left(\cosh\left(\beta_-(\alpha)\frac{b^2}{4\alpha}\right)+\frac1{\beta_-(\alpha)}\sinh\left(\beta_-(\alpha)\frac{b^2}{4\alpha}\right)\right), \]
where $\beta_-$ is given by \autoref{eqSecBeta}. Setting $\tilde\alpha=\frac{4\alpha}{b^2}$ and using that then $\beta_-(\alpha)=\sqrt{1-\frac{4\alpha}{b^2}}=\sqrt{1-\tilde\alpha}$, we get that
\begin{align*}
\lim_{\alpha\to0}\Tra(\alpha) &= \lim_{\tilde\alpha\to0}\e^{-\frac1{\tilde\alpha}}\left(\cosh\left(\frac{\sqrt{1-\tilde\alpha}}{\tilde\alpha}\right)+\frac1{\sqrt{1-\tilde\alpha}}\sinh\left(\frac{\sqrt{1-\tilde\alpha}}{\tilde\alpha}\right)\right) \\
&= \lim_{\tilde\alpha\to0}\frac12\left(1+\frac1{\sqrt{1-\tilde\alpha}}\right)\e^{\frac{\sqrt{1-\tilde\alpha}-1}{\tilde\alpha}} = \e^{-\frac12} < 1.
\end{align*}
Thus, there exists for an arbitrarily chosen $\tilde\sigma_0\in(\e^{-\frac12},1)$ a parameter $\bar\alpha_0>0$ such that $\Tra(\alpha)\le\tilde\sigma_0$ for every $\alpha\in(0,\bar\alpha_0)$.

Using further that $t\mapsto\tilde P(t;\lambda)$ is strictly decreasing, see \autoref{thSecOrdUpper}, we have for every $\alpha>0$ that
\[ \Tra(\alpha)=\tilde P(\tfrac b{2\alpha};\alpha) < \tilde P(0;\alpha)=1. \]
Thus, since $\alpha\mapsto\Tra(\alpha)$ is by definition of $\tilde P$ in \autoref{eqSecTildeP} continuous on $(0,\infty)$, we have for every $\bar\alpha>0$ that
\[ \sup_{\alpha\in(0,\bar\alpha]}\Tra(\alpha) = \max\bigg\{\sup_{\alpha\in(0,\bar\alpha_0)}\Tra(\alpha),\sup_{\alpha\in[\bar\alpha_0,\bar\alpha]}\Tra(\alpha)\bigg\}\le\max\bigg\{\tilde\sigma_0,\max_{\alpha\in[\bar\alpha_0,\bar\alpha]}\Tra(\alpha)\bigg\} < 1, \]
which shows \autoref{deGenerator}~\ref{enGeneratorLimit}.
\end{enumerate}
\end{proof}

To be able to apply \autoref{thCR} for the regularisation method generated by $(\ra)_{\alpha>0}$ from \autoref{eqSecOrdGenerator} to the common convergence rates $\phiH_\mu$ and $\phiL_\mu$, it remains to show that they are compatible with $(\ra)_{\alpha>0}$.

\begin{lemma}\label{thSecOrdCompatibility}
The functions $\phiH_\mu$ and $\phiL_\mu$ defined in \autoref{exCR} are for all $\mu>0$ compatible with the regularisation method $(r_\alpha)_{\alpha>0}$ defined by \autoref{eqSecOrdGenerator} in the sense of \autoref{deCompatible}.
\end{lemma}
\begin{proof}
We know from \autoref{thTransComp} that we only need to prove the statement for $\phiH_\mu$ for every $\mu>0$.
The function $\Tra$ defined in \autoref{eqSecOrdGenEnv} fulfils according to \autoref{thSecOrdEnvUpper} for arbitrary $\Lambda>0$ that 
\[ \Tra^2(\lambda) = \tilde P^2\left(\frac b{2\alpha};\lambda\right) \le \Psi_\Lambda^2\left(\frac{b\lambda}{2\alpha}\right) \le \Psi_\Lambda^2\left(\frac b2\left(\frac{\phiH_\mu(\lambda)}{\phiH_\mu(\alpha)}\right)^{\frac1\mu}\right)\text{ for every }\alpha>0,\;\lambda\in(0,\Lambda], \]
where $\Psi_\Lambda$ is given by \autoref{eqPsi}. Since $z\mapsto\Psi_\Lambda^2(\frac b2z^{\frac1\mu})$ is for every $\mu>0$ integrable, $\phiH_\mu$ is compatible with $(\ra)_{\alpha>0}$.
\end{proof}

We can therefore apply \autoref{thCR} to the regularisation method induced by \autoref{eqODESecond}, which is the regularisation method generated by the functions $(\ra)_{\alpha>0}$ defined in \autoref{eqSecOrdGenerator}, and the convergence rate functions $\phiH_\mu$ or $\phiL_\mu$ for arbitrary $\mu>0$.
Thus, although the functions $t\mapsto\tilde\rho(t;\lambda)$ and $\lambda\mapsto\tilde\rho(t;\lambda)$ are not monotonic, we obtain optimal convergence rates of the regularisation method under variational source conditions such as in \autoref{eqCRvar}.

If we formulate it with the stronger standard source condition, see \autoref{thSsc}, we can reproduce a result similar to \cite[Theorem 5.1]{ZhaHof20}.

\begin{corollary}\label{thSecOrdConclusion}
Let $y\in\mathcal R(L)$ be given such that the corresponding minimum norm solution $x^\dag\in \mathcal X$, fulfilling $L x^\dag=y$ and $\|x^\dag\|=\inf\{\norm{x}\mid L x=y\}$, satisfies for some $\mu>0$ the source condition
\[ \xdag\in\mathcal R\big((L^*L)^{\frac\mu2}\big). \]

Then, if $\xi$ is the solution of the initial value problem in \autoref{eqODESecond},
\begin{enumerate}
\item
there exists a constant $C_1>0$ such that
\[ \norm{\xi(t;y)-x^\dag}^2 \le C_1t^{-\mu}\text{ for all }t>0; \]
\item
there exists a constant $C_2>0$ such that
\[ \inf_{t>0}\norm{\xi(t;\tilde y)-x^\dag}^2 \le C_2\norm{\tilde y-y}^{\frac{2\mu}{\mu+1}}\text{ for all }\tilde y\in \mathcal Y; \]
and
\item
there exists a constant $C_3>0$ such that
\[ \norm{L\xi(t;y)-y}^2 \le C_3t^{-\mu-1}\text{ for all }t>0. \]
\end{enumerate}
\end{corollary}
\begin{proof}
The proof follows exactly the lines of the proof of \autoref{thShoCR}, where the compatibility of~$\phiH_\mu$ is shown in \autoref{thSecOrdCompatibility} and we have here the slightly different scaling
\[ \xa(\tilde y)=\ra(L^*L)L^*\tilde y=\int_{(0,\norm{L}^2]}\frac{1-\tilde\rho(\tfrac b{2\alpha};\lambda)}\lambda\d\mathbf E_\lambda L^*\tilde y = \xi(\tfrac b{2\alpha};\tilde y) \]
between the regularised solution $\xa$, defined in \autoref{eq:reg} with the regularisation method $(\ra)_{\alpha>0}$ from \autoref{eqSecOrdGenerator}, and the solutions $\xi$ of \autoref{eqODESecond} and $\tilde\rho$ of \autoref{eqSecondOrder}; which however does not cause a change in the order of the convergence rates.
\end{proof}


\section{The Vanishing Viscosity Flow}
\label{se:sflow}
We consider now the dynamical method \autoref{eqODE} for $N=2$ with the variable coefficient $a_1(t)=\frac b t$ for some parameter $b > 0$, that is, \autoref{eqODEVanishingViscosity}.
According to \autoref{thSpectral}, the solution of \autoref{eqODEVanishingViscosity} is defined via the spectral integral in \autoref{eqSolutionODE} of $\rho(t;\lambda) = \frac{1-\tilde{\rho}(t;\lambda)}{\lambda}$, where $\tilde{\rho}$ solves for every $\lambda\in(0,\infty)$ the 
initial value problem
\begin{equation} \label{eq:singular}
\begin{aligned}
\partial_{t t}\tilde\rho(t;\lambda)+\frac b t\partial_t\tilde\rho(t;\lambda)+\lambda\tilde\rho(t;\lambda)&=0 \text{ for all } t \in \ointerval{\infty},\\
\partial_t\tilde\rho(0;\lambda)&=0, \\
\tilde\rho(0;\lambda)&=1.
\end{aligned}
\end{equation}

As already noted in \cite[Section~3.2]{SuBoyCan16}, we obtain a closed form in terms of Bessel functions for the solution of \autoref{eq:singular}.

\begin{lemma}\label{thSingularSolution}
Let $b, \lambda > 0$. Then \autoref{eq:singular} has the unique solution
\begin{equation}\label{eq:singularSolution}
\tilde\rho(t;\lambda) = u(t\sqrt\lambda)\text{ with }u(\tau)=\left(\frac2\tau\right)^{\frac12(b-1)}\Gamma(\tfrac12(b+1))J_{\frac12(b-1)}(\tau),
\end{equation}
where $\Gamma$ is the gamma function and $J_\nu$ denotes the Bessel function of first kind of order $\nu\in\R$. See \autoref{fgVanVis} for a sketch of the graph of the function $u$.
\end{lemma}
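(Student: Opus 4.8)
The plan is to verify directly that the function defined in \autoref{eq:singularSolution} solves \autoref{eq:singular}, and then to invoke a uniqueness result. First I would recall that Bessel's equation of order $\nu$, namely $\tau^2 w''(\tau)+\tau w'(\tau)+(\tau^2-\nu^2)w(\tau)=0$, has the solution $w=J_\nu$, and I would compute how the substitutions $\tilde\rho(t;\lambda)=u(t\sqrt\lambda)$ and $u(\tau)=(2/\tau)^{\frac12(b-1)}\Gamma(\tfrac12(b+1))J_{\frac12(b-1)}(\tau)$ transform the differential operator. Setting $\tau=t\sqrt\lambda$, the chain rule turns $\partial_{tt}+\frac bt\partial_t+\lambda$ acting on $\tilde\rho$ into $\lambda\bigl(u''(\tau)+\frac b\tau u'(\tau)+u(\tau)\bigr)$, so it suffices to check that $u$ solves $u''+\frac b\tau u'+u=0$ on $(0,\infty)$. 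Writing $u(\tau)=c\,\tau^{-\frac12(b-1)}J_{\frac12(b-1)}(\tau)$ with $\nu:=\tfrac12(b-1)$ and $c:=2^\nu\Gamma(\nu+1)$, a short computation (using $(\tau^{-\nu}J_\nu)'=-\tau^{-\nu}J_{\nu+1}$ and the Bessel recurrences, or simply substituting $u=\tau^{-\nu}v$ and expanding) reduces $u''+\frac b\tau u'+u=0$ to precisely Bessel's equation $v''+\frac1\tau v'+(1-\frac{\nu^2}{\tau^2})v=0$ for $v=J_\nu$; this is the routine part I would not spell out in full.

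Next I would check the initial conditions. Using the series $J_\nu(\tau)=\sum_{k\ge0}\frac{(-1)^k}{k!\,\Gamma(k+\nu+1)}(\tau/2)^{2k+\nu}$, one gets $\tau^{-\nu}J_\nu(\tau)=\sum_{k\ge0}\frac{(-1)^k}{k!\,\Gamma(k+\nu+1)}2^{-2k-\nu}\tau^{2k}$, which is an even analytic function of $\tau$ with value $\frac{2^{-\nu}}{\Gamma(\nu+1)}$ at $\tau=0$; hence $u(0)=c\cdot\frac{2^{-\nu}}{\Gamma(\nu+1)}=1$, so $\tilde\rho(0;\lambda)=1$. Because $\tau^{-\nu}J_\nu(\tau)$ is even in $\tau$, its derivative vanishes at $\tau=0$, and since $\tilde\rho(t;\lambda)=c\,\lambda^{-\nu/2}\,t^{-\nu}J_\nu(t\sqrt\lambda)\cdot\lambda^{\nu/2}$—more cleanly, $\tilde\rho(t;\lambda)=g(t^2\lambda)$ for the analytic function $g(s):=c\sum_{k\ge0}\frac{(-1)^k}{k!\,\Gamma(k+\nu+1)}2^{-2k-\nu}s^{k}$—we see $\tilde\rho(\cdot;\lambda)$ is smooth on $[0,\infty)$ with $\partial_t\tilde\rho(0;\lambda)=2t\lambda\,g'(t^2\lambda)|_{t=0}=0$. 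This also confirms the regularity needed to fit the framework (in particular $\tilde\rho$ and its $t$-derivatives extend continuously in $\lambda$ down to $\lambda=0$, where $g(t^2\lambda)\to g(0)=1$).

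For uniqueness, I would argue on the open interval $(0,\infty)$ first: there the coefficient $\frac bt$ is smooth, so by the standard Picard--Lindelöf theory for linear second-order ODEs (e.g.\ \cite[Section II.2.1]{Kre72}) any two solutions agreeing at one interior point $t_0>0$ with equal first derivatives coincide. The only delicate point is the singularity at $t=0$: I would note that any solution satisfying $\partial_t\tilde\rho(0;\lambda)=0$ and $\tilde\rho(0;\lambda)=1$ is forced, by integrating the equation $\partial_t(t^b\partial_t\tilde\rho)=-\lambda t^b\tilde\rho$ from $0$, to satisfy $t^b\partial_t\tilde\rho(t;\lambda)=-\lambda\int_0^t s^b\tilde\rho(s;\lambda)\d s$, hence $\partial_t\tilde\rho(t;\lambda)=-\lambda t^{-b}\int_0^t s^b\tilde\rho(s;\lambda)\d s$, and together with $\tilde\rho(t;\lambda)=1-\lambda\int_0^t r^{-b}\int_0^r s^b\tilde\rho(s;\lambda)\d s\,\d r$ this is a Volterra-type fixed point equation whose kernel is integrable near $0$ for $b>0$ (the inner integral behaves like $r^{b+1}/(b+1)$, so $r^{-b}$ times it is $\sim r$), giving a contraction on $C([0,T])$ for small $T$ and hence at most one solution. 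I expect this handling of the $t=0$ singularity to be the main obstacle; everything else is a direct substitution into Bessel's equation and a power-series check of the initial data.
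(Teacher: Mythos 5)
Your proposal is correct in substance, but it handles uniqueness by a genuinely different route than the paper. The paper performs the same reduction $\tau=t\sqrt\lambda$, $v=\tau^{\frac12(b-1)}\tilde\rho$ to Bessel's equation, but then writes down the \emph{general} solution (a combination of $J_{|\kappa|}$ with $Y_{|\kappa|}$ or $J_{-|\kappa|}$, $\kappa=\tfrac12(b-1)$) and uses the small-argument asymptotics of the second solution to show that the initial conditions at $t=0$ force its coefficient to vanish; this derives the formula and proves uniqueness at once, at the price of a case distinction ($\kappa\ge0$ versus $\kappa\in(-\tfrac12,0)$, i.e.\ $b\ge1$ versus $0<b<1$, where it is the condition $\partial_t\tilde\rho(0;\lambda)=0$ rather than boundedness that eliminates the second solution). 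You instead verify the stated formula directly and obtain uniqueness from the Volterra reformulation $\tilde\rho(t)=1-\lambda\int_0^t r^{-b}\int_0^r s^b\tilde\rho(s)\d s\d r$ together with a contraction on $C([0,T])$ and Picard--Lindel\"of away from $t=0$; this is valid, somewhat more self-contained (no asymptotics of $Y_\nu$ or $J_{-\nu}$ needed), and uniform in $b>0$, whereas the paper's classification additionally exhibits the full solution structure and explains where the formula comes from.

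The one step you should not wave through is the passage from the differential equation to the integral equation: integrating $(t^b\partial_t\tilde\rho)'=-\lambda t^b\tilde\rho$ ``from $0$'' presupposes $\lim_{t\to0}t^b\partial_t\tilde\rho(t;\lambda)=0$, and the initial condition $\partial_t\tilde\rho(0;\lambda)=0$ alone does not give this immediately, since a priori $\partial_t\tilde\rho(t;\lambda)$ could be badly behaved as $t\to0$. It does follow with a short argument: since $\tilde\rho(\cdot;\lambda)$ is continuous at $0$, the right-hand side $-\lambda t^b\tilde\rho$ is integrable near $0$, so $t^b\partial_t\tilde\rho(t;\lambda)$ has a limit $c_0$ as $t\to0$; if $c_0\ne0$, then $\partial_t\tilde\rho(t;\lambda)\sim c_0t^{-b}$, which for $b\ge1$ contradicts the existence of $\tilde\rho(0;\lambda)$ and for $0<b<1$ contradicts differentiability at $0$ with vanishing derivative. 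With this inserted, your uniqueness argument is complete, and the remaining parts (the chain-rule reduction, the verification via the power series of $J_\nu$, and the evenness argument for $\partial_t\tilde\rho(0;\lambda)=0$) are sound.
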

\begin{proof}
We rescale \autoref{eq:singular} by switching to the function
\begin{equation}\label{eq:singularRescaling}
v\colon[0,\infty)\times(0,\|L\|^2]\to\R,\; v(\tau;\lambda) = \tau^\kappa \tilde\rho(\sigma_\lambda\tau;\lambda)
\end{equation}
with some parameters $\sigma_\lambda\in(0,\infty)$ and $\kappa\in\R$. The function $v$ thus has the derivatives
\begin{equation}\label{eq:singularRescalingFirst}
\partial_\tau v(\tau;\lambda) = \sigma_\lambda\tau^\kappa\partial_t\tilde\rho(\sigma_\lambda\tau;\lambda)+\kappa\tau^{\kappa-1}\tilde\rho(\sigma_\lambda\tau;\lambda)
\end{equation}
and
\[ \partial_{\tau\tau}v(\tau;\lambda) = \sigma_\lambda^2\tau^\kappa\partial_{t t}\tilde\rho(\sigma_\lambda\tau;\lambda)+2\sigma_\lambda\kappa\tau^{\kappa-1}\partial_t\tilde\rho(\sigma_\lambda\tau;\lambda)+\kappa(\kappa-1)\tau^{\kappa-2}\tilde\rho(\sigma_\lambda\tau;\lambda). \]
We use \autoref{eq:singular} to replace the second derivative of $\tilde\rho$ and obtain
\[ \partial_{\tau\tau}v(\tau;\lambda) = \sigma_\lambda(2\kappa-b)\tau^{\kappa-1}\partial_t\tilde\rho(\sigma_\lambda\tau;\lambda)+(\kappa(\kappa-1)-\lambda\sigma_\lambda^2\tau^2)\tau^{\kappa-2}\tilde\rho(\sigma_\lambda\tau;\lambda), \]
which, after writing $\partial_t\tilde\rho$ and $\tilde\rho$ via \autoref{eq:singularRescalingFirst} and \autoref{eq:singularRescaling} in terms of the function $v$, becomes the differential equation
\[ \tau^2\partial_{\tau\tau}v(\tau;\lambda)+(b-2\kappa)\tau\partial_\tau v(\tau;\lambda)+(\lambda\sigma_\lambda^2\tau^2-\kappa(\kappa-1)-\kappa(b-2\kappa))v(\tau;\lambda) = 0 \]
for the function $v$. Choosing now $\kappa=\frac12(b-1)$, so that $b-2\kappa=1$, and $\sigma_\lambda=\frac1{\sqrt\lambda}$, we end up with Bessel's differential equation
\[ \tau^2\partial_{\tau\tau}v(\tau;\lambda)+\tau\partial_\tau v(\tau;\lambda)+(\tau^2-\kappa^2)v(\tau;\lambda) = 0, \]
for which every solution can be written as
\[ v(\tau;\lambda) = 
	\begin{cases}
	C_{1,\kappa}J_{|\kappa|}(\tau)+C_{2,\kappa}Y_{|\kappa|}(\tau),
		&\kappa\in\Z,\\
	C_{1,\kappa}J_{|\kappa|}(\tau)+C_{2,\kappa}J_{-|\kappa|}(\tau),
		&\kappa\in\R\setminus\Z,
	\end{cases} \]
for some constants $C_{1,\kappa},C_{2,\kappa}\in\R$, where $J_\nu$ and $Y_\nu$ denote the Bessel functions of first and second kind of order $\nu\in\R$, respectively; see, for example, \cite[Chapter 9.1]{AbrSte72}.

We can therefore write the solution $\tilde\rho$ as
\begin{equation}\label{eq:singularSolutionGeneral}
\tilde\rho(t;\lambda) =
	\begin{cases}
	C_{1,\kappa}(t\sqrt\lambda)^{-\kappa}J_{|\kappa|}(t\sqrt\lambda)+C_{2,\kappa}(t\sqrt\lambda)^{-\kappa}Y_{|\kappa|}(t\sqrt\lambda),
		&\kappa\in\Z,\\
	C_{1,\kappa}(t\sqrt\lambda)^{-\kappa}J_{|\kappa|}(t\sqrt\lambda)+C_{2,\kappa}(t\sqrt\lambda)^{-\kappa}J_{-|\kappa|}(t\sqrt\lambda),
		&\kappa\in\R\setminus\Z.
	\end{cases}
\end{equation}
To determine the constants $C_{1,\kappa}$ and $C_{2,\kappa}$ from the initial conditions, we remark that the Bessel functions have for all $\kappa\in\R\setminus(-\N)$ and all $n\in\N$ asymptotically for $\tau\to0$ the behaviour
\begin{equation}\label{eq:asymptoticsBessel}
\tau^{-\kappa}J_{\kappa}(\tau) = \frac1{2^{\kappa}\Gamma(\kappa+1)}+\mathcal O(\tau^2),\; \lim_{\tau\to0}\tau^n Y_n(\tau) = -\frac{2^n(n-1)!}\pi,\text{ and }\lim_{\tau\to0}\frac{Y_0(\tau)}{\log(\tau)} = \frac2\pi,
\end{equation}
see, for example, \cite[Formulae~9.1.10 and~9.1.11]{AbrSte72}.

We consider the cases $\kappa\ge0$ and $\kappa\in(-\frac12,0)$ separately.
\begin{itemize}
\item
In particular, the relations in \autoref{eq:asymptoticsBessel} imply that, for the last terms in \autoref{eq:singularSolutionGeneral}, we have with $\tau=t\sqrt\lambda$ asymptotically for $\tau\to0$
\begin{itemize}
\item
for $\kappa=0$:
\[ C_{2,0}Y_0(\tau) = \frac{2}{\pi}C_{2,0}\mathcal O(\log(\tau)) \]
because of the third relation in \autoref{eq:asymptoticsBessel};
\item
for $\kappa\in\N$:
\[ C_{2,\kappa}\tau^{-\kappa} Y_\kappa(\tau)= C_{2,\kappa}\tau^{-2\kappa} (\tau^\kappa Y_\kappa(\tau)) = C_{2,\kappa}\left(-\frac{2^\kappa(\kappa-1)!}{\pi}+o(1)\right)\tau^{-2\kappa} \]
because of the second relation in \autoref{eq:asymptoticsBessel}; and
\item
for $\kappa\in(0,\infty)\setminus\N$:
\[ C_{2,\kappa}\tau^{-\kappa}J_{-\kappa}(\tau)=C_{2,\kappa}\tau^{-2\kappa} (\tau^\kappa J_{-\kappa}(\tau))=C_{2,\kappa}\left(\frac{2^\kappa}{\Gamma(1-\kappa)}+\mathcal O(\tau^2)\right)\tau^{-2\kappa} \]
because of the first relation in \autoref{eq:asymptoticsBessel}.
\end{itemize}
Thus, the last terms in \autoref{eq:singularSolutionGeneral} diverge for every $\kappa\ge0$ as $t\to0$.

Since the first terms in \autoref{eq:singularSolutionGeneral} converge according to the first relation in \autoref{eq:asymptoticsBessel} for $t\to0$, the initial condition $\tilde\rho(0;\lambda)=1$ can only be fulfilled if the coefficients $C_{2,\kappa}$, $\kappa\ge0$, in front of the singular terms are all zero so that we have
\[ \tilde\rho(t;\lambda) = C_{1,\kappa}(t\sqrt\lambda)^{-\kappa}J_\kappa(t\sqrt\lambda)\text{ for all }\kappa\ge0. \]
Furthermore, the initial condition $\tilde\rho(0;\lambda)=1$ implies according to the first relation in \autoref{eq:asymptoticsBessel} that
\[ C_{1,\kappa} = 2^\kappa\Gamma(\kappa+1)\text{ for all }\kappa\ge0, \]
which gives the representation of \autoref{eq:singularSolution} for the solution $\tilde\rho$.

It remains to check that also the initial condition $\partial_t\tilde\rho(0;\lambda)=0$ is for all $\kappa\ge0$ fulfilled, which again follows directly from the first relation in \autoref{eq:asymptoticsBessel}:
\[ \partial_t\tilde\rho(0;\lambda) = \lim_{t\to0}\frac1t\left(2^\kappa\Gamma(\kappa+1)(t\sqrt\lambda)^{-\kappa}J_\kappa(t\sqrt\lambda)-1\right) = 0\text{ for all }\kappa\ge0. \]
\item
For $\kappa\in(-\frac12,0)$, we have that the first term in $\tilde\rho(t;\lambda)$ converges for $t\to0$ to $0$ because of
\[ C_{1,\kappa}(t\sqrt\lambda)^{|\kappa|}J_{|\kappa|}(t\sqrt\lambda) = C_{1,\kappa}(t\sqrt\lambda)^{2|\kappa|}\left(\frac1{2^{|\kappa|}\Gamma(|\kappa|+1)}+\mathcal O(t^2)\right), \]
which follows from the first relation of \autoref{eq:asymptoticsBessel}. Therefore, the initial condition $\tilde\rho(0;\lambda)=1$ requires that
\[ 1 = \tilde\rho(0;\lambda) = C_{2,\kappa}\lim_{t\to0}(t\sqrt\lambda)^{|\kappa|}J_{-|\kappa|}(t\sqrt\lambda)\text{ for all }\kappa\in(-\tfrac12,0), \]
from which we get with the first property in \autoref{eq:asymptoticsBessel} that
\[ C_{2,\kappa} = 2^\kappa\Gamma(\kappa+1). \]
To determine the coefficient $C_{1,\kappa}$, we remark that the first identity in \autoref{eq:asymptoticsBessel} then gives us for $t\to0$ the asymptotic behaviour
\[ \tilde\rho(t;\lambda) = 1+C_{1,\kappa}\frac{(t\sqrt\lambda)^{2|\kappa|}}{2^{|\kappa|}\Gamma(|\kappa|+1)}+\mathcal O(t^2). \]
Therefore, we have for the first derivative at $t=0$ the expression
\[ \partial_t\tilde\rho(0;\lambda) = \lim_{t\to0}C_{1,\kappa}\frac{2|\kappa|\lambda^{|\kappa|}}{2^{|\kappa|}\Gamma(|\kappa|+1)}\,\frac1{t^{1-2|\kappa|}}. \]
To satisfy the initial condition $\partial_t\tilde\rho(0;\lambda)=0$, we thus have to choose $C_{1,\kappa}=0$ for $\kappa\in(-\frac12,0)$, which leaves us again with \autoref{eq:singularSolution}.
\end{itemize}
\end{proof}

\begin{figure}
\hfil
\includegraphics{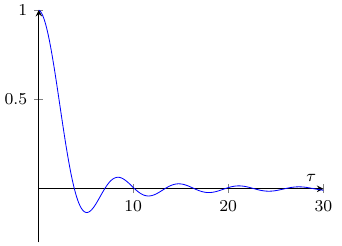}
\hfil
\caption{Graph of the function $u$, defined in \autoref{eq:singularSolution}, which gives the solution $\tilde\rho$ of \autoref{eq:singular} via $\tilde\rho(t;\lambda)=u(t\sqrt\lambda)$. As in the heavy ball method, the function~$\tilde{\rho}$ is not monotonic (in either component) so that we used the envelope of the regularisation method to obtain the optimal convergence rates.}\label{fgVanVis}
\end{figure}

\begin{corollary}\label{thSingUnique}
The unique solution $\xi\colon[0,\infty)\times \mathcal Y\to\R$ of the vanishing viscosity flow, \autoref{eqODEVanishingViscosity}, which is twice continuously differentiable with respect to $t$ is given by
\[ \xi(t;\tilde y) = \int_{(0,\norm{L}^2]}\frac{1-u(t\sqrt\lambda)}\lambda\d\mathbf E_\lambda L^*\tilde y \]
where the function $u$ is defined by \autoref{eq:singularSolution}.
\end{corollary}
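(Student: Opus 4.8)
The plan is to read off the claim from \autoref{thSpectral}, applied with $N=2$ and $a_1(t)=\frac bt$, once the scalar solution furnished by \autoref{thSingularSolution} is shown to satisfy the regularity hypotheses required there. First I would note that \autoref{thSingularSolution} already gives, for every fixed $\lambda>0$, the unique solution $\tilde\rho(t;\lambda)=u(t\sqrt\lambda)$ of \autoref{eq:singular}; and for $\lambda=0$ the equation degenerates to $(t^b\partial_t\tilde\rho)'=0$, whose only solution satisfying $\partial_t\tilde\rho(0;\cdot)=0$ and $\tilde\rho(0;\cdot)=1$ is $\tilde\rho(t;0)\equiv1$, which matches $u(0)=1$. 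So the formula $\tilde\rho(t;\lambda)=u(t\sqrt\lambda)$ is the unique scalar solution for all $\lambda\ge0$.

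The one genuinely non-routine point is the joint smoothness of $(t,\lambda)\mapsto u(t\sqrt\lambda)$ on $[0,\infty)\times[0,\infty)$, because the argument $t\sqrt\lambda$ is only Hölder (not $C^1$) in $\lambda$ near $\lambda=0$. Here I would exploit that, with $\kappa=\frac12(b-1)>-\frac12$, the function $\tau^{-\kappa}J_\kappa(\tau)$ is an \emph{entire even} function of $\tau$, namely
\[ \tau^{-\kappa}J_\kappa(\tau) = \sum_{n=0}^\infty\frac{(-1)^n}{2^{2n+\kappa}\,n!\,\Gamma(n+\kappa+1)}\,\tau^{2n}, \]
so that $u(\tau)=2^\kappa\Gamma(\kappa+1)\,\tau^{-\kappa}J_\kappa(\tau)$ is a power series in $\tau^2$, and consequently $u(t\sqrt\lambda)$ is a convergent power series in the single variable $t^2\lambda$. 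Hence $\tilde\rho(t;\lambda)=u(t\sqrt\lambda)$ is real-analytic, in particular $C^\infty$, on all of $[0,\infty)\times[0,\infty)$; this covers the assumptions of \autoref{thSpectral} that $\tilde\rho$ be twice partially differentiable in $t$ and that $\partial_t^k\tilde\rho\in C^1([0,\infty)\times[0,\infty);\R)$ for $k\in\{0,1,2\}$, and it also settles the uniqueness of $\tilde\rho$ required there.

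With these hypotheses verified, \autoref{thSpectral} (whose proof already handles the singular coefficient $\frac bt$ at the origin) applies directly: setting $\rho(t;\lambda)=\frac{1-\tilde\rho(t;\lambda)}\lambda=\frac{1-u(t\sqrt\lambda)}\lambda$ as in \autoref{eqRho}, the function
\[ \xi(t;\tilde y) = \int_{(0,\norm{L}^2]}\frac{1-u(t\sqrt\lambda)}\lambda\d\mathbf E_\lambda L^*\tilde y \]
is the unique twice strongly continuously differentiable solution of \autoref{eqODEVanishingViscosity}, which is precisely the asserted statement. The main obstacle is thus the boundary regularity discussed in the second paragraph; everything else is supplied essentially verbatim by \autoref{thSingularSolution} and \autoref{thSpectral}.
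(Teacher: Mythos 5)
Your proposal is correct and follows essentially the same route as the paper: the paper likewise observes that $u(\tau)=v(\tau^2)$ with $v$ an entire power series (Abramowitz--Stegun 9.1.10), so that $\tilde\rho(t;\lambda)=v(\lambda t^2)$ is jointly smooth on $[0,\infty)\times[0,\infty)$, and then invokes \autoref{thSpectral}. Your additional explicit treatment of the $\lambda=0$ case is a small refinement of a point the paper leaves implicit, but it does not change the argument.
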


\begin{proof}
We have already seen in \autoref{thSingularSolution} that \autoref{eq:singular} has the unique solution $\tilde\rho$ 
given by $\tilde\rho(t;\lambda)=u(t\sqrt\lambda)$. To apply \autoref{thSpectral}, it is thus enough to show that $\tilde\rho$ is smooth.

Since the function $u$ has the representation
\[ u(\tau) = v(\tau^2)\text{ with } v(\tilde\tau) = \Gamma(\tfrac12(b+1))\sum_{k=0}^\infty\frac{(-\frac14\tilde\tau)^k}{k!\Gamma(\frac12(b+1)+k)}, \]
see, for example, \cite[Formula~9.1.10]{AbrSte72}, the solution $\tilde\rho\colon[0,\infty)\times[0,\infty)\to\R$ given by \autoref{eq:singularSolution} is of the form $\tilde\rho(t;\lambda)=u(t\sqrt\lambda)=v(\lambda t^2)$ and is therefore seen to be smooth. Therefore, \autoref{thSpectral} yields the claim.
\end{proof}

Again, we want to determine a corresponding regularisation method. We start by showing that the function $\lambda\mapsto\tilde\rho(t;\lambda)$, which corresponds to the error function $\tra$ of the regularisation method, is non-negative and monotonically decreasing for sufficiently small values $\lambda$ as required for $\tra$ in \autoref{deGenerator}~\ref{enGeneratorError}.

\begin{lemma}\label{thStrictlyDecreasing}
Let $j_{\kappa,1}\in(0,\infty)$ denote the first positive zero of the Bessel function $J_\kappa$. Then, the solution~$\tilde\rho$ given in \autoref{eq:singularSolution} fulfils
\begin{itemize}
\item
for every $\lambda>0$ that the function $t\mapsto\tilde\rho(t;\lambda)$ is strictly decreasing on the interval $(0,\frac1{\sqrt\lambda}j_{\frac12(b-1),1})$ and
\item
for every $t>0$ that the function $\lambda\mapsto\tilde\rho(t;\lambda)$ is strictly decreasing on the interval $(0,\frac1{t^2}j_{\frac12(b-1),1}^2)$.
\end{itemize}
\end{lemma}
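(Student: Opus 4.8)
The plan is to reduce both assertions to a single monotonicity statement about the scalar function $u$ from \autoref{eq:singularSolution}: writing $\kappa=\tfrac12(b-1)$ and recalling $u(\tau)=2^\kappa\Gamma(\kappa+1)\,\tau^{-\kappa}J_\kappa(\tau)$, I claim that $u$ is strictly decreasing on the interval $(0,j_{\kappa,1})$. To see this I would differentiate using the standard Bessel identity $\tfrac{\d}{\d\tau}\bigl(\tau^{-\nu}J_\nu(\tau)\bigr)=-\tau^{-\nu}J_{\nu+1}(\tau)$, see, e.g., \cite[Formula~9.1.30]{AbrSte72}, which yields
\[ u'(\tau) = -2^\kappa\Gamma(\kappa+1)\,\tau^{-\kappa}J_{\kappa+1}(\tau), \]
so that on $(0,\infty)$ the sign of $u'$ is opposite to that of $J_{\kappa+1}$.

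Next I would argue that $J_{\kappa+1}$ is strictly positive on $(0,j_{\kappa,1})$. Since $b>0$ we have $\kappa+1=\tfrac12(b+1)>0$, hence $J_{\kappa+1}(\tau)>0$ for $\tau$ in a right neighbourhood of $0$ (from the leading term $(\tau/2)^{\kappa+1}/\Gamma(\kappa+2)$ of its power series), and the smallest positive zero of $J_{\kappa+1}$ is $j_{\kappa+1,1}$. The classical interlacing of the positive zeros of $J_\kappa$ and $J_{\kappa+1}$ (equivalently, strict monotonicity of the first positive zero in the order, valid since $\kappa>-\tfrac12>-1$) gives $j_{\kappa,1}<j_{\kappa+1,1}$, so $J_{\kappa+1}>0$ on all of $(0,j_{\kappa,1})$. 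Combined with the formula for $u'$, this shows $u'(\tau)<0$ for $\tau\in(0,j_{\kappa,1})$, i.e. $u$ is strictly decreasing there.

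Finally I would transfer this to $\tilde\rho$ via the chain rule. Since $\tilde\rho(t;\lambda)=u(t\sqrt\lambda)$ and $\tau=t\sqrt\lambda$ is, with the other variable fixed, strictly increasing in $t$ on $(0,\infty)$ and in $\lambda$ on $(0,\infty)$, we get $\partial_t\tilde\rho(t;\lambda)=\sqrt\lambda\,u'(t\sqrt\lambda)$ and $\partial_\lambda\tilde\rho(t;\lambda)=\tfrac{t}{2\sqrt\lambda}\,u'(t\sqrt\lambda)$; both are negative precisely when $t\sqrt\lambda<j_{\kappa,1}$, which is exactly the condition $t<\tfrac1{\sqrt\lambda}j_{\kappa,1}$ in the first case and $\lambda<\tfrac1{t^2}j_{\kappa,1}^2$ in the second. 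Thus $t\mapsto\tilde\rho(t;\lambda)$ and $\lambda\mapsto\tilde\rho(t;\lambda)$ are strictly decreasing on the stated intervals. The only non-routine ingredient is the inequality $j_{\kappa,1}<j_{\kappa+1,1}$, which I would quote from the literature (e.g. \cite{AbrSte72} or Watson's treatise) rather than reprove; everything else is a direct computation, and I expect no further obstacle.
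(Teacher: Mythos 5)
Your proof is correct, and it shares the paper's overall reduction---pass to the scalar function $u$ with $\tilde\rho(t;\lambda)=u(t\sqrt\lambda)$, prove $u'<0$ on $(0,j_{\kappa,1})$ with $\kappa=\tfrac12(b-1)$, and transfer strict monotonicity in $t$ and in $\lambda$ by the chain rule---but you establish the key inequality $u'<0$ by a different device. The paper expands $J_\kappa$ as the Euler product $J_\kappa(\tau)=\frac{\tau^\kappa}{2^\kappa\Gamma(\kappa+1)}\prod_{\ell\ge1}\bigl(1-\tau^2/j_{\kappa,\ell}^2\bigr)$, so that $u(\tau)=\prod_{\ell\ge1}\bigl(1-\tau^2/j_{\kappa,\ell}^2\bigr)$ is a product of positive factors on $(0,j_{\kappa,1})$ and termwise differentiation exhibits $u'$ as a manifestly negative sum there; this is self-contained apart from the product formula \cite[Formula~9.5.10]{AbrSte72} and needs no information about the zeros of $J_{\kappa+1}$. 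You instead use $\frac{\mathrm d}{\mathrm d\tau}\bigl(\tau^{-\kappa}J_\kappa(\tau)\bigr)=-\tau^{-\kappa}J_{\kappa+1}(\tau)$ to get the closed form $u'(\tau)=-2^\kappa\Gamma(\kappa+1)\tau^{-\kappa}J_{\kappa+1}(\tau)$, and then import the interlacing inequality $j_{\kappa,1}<j_{\kappa+1,1}$ (valid since $\kappa>-\tfrac12>-1$) to conclude $J_{\kappa+1}>0$ on $(0,j_{\kappa,1})$. Both routes are legitimate; yours yields a cleaner explicit derivative and avoids justifying termwise differentiation of an infinite product, at the price of quoting the interlacing theorem---which, if you prefer, you can avoid by integrating the companion identity $\frac{\mathrm d}{\mathrm d\tau}\bigl(\tau^{\kappa+1}J_{\kappa+1}(\tau)\bigr)=\tau^{\kappa+1}J_\kappa(\tau)$, so that $\tau^{\kappa+1}J_{\kappa+1}(\tau)=\int_0^\tau s^{\kappa+1}J_\kappa(s)\d s>0$ for $\tau\in(0,j_{\kappa,1}]$ because $J_\kappa>0$ there. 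One small wording point: ``both are negative precisely when $t\sqrt\lambda<j_{\kappa,1}$'' overstates what you proved (and what is needed); negativity of the derivatives on the stated intervals is all the lemma requires, and that you have.
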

\begin{proof}
Since we can write $\tilde\rho$ in the form $\tilde\rho(t;\lambda)=u(t\sqrt\lambda)$, see \autoref{eq:singularSolution}, it is enough to show that
\[ u'(\tau)<0\text{ for }\tau\in(0,j_{\frac12(b-1),1}). \]
This property of $u$ follows directly from the representation of the Bessel functions $J_\kappa$, $\kappa\in(-\frac12,\infty)$, 
as an infinite product, see, for example, \cite[Formula~9.5.10]{AbrSte72}:
\[ J_\kappa(\tau) = \frac{\tau^\kappa}{2^\kappa\Gamma(\kappa+1)}\prod_{\ell=1}^\infty\left(1-\frac{\tau^2}{j_{\kappa,\ell}^2}\right), \]
where $j_{\kappa,\ell}$ denotes the $\ell$th positive zero (sorted in increasing order) of $J_\kappa$; since this gives
\[ u(\tau) = \prod_{\ell=1}^\infty\left(1-\frac{\tau^2}{j_{\frac12(b-1),\ell}^2}\right), \]
which is for $\tau\in(0,j_{\frac12(b-1),1})$ a product of only positive factors. Therefore, we have
\[ u'(\tau) = -2\tau\sum_{\ell=1}^\infty\left[j_{\frac12(b-1),\ell}^{-2}
\prod_{\tilde\ell\in\N\setminus\{\ell\}}\left(1-\frac{\tau^2}{j_{\frac12(b-1),\tilde\ell}^2}\right)\right] < 0\text{ for all }
\tau\in(0,j_{\frac12(b-1),1}). \]
\end{proof}

Furthermore, we can construct an upper bound $\tilde P(t;\lambda)=U(t\sqrt\lambda)$ of $|\tilde\rho(t;\lambda)|$, which corresponds
to the envelope value $\Tra(\lambda)$, such that $\tilde P(t;\cdot)$ is monotonically decreasing. This will give us the condition of 
\autoref{deGenerator}~\ref{enGeneratorErrorReg} for the function $\Tra$. The additionally derived explicit upper bound for $U$ helps us to 
show the compatibility of the convergence rate functions $\phiH_\mu$ and $\phiL_\mu$.

\begin{lemma}\label{thSingularUpperBound}
Let $\tilde\rho$ be the solution of \autoref{eq:singular} given by \autoref{eq:singularSolution}. 
Then, there exist a constant $C>0$ and a continuous, monotonically decreasing function $U\colon[0,\infty)\to[0,1]$ so that
\begin{itemize}
\item $|\tilde\rho(t;\lambda)| \le U(t\sqrt\lambda)$ for every $t\ge0$, $\lambda>0$,
\item $U(\tau)<1$ for all $\tau>0$, and 
\item $U(\tau) \le C\tau^{-\frac b2}$ for all $\tau>0$.
\end{itemize}
\end{lemma}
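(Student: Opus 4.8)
The plan is to use that, by \autoref{eq:singularSolution}, $\tilde\rho(t;\lambda)=u(t\sqrt\lambda)$, so it suffices to bound the one‑variable function $u$ by a continuous, monotonically decreasing function $U:[0,\infty)\to[0,1]$ with $U(\tau)<1$ for $\tau>0$ and $U(\tau)\le C\tau^{-b/2}$. Rescaling \autoref{eq:singular} exactly as in the proof of \autoref{thSingularSolution} (with $\tau=t\sqrt\lambda$) shows that $u$ solves $u''(\tau)+\frac b\tau u'(\tau)+u(\tau)=0$ on $(0,\infty)$ with $u(0)=1$ and $u'(0)=0$; this initial value problem has a singular coefficient at $\tau=0$, but its solution $u$ is nonetheless smooth because $u(\tau)=v(\tau^2)$ with $v$ the entire function appearing in the power series of the Bessel function.

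First I would prove the pointwise bound $|u(\tau)|\le1$ for $\tau\ge0$, with strict inequality for $\tau>0$, by a Lyapunov argument. Multiplying the differential equation by $u'$ gives, for the energy $E(\tau):=\tfrac12((u'(\tau))^2+u(\tau)^2)$, the identity $\tfrac{d}{d\tau}E(\tau)=-\tfrac b\tau(u'(\tau))^2\le0$ on $(0,\infty)$; since $E$ is continuous on $[0,\infty)$, it is non‑increasing there, and $E(0)=\tfrac12$ (using $u'(0)=0$), so $u(\tau)^2\le2E(\tau)\le2E(0)=1$. If $|u(\tau_0)|=1$ for some $\tau_0>0$, then $E(\tau_0)\ge\tfrac12=E(0)$ together with monotonicity forces $E$ to be constant on $[0,\tau_0]$, hence $u'\equiv0$ and $u\equiv u(0)=1$ there, contradicting the differential equation; therefore $|u(\tau)|<1$ for all $\tau>0$.

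Next I would extract the algebraic decay. With $\kappa:=\tfrac12(b-1)$, the standard large‑argument asymptotics of the Bessel function, $J_\kappa(\tau)=\sqrt{2/(\pi\tau)}(\cos(\tau-\tfrac{\kappa\pi}2-\tfrac\pi4)+\mathcal O(\tau^{-1}))$ as $\tau\to\infty$ (see, e.g., \cite[Formula~9.2.1]{AbrSte72}), give a constant $C_\kappa>0$ with $|J_\kappa(\tau)|\le C_\kappa\tau^{-1/2}$ for $\tau\ge1$; plugging this into the definition of $u$ in \autoref{eq:singularSolution} and using $\kappa+\tfrac12=\tfrac b2$ yields $|u(\tau)|\le C'\tau^{-b/2}$ for $\tau\ge1$. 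For $\tau\in(0,1]$ the previous step gives $|u(\tau)|\le1\le\tau^{-b/2}$, so $|u(\tau)|\le C\tau^{-b/2}$ for all $\tau>0$ with $C:=\max\{C',1\}$.

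Finally I would set $U(\tau):=\sup_{\tau'\ge\tau}|u(\tau')|$. By construction $U$ is monotonically decreasing, takes values in $[0,1]$, and satisfies $U(\tau)\le\sup_{\tau'\ge\tau}C(\tau')^{-b/2}=C\tau^{-b/2}$. Since $|u|$ is continuous with $|u(\tau')|\to0$ as $\tau'\to\infty$ and $u$ does not vanish identically, the supremum defining $U(\tau)$ is attained at some point $\tau^*\ge\tau$, so for $\tau>0$ we get $U(\tau)=|u(\tau^*)|<1$ by the second step; and continuity of $U$ follows from its monotonicity together with the continuity of $|u|$, which rules out jump discontinuities on both sides. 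The one genuinely delicate point is the strict inequality $|u(\tau)|<1$ for $\tau>0$ — for this the energy identity above is the crux — while the remaining parts are routine manipulations with Bessel asymptotics and suprema of continuous functions.
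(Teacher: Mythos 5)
Your proposal is correct and follows essentially the same route as the paper: reduce to the one-variable function $u$, use the energy $E=u'^2+u^2$ with $E'(\tau)=-\frac b\tau u'^2(\tau)\le0$ to obtain $|u|\le1$ together with strictness for $\tau>0$, invoke the large-argument Bessel bound for the $\tau^{-\frac b2}$ decay, and combine these into a monotone envelope $U$. The only minor differences are that the paper gets strictness from \autoref{thStrictlyDecreasing} (so that $E$ strictly decreases up to the first Bessel zero) and assembles the envelope by hand, whereas you argue strictness by an energy-rigidity/ODE contradiction and define $U(\tau)=\sup_{\tau'\ge\tau}|u(\tau')|$; both variants are sound.
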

\begin{proof}
We use again the function $u$ defined in \autoref{eq:singularSolution} which satisfies $\tilde\rho(t;\lambda)=u(t\sqrt\lambda)$. 
Then, we remark that the energy
\[ E(\tau) \coloneqq {u'}^2(\tau)+u^2(\tau),\;\tau \ge 0, \]
fulfils (using \autoref{eq:singular} with $\lambda=1$, $t=\tau$ and $u(\tau)=\tilde\rho(\tau;1)$)
\[ E'(\tau) = 2u'(\tau)\left(u''(\tau)+u(\tau)\right) = -\frac b\tau{u'}^2(\tau) \le 0. \]
Since we know from \autoref{thStrictlyDecreasing} that $u'(\tau)=\partial_t\tilde\rho(\tau;1)<0$ for $\tau\in(0,j_{\frac12(b-1),1})$, we have 
that $E$ is strictly decreasing on $(0,j_{\frac12(b-1),1})$ so that $E(j_{\frac12(b-1),1}) < E(0)$. For $\tau\ge j_{\frac12(b-1),1}$, 
we can therefore estimate $u$ by
\[ u^2(\tau) \le E(\tau) \le E(j_{\frac12(b-1),1}) < E(0) = 1. \]
Thus, $u$ is monotonically decreasing on $(0,j_{\frac12(b-1),1})$ and uniformly bounded by $E(j_{\frac12(b-1),1})<1$ on $[j_{\frac12(b-1),1},\infty)$. 
Therefore, we can find a monotonically decreasing function $\tilde U\colon[0,\infty)\to[0,1]$ with
\[ u(\tau) \le \tilde U(\tau)<1\text{ for every }\tau>0. \]

Since it follows from \cite[Formula 9.2.1]{AbrSte72} that there exists a constant $c>0$ such that
\[ |J_{\frac12(b-1)}(\tau)| \le c\tau^{-\frac12}\text{ for all }\tau>0, \]
which implies according to \autoref{eq:singularSolution} with $C=2^{\frac12(b-1)}\Gamma(\frac12(b+1))c$ the upper bound
\[ |u(\tau)|\le C\tau^{-\frac b2}\text{ for all }\tau>0, \]
the function $U$ defined by $U(\tau)=\min\{\tilde U(\tau),C\tau^{-\frac b2}\}$ satisfies all the properties.
\end{proof}

To verify the condition in \autoref{deGenerator}~\ref{enGeneratorBounded} for $\ra$, we establish here the corresponding lower bound for the function $\tilde\rho$.

\begin{lemma}\label{thSingularLowerBound}
Let $\tilde\rho$ be the solution of \autoref{eq:singular} given by \autoref{eq:singularSolution}. Then, there exists a constant $\tau_b\in(0,j_{\frac12(b-1),1}]$ such that
\begin{equation} \label{eq:lb}
 \tilde\rho(t;\lambda)\ge 1-\frac{t\sqrt\lambda}{2\tau_b}\text{ for all }t\ge0,\;\lambda>0.
\end{equation}
\end{lemma}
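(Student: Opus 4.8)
The plan is to use the representation $\tilde\rho(t;\lambda)=u(t\sqrt\lambda)$ from \autoref{eq:singularSolution}, which turns \autoref{eq:lb} into the single-variable estimate $u(\tau)\ge 1-\frac{\tau}{2\tau_b}$ for all $\tau\ge 0$ (with $\tau=t\sqrt\lambda$). I would prove this by treating $\tau$ in a fixed neighbourhood of the origin and $\tau$ bounded away from it separately, and then choosing $\tau_b$ small enough that the two ranges overlap, so that together they cover all of $[0,\infty)$.

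For the behaviour near the origin I would use that $u$ is smooth on $[0,\infty)$ (it equals $v(\tau^2)$ with $v$ entire, cf.\ the corollary following \autoref{thSingularSolution}) with $u(0)=1$ and $u'(0)=0$. Setting $M:=\tfrac12\sup_{\tau\in[0,\,j_{\frac12(b-1),1}]}|u''(\tau)|<\infty$, Taylor's theorem with Lagrange remainder yields $u(\tau)\ge 1-M\tau^2$ on $[0,j_{\frac12(b-1),1}]$, hence $u(\tau)\ge 1-Mj_{\frac12(b-1),1}\,\tau$ there. (Alternatively one can avoid Taylor's theorem and use the product representation $u(\tau)=\prod_{\ell\ge1}(1-\tau^2/j_{\frac12(b-1),\ell}^2)$ established in the proof of \autoref{thStrictlyDecreasing}, together with the Weierstrass inequality $\prod_\ell(1-x_\ell)\ge1-\sum_\ell x_\ell$ for $x_\ell\in[0,1)$ and the convergence of $\sum_\ell j_{\frac12(b-1),\ell}^{-2}$, which gives the same kind of bound on $[0,j_{\frac12(b-1),1})$.) For $\tau$ away from the origin I would invoke the energy identity from the proof of \autoref{thSingularUpperBound}: $E(\tau)=u'(\tau)^2+u(\tau)^2$ is non-increasing, so $u(\tau)^2\le E(0)=1$ and therefore $u(\tau)\ge -1$ for every $\tau\ge0$; in particular $u(\tau)\ge -1\ge 1-\tfrac{\tau}{2\tau_b}$ whenever $\tau\ge 4\tau_b$.

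It then remains to combine the two estimates by choosing
\[ \tau_b:=\min\Bigl\{\tfrac14\,j_{\frac12(b-1),1},\ \tfrac1{2Mj_{\frac12(b-1),1}}\Bigr\}, \]
which lies in $(0,j_{\frac12(b-1),1}]$. With this choice one has $Mj_{\frac12(b-1),1}\le\tfrac1{2\tau_b}$, so the linear bound gives $u(\tau)\ge 1-\tfrac{\tau}{2\tau_b}$ on $[0,j_{\frac12(b-1),1}]$; and since $4\tau_b\le j_{\frac12(b-1),1}$, the second estimate gives $u(\tau)\ge -1\ge 1-\tfrac{\tau}{2\tau_b}$ on $[4\tau_b,\infty)\supseteq[j_{\frac12(b-1),1},\infty)$. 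As $[0,j_{\frac12(b-1),1}]\cup[4\tau_b,\infty)=[0,\infty)$, this proves $u(\tau)\ge 1-\tfrac{\tau}{2\tau_b}$ for all $\tau\ge0$, i.e.\ \autoref{eq:lb}. I do not expect a genuine obstacle here; the only care required is the bookkeeping of the constant $M$ and checking both that the chosen $\tau_b$ lies in $(0,j_{\frac12(b-1),1}]$ and that the two $\tau$-regimes overlap, which is routine.
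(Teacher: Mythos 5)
Your proof is correct and follows essentially the same route as the paper: reduce to $u(\tau)\ge 1-\frac{\tau}{2\tau_b}$, establish a linear lower bound near the origin from $u(0)=1$, $u'(0)=0$, use $u\ge-1$ (via the energy bound from \autoref{thSingularUpperBound}) for large $\tau$, and pick $\tau_b$ as a minimum so the two regimes overlap. The only difference is cosmetic: you make the near-origin constant explicit via Taylor's theorem (or the product formula), whereas the paper argues softly that for an arbitrary $c>0$ there is some $\bar\tau$ with $u(\tau)\ge1-c\tau$ on $[0,\bar\tau]$.
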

\begin{proof}
We define $u$ again by \autoref{eq:singularSolution} and choose some arbitrary $c>0$. Then, the initial conditions $u(0)=1$ and $u'(0)=0$ imply that we find a $\bar\tau>0$ such that $u(\tau)\ge 1-c\tau$ for all $\tau\in[0,\bar\tau]$. Setting now $\tau_b\coloneqq\min\{\frac1{2c},\frac{\bar\tau}4,j_{\frac12(b-1),1}\}$, we have by construction 
\[ u(\tau)\ge1-c\tau\ge 1-\frac\tau{2\tau_b}\text{ for all }\tau\in[0,\bar\tau]. \]
Moreover, the uniform bound $\abs{u(\tau)}<1$ for all $\tau>0$, shown in \autoref{thSingularUpperBound}, implies that
\[ u(\tau) > -1 \ge 1-\frac2{\bar\tau}\tau \ge 1-\frac\tau{2\tau_b}\text{ for all }\tau\in[\bar\tau,\infty). \]
Thus, $\tilde\rho(t;\lambda)=u(t\sqrt\lambda)$ yields the claim.
\end{proof}

These estimates for $\tilde\rho$ suffice now to show that the functions $\ra$ defined by \autoref{eqSingRegMeth} generate the regularisation method corresponding to the solution $\xi$ of \autoref{eqODEVanishingViscosity}.

\begin{proposition}\label{thSingReg}
Let $\tilde\rho$ be the solution of \autoref{eq:singular} given by \autoref{eq:singularSolution}, $\tau_b$ be the constant defined in \autoref{thSingularLowerBound}, and set
\begin{equation}\label{eqSingRegMeth}
\ra(\lambda) \coloneqq \frac1\lambda\left(1-\tilde\rho\left(\frac{\tau_b}{\sqrt{\alpha}};\lambda\right)\right).
\end{equation}

Then, $(\ra)_{\alpha>0}$ generates a regularisation method in the sense of \autoref{deGenerator}.
\end{proposition}

\begin{proof}
We verify the four conditions in \autoref{deGenerator}.
\begin{enumerate}
\item We know from \autoref{thSingularUpperBound} that $|\tilde\rho|\le1$, and thus it follows that
\[ \ra(\lambda) \le \frac2\lambda. \]
Moreover, it follows from \autoref{eq:lb} that
\[ \ra(\lambda) = \frac1\lambda\left(1-\tilde\rho\left(\frac{\tau_b}{\sqrt{\alpha}};\lambda\right)\right) \le \frac1{2\sqrt{\alpha\lambda}}. \]
\item
The error function $\tilde r_\alpha$ corresponding to the generator $r_\alpha$ is given by
\[ \tra(\lambda) = \tilde\rho\left(\frac{\tau_b}{\sqrt{\alpha}};\lambda\right), \]
which is a monotonically decreasing function on $(0,\frac1{\tau_b^2}j_{\frac12(b-1),1}^2\alpha)$ according to \autoref{thStrictlyDecreasing}. Since we have chosen $\tau_b\in(0,j_{\frac12(b-1),1}]$, see \autoref{thSingularLowerBound}, this in particular shows that $\tra$ is monotonically decreasing on $(0,\alpha)$.
\item
Let $U$ be the function constructed in \autoref{thSingularUpperBound}. We define
\begin{equation}\label{eqSingRegMethEnv}
\Tra(\lambda) \coloneqq U\left(\tau_b\sqrt{\frac\lambda\alpha}\right).
\end{equation}
Then, we have by \autoref{thSingularUpperBound} that $\lambda\mapsto\Tra(\lambda)$ is monotonically decreasing, $\alpha\mapsto\Tra(\lambda)$ is continuous and monotonically increasing and~$\Tra$ fulfils
\[ \tra(\lambda) = \tilde\rho\left(\frac{\tau_b}{\sqrt{\alpha}};\lambda\right) \le U\left(\tau_b\sqrt{\frac\lambda\alpha}\right) = \Tra(\lambda). \]
\item
We have again by \autoref{thSingularUpperBound} that
\[ \Tra(\alpha) = U(\tau_b) < 1\text{ for all }\alpha>0. \]
\end{enumerate}
\end{proof}

As before, we also verify that the classical convergence rate functions $\phiH_\mu$ and $\phiL_\mu$ are compatible with the regularisation method $(\ra)_{\alpha>0}$. In contrast to Showalter's method and the heavy ball method, the compatibility for $\phiH_\mu$ only holds up to a certain saturation value for the parameter $\mu$.

\begin{lemma}\label{thSingularCompatibility}
The functions $\phiH_\mu$ for all $\mu\in(0,\frac b2)$ and the functions $\phiL_\mu$ for all $\mu>0$, as defined in \autoref{exCR}, are compatible with the regularisation method $(r_\alpha)_{\alpha>0}$ defined by \autoref{eqSingRegMeth} in the sense of \autoref{deCompatible}.
\end{lemma}
\begin{proof}
As before, it is because of \autoref{thTransComp} enough to check this for the functions $\phiH_\mu$, $\mu\in(0,\frac b2)$.
The function $\Tra$ defined in \autoref{eqSingRegMethEnv} fulfils according to \autoref{thSingularUpperBound} that there exists a constant $C>0$ with
\[ \Tra^2(\lambda) = U^2\left(\tau_b\sqrt{\frac\lambda\alpha}\right) \le C^2\tau_b^{-b}\left(\frac\lambda\alpha\right)^{-\frac b2} \le C^2\tau_b^{-b}\left(\frac{\phiH_\mu(\lambda)}{\phiH_\mu(\alpha)}\right)^{-\frac b{2\mu}}, \]
which is \autoref{eqSourceConditionTail} with the compatibility function $F_\mu(z)=C^2\tau_b^{-b}z^{-\frac b{2\mu}}$. It remains to check that $F_\mu\colon[1,\infty)\to\R$ is integrable, which is the case for $\mu<\frac b2$.
\end{proof}

We can therefore apply \autoref{thCR} to the regularisation method generated by the functions $(r_\alpha)_{\alpha>0}$ defined in \autoref{eqSingRegMeth} and the convergence rates $\phiH_\mu$, $\mu\in(0,\frac b2)$, and $\phiL_\mu$, $\mu>0$. By using that we have by construction $\xa(\tilde y)=\xi(\frac{\tau_b}{\sqrt\alpha};\tilde y)$, see \autoref{eqSingularRegSol} below, this gives us equivalent characterisations for convergence rates of the flow $\xi$ of \autoref{eqODEVanishingViscosity}.
As before for Showalter's method and the heavy ball method, we formulate the resulting convergence rates under the stronger, but more commonly used standard source condition, see \autoref{thSsc}.

\begin{corollary}\label{thSingularConclusion}
Let $y\in\mathcal R(L)$ be given such that the corresponding minimum norm solution $x^\dag\in \mathcal X$, fulfilling $L x^\dag=y$ and $\|x^\dag\|=\inf\{\norm{x}\mid L x=y\}$, satisfies for some $\mu\in(0,\frac b2)$ the source condition
\[ \xdag\in\mathcal R\big((L^*L)^{\frac\mu2}\big). \]

Then, if $\xi$ is the solution of the initial value problem in \autoref{eqODEVanishingViscosity},
\begin{enumerate}
\item
there exists a constant $C_1>0$ such that
\[ \norm{\xi(t;y)-x^\dag}^2 \le\frac{C_1}{t^{2\mu}}\text{ for all }t>0; \]
\item
there exists a constant $C_2>0$ such that
\[ \inf_{t>0}\norm{\xi(t;\tilde y)-x^\dag}^2 \le C_2\norm{\tilde y-y}^{\frac{2\mu}{\mu+1}}\text{ for all }\tilde y\in \mathcal Y; \]
and
\item
if $\mu<\frac b2-1$, there exists a constant $C_3>0$ such that
\[ \norm{L\xi(t;y)-y}^2 \le \frac{C_3}{t^{2(\mu+1)}}\text{ for all }t>0. \]
\end{enumerate}
\end{corollary}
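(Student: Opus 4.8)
The plan is to follow exactly the blueprint already used in the proofs of \autoref{thShoCR} and \autoref{thSecOrdConclusion}, namely to transport the source condition to a convergence rate function $\varphi$, invoke \autoref{thSsc} and \autoref{thCR}, and then read off the three statements. First I would record the scaling between the regularised solution and the flow: from \autoref{eqSingRegMeth} and the spectral representation of $\xi$ we have
\begin{equation}\label{eqSingularRegSol}
\xa(\tilde y) = \ra(L^*L)L^*\tilde y = \int_{(0,\norm{L}^2]}\frac{1-\tilde\rho(\tfrac{\tau_b}{\sqrt\alpha};\lambda)}\lambda\d\mathbf E_\lambda L^*\tilde y = \xi\!\left(\tfrac{\tau_b}{\sqrt\alpha};\tilde y\right),
\end{equation}
so that $\alpha = \tau_b^2/t^2$ is the correspondence between regularisation parameter and time; in particular $\alpha\to0$ corresponds to $t\to\infty$ and $\alpha^\mu = \tau_b^{2\mu}t^{-2\mu}$.

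For part (i): the source condition $\xdag\in\mathcal R((L^*L)^{\mu/2})$ is the standard source condition with $\varphi=\phi_\mu$, which is continuous and monotonically increasing. By \autoref{thSsc} it implies the variational source condition of \autoref{thCR}~\ref{enCRvar} (for any $\eta\in(0,1)$), and since $\mu\in(0,\tfrac b2)$, \autoref{thSingularCompatibility} guarantees $\phi_\mu$ is compatible with $(\ra)_{\alpha>0}$, so \autoref{thCR} applies. The equivalence \ref{enCRvar}$\iff$\ref{enCRd} yields a constant $C_d$ with $d(\alpha)\le C_d\alpha^\mu$. Using \autoref{eqSingularRegSol} and the definition of $d$ in \autoref{eq:dD}, $\norm{\xi(t;y)-\xdag}^2 = d(\tau_b^2/t^2) \le C_d\tau_b^{2\mu}t^{-2\mu}$, giving (i) with $C_1 = C_d\tau_b^{2\mu}$.

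For part (ii): by \autoref{thConvRatesSub}, $\phi_\mu$ is $G$-subhomogeneous with $G(\gamma)=\gamma^\mu$, so the second block of \autoref{thCR} applies and \ref{enCRd}$\implies$\ref{enCRtd} gives $\tilde d(\delta)\le C_{\tilde d}\Phi[\phi_\mu](\delta)$. One computes $\Phi[\phi_\mu](\delta)=\delta^{2\mu/(\mu+1)}$ directly from \autoref{deTransform} (since $\hat\phi_\mu(\alpha)=\alpha^{(\mu+1)/2}$, its inverse is $\delta^{2/(\mu+1)}$). Since $\inf_{t>0}\norm{\xi(t;\tilde y)-\xdag}^2 = \inf_{\alpha>0}\norm{\xa(\tilde y)-\xdag}^2 \le \tilde d(\norm{\tilde y-y})$ by \autoref{eqSingularRegSol} and the definition of $\tilde d$, statement (ii) follows with $C_2=C_{\tilde d}$.

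For part (iii): from part (i)'s application of \autoref{thCR}, \ref{enCRe} holds, i.e.\ $e(\lambda)\le C_e\phi_\mu(\lambda)$, hence $\lambda e(\lambda)\le C_e\phi_{\mu+1}(\lambda)$. The extra hypothesis $\mu<\tfrac b2-1$ ensures $\mu+1<\tfrac b2$, so by \autoref{thSingularCompatibility} the function $\phi_{\mu+1}$ is also compatible with $(\ra)_{\alpha>0}$; this is precisely the saturation restriction and the reason (iii) needs a stronger assumption than (i) and (ii). Then \autoref{thCRImageSimple} with $\bar\varphi=\phi_{\mu+1}$ yields $q(\alpha)\le C\phi_{\mu+1}(\alpha)=C\alpha^{\mu+1}$, and using \autoref{eqSingularRegSol} and the definition of $q$ in \autoref{eqQ}, $\norm{L\xi(t;y)-y}^2 = q(\tau_b^2/t^2)\le C\tau_b^{2(\mu+1)}t^{-2(\mu+1)}$, giving (iii) with $C_3=C\tau_b^{2(\mu+1)}$. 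The only genuinely delicate point is the bookkeeping of the saturation threshold: one must check that $\mu<\tfrac b2-1$ (not merely $\mu<\tfrac b2$) is exactly what the compatibility of $\phi_{\mu+1}$ needs, which is immediate from \autoref{thSingularCompatibility}; everything else is a transcription of the Showalter and heavy-ball arguments with the different scaling $\alpha=\tau_b^2/t^2$, which does not affect the orders of the rates.
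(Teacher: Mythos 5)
Your proposal is correct and follows essentially the same route as the paper: the paper's proof also records the scaling $\xa(\tilde y)=\xi(\tau_b/\sqrt\alpha;\tilde y)$, i.e.\ $\alpha=\tau_b^2/t^2$, and then transcribes the Showalter argument (\autoref{thSsc}, \autoref{thCR} with $\varphi=\phi_\mu$, and \autoref{thCRImageSimple} with $\phi_{\mu+1}$, whose compatibility is exactly where $\mu<\tfrac b2-1$ enters). The only cosmetic difference is that you spell out the intermediate equivalences of \autoref{thCR} and the computation of $\Phi[\phi_\mu]$, which the paper delegates to the proof of \autoref{thShoCR}.
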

\begin{proof}
The proof follows exactly the lines of the proof of \autoref{thShoCR}, where the compatibility of~$\phiH_\mu$ is shown in \autoref{thSingularCompatibility} and we have the different scaling
\begin{equation}\label{eqSingularRegSol}
\xa(\tilde y)=\ra(L^*L)L^*\tilde y=\int_{(0,\norm{L}^2]}\frac1\lambda\left(1-\tilde\rho\left(\frac{\tau_b}{\sqrt\alpha};\lambda\right)\right)\d\mathbf E_\lambda L^*\tilde y = \xi\left(\frac{\tau_b}{\sqrt\alpha};\tilde y\right)
\end{equation}
between the regularised solution $\xa$, defined in \autoref{eq:reg} with the regularisation method $(\ra)_{\alpha>0}$ from \autoref{eqSingRegMeth}, and the solutions $\xi$ of \autoref{eqODEVanishingViscosity} and $\tilde\rho$ of \autoref{eq:singular}. 
Following \autoref{thShoCR} and using the notation $d$ from \autoref{eq:dD} and 
$\tilde d$ from \autoref{eq:tilde_dD} we get
\begin{enumerate}
\item in the case of exact data the convergence rates
\[ \norm{\xi(t;y)-x^\dag}^2 = \norm{x_{\tau_b^2t^{-2}}(y)-x^\dag}^2 = d\left(\frac{\tau_b^2}{t^2}\right) \text{ and } \norm{x_{\tau_b^2t^{-2}}(y)-x^\dag}^2
\le \frac{C_d\tau_b^{2\mu}}{t^{2\mu}}.\]
\item For perturbed data we get the convergence rate
\[ \inf_{t>0}\norm{\xi(t;\tilde y)-x^\dag}^2 = \inf_{\alpha>0}\norm{\xi\left(\frac{\tau_b}{\sqrt\alpha};\tilde y\right)-x^\dag}^2\le \tilde d(\norm{\tilde y-y}) \le C_{\tilde d}\norm{\tilde y-y}^{\frac{2\mu}{\mu+1}}. \]
\item Moreover, using that for $\mu<\frac b2-1$ also $\phiH_{\mu+1}$ is compatible with $(\ra)_{\alpha>0}$, we get from \autoref{thCRImageSimple} the convergence rate
\[ \norm{L\xi(t;y)-y}^2 = \norm{L x_{\tau_b^2t^{-2}}(y)-y}^2 = q\left(\frac{\tau_b^2}{t^2}\right) \le C\tau_b^{2(\mu+1)}t^{-2(\mu+1)} \]
for the noise free residual error, where $q$ is defined in \autoref{eqQ}.
\end{enumerate}
\end{proof}

\medskip
We end this section by a few remarks.
\begin{remark}[Comparison of Flows]
Comparing the results in \autoref{thShoCR}, \autoref{thSecOrdConclusion}, and \autoref{thSingularConclusion}, we see that the three methods we have analysed, namely Showalter's method, the heavy ball dynamics, and the vanishing viscosity flow, all give the same rate of convergence for noisy data with optimal stopping time. However, one should notice that their optimal stopping times are different. This is due to the acceleration property of the vanishing viscosity flow in comparison with the other two, which has been analysed in the literature.
\end{remark}

\begin{remark}[Saturation of Viscosity Flow]
The vanishing viscosity flow suffers from a saturation effect for the convergence rate functions $\phiH_\mu$ allowing only convergence rates up to certain values of $\mu$, which is not the case in the other two methods (because of their exponential decay of the error function at every fixed spectral value).
\end{remark}

\begin{remark}[Comparison with literature]
\autoref{eq:singular} has been investigated quite heavily in a more general context of non-smooth, convex functionals $\mathcal J$ and abstract ordinary differential equations 
of the form
\begin{equation} \label{eqODE_general}
\begin{aligned}
\xi'' (t) + \frac{b}{t} \xi'(t) + \partial \mathcal{J}(\xi(t)) &\ni 0 \text{  for all } t \in \ointerval{\infty}, \\
\xi'(0) &= 0, \\
\xi(0) &= 0,
\end{aligned}
\end{equation}
see for instance \cite{SuBoyCan16,AttChbPeyRed18,AttChbRia18,AujDosRon19,ApiAujDos18}.
\autoref{eqODE_general} corresponds to \autoref{eqODE} with $N=2$ and $a_1(t)=\frac b t$, $b>0$, for the particular energy functional
$\mathcal{J}(x)=\frac{1}{2}\norm{L x-y}^2$.

The authors prove optimality of \autoref{eqODE_general}, which, however, is a different term than in our paper:
\begin{enumerate}
\item In the above referenced paper optimality is considered with respect to all possible smooth and convex functionals $\mathcal{J}$, 
while in our work optimality is considered with respect to all possible variations of $y$ only. The papers \cite{SuBoyCan16,AujDosRon19,ApiAujDos18} consider a finite dimensional setting where 
$\mathcal{J}$ maps a subset of a finite dimensional space $\R^d$ into the extended reals.

\item The second difference in the optimality results is that we consider primarily optimal convergence rate of $\xi(t)-\xdag$ for $t \to \infty$
and not of $\mathcal{J}(\xi(t)) \to \min_{x\in \mathcal X}\mathcal J(x)$, that is, we are considering rates in the domain of $L$, while in the referenced papers convergence in the image domain is considered. Consequently, we get rates for the residual squared (which 
is the rate of $J(\xi(t))$ in the referenced papers), which are based on optimal rates (in the sense of this paper) for $\xi(t)-\xdag \to 0$. The presented rates in the image domain are, however, not necessarily optimal.
\end{enumerate}

Nevertheless, it is very interesting to note that the two cases $b \geq 3$ and $0 < b < 3$, referred to as heavy and low friction cases, do not result in a different analysis in our paper, compared to, for instance, \cite{ApiAujDos18}. This is of course not a contradiction, because we consider a different optimality terminology.
\end{remark}

\section*{Conclusions}
The paper shows that the dynamical flows provide optimal regularisation methods (in the sense explained in \autoref{se:review}).
We proved optimal convergence rates of the solutions of the flows to the minimum norm solution for $t\to \infty$ and we also 
provide convergence rates of the residuals of the regularised solutions.

We observed that the vanishing viscosity method, heavy ball dynamics, and Showalter's method provide optimal 
reconstructions for different times. In fact, eventually, for a fair numerical comparison of the results of all three 
methods one should compare the results of Showalter's method and the heavy ball dynamics, respectively, at time $t_0^2$ 
with the vanishing viscosity flow at time $t_0$.

\subsection*{Acknowledgements}
\begin{itemize}
\item
RB acknowledges support from the Austrian Science Fund (FWF) within the project I2419-N32 (Employing Recent Outcomes in Proximal Theory Outside the Comfort Zone).
\item
GD is funded by the Deutsche Forschungsgemeinschaft (DFG, German Research Foundation) under Germany's Excellence Strategy ``The Berlin Mathematics Research Center MATH'' (EXC-2046/1, project ID: 390685689).
\item
PE and OS are supported by the Austrian Science Fund (FWF), with SFB F68, project F6804-N36 (Quantitative Coupled Physics Imaging) and project F6807-N36 (Tomography with Uncertainties).
\item
OS acknowledges support from the Austrian Science Fund (FWF) within the national research network Geometry and Simulation, 
project S11704 (Variational Methods for Imaging on Manifolds) and I3661-N27 (Novel Error Measures and Source 
Conditions of Regularization Methods for Inverse Problems).
\end{itemize}

\section*{References}
\printbibliography[heading=none]

\end{document}